
\documentclass[a4paper,10pt]{amsart}


\usepackage{amsmath}
\usepackage{amsthm}
\usepackage{amssymb}
\usepackage{amsfonts}
\usepackage{bbm}
\usepackage{mathrsfs}
\usepackage{enumitem} 
\usepackage{xcolor}
\usepackage[bookmarks=true,hyperindex,pdftex,colorlinks,citecolor=blue,linkcolor=cyan]{hyperref}
\usepackage{marginnote} 
\usepackage{fancyhdr}
\usepackage{soul}



\DeclareMathOperator{\dist}{dist}                           
\DeclareMathOperator{\lspan}{span}                          
\DeclareMathOperator{\conv}{conv}                           
\DeclareMathOperator{\supp}{supp}                           
\DeclareMathOperator{\rad}{rad}                             
\DeclareMathOperator{\Lip}{Lip}                             
\DeclareMathOperator{\aspan}{span}

\newcommand{\N}{\mathbb{N}}   
\newcommand{\NN}{\mathbb{N}}            
\newcommand{\ZZ}{\mathbb{Z}}             
\newcommand{\RR}{\mathbb{R}}             

\newcommand{\abs}[1]{\left|{#1}\right|}                     
\newcommand{\pare}[1]{\left({#1}\right)}                    
\newcommand{\set}[1]{\left\{{#1}\right\}}                   
\newcommand{\norm}[1]{\left\|{#1}\right\|}                  
\newcommand{\duality}[1]{\left<{#1}\right>}                 
\newcommand{\cl}[1]{\overline{#1}}                          
\newcommand{\restrict}{\mathord{\upharpoonright}}           

\newcommand{\lipfree}[1]{\mathcal{F}({#1})}                 
\newcommand{\meas}[1]{\mathcal{M}({#1})}                    
\newcommand{\free}[1]{\ensuremath{\mathcal{F}({#1})}}

\newcommand{\ip}[2]{\ensuremath{\left\langle{#1},{#2}\right\rangle}}
\newcommand{\n}[1]{\norm{#1}}
\newcommand{\enorm}[2]{\norm{#1}_{\ell_2^{#2}}}

\renewcommand{\leq}{\leqslant}
\renewcommand{\geq}{\geqslant}


\theoremstyle{plain}
\newtheorem{theorem}{Theorem}[section]
\newtheorem{lemma}[theorem]{Lemma}
\newtheorem{corollary}[theorem]{Corollary}
\newtheorem{proposition}[theorem]{Proposition}

\newtheorem*{claim*}{Claim}
\newtheorem{fact}[theorem]{Fact}
\newtheorem{question}{Question}

\newtheorem{maintheorem}{Theorem}


\newenvironment{customtheorem}[1]
  {\innercustomthm}
  {\endinnercustomthm}

\theoremstyle{definition}
\newtheorem*{definition*}{Definition}
\newtheorem{definition}[theorem]{Definition}

\theoremstyle{remark}
\newtheorem{remark}[theorem]{Remark}


\begin{document}
\title{Pe\l czy\'nski's property (V$^*$) in Lipschitz-free spaces}

\author[R. J. Aliaga]{Ram\'on J. Aliaga}
\address[R. J. Aliaga]{Instituto Universitario de Matem\'atica Pura y Aplicada,
Universitat Polit\`ecnica de Val\`encia,
Camino de Vera S/N,
46022 Valencia, Spain}
\email{raalva@upv.es}

\author[E. Perneck\'a]{Eva Perneck\'a}
\address[E. Perneck\'a]{Faculty of Information Technology, Czech Technical University in Prague, Th\'akurova 9, 160 00, Prague 6, Czech Republic}
\email{perneeva@fit.cvut.cz}

\author[A. Quero]{Alicia Quero}
\address[A. Quero]{\textit{Present address:} Facultad de Ciencias Matem\'aticas, Universitat de Val\`encia, Doctor Moliner 50, 46100 Burjasot (Valencia), Spain
\newline
Faculty of Information Technology, Czech Technical University in Prague, Th\'akurova 9, 160 00, Prague 6, Czech Republic
}
\email{alicia.quero@uv.es}

\begin{abstract}
We prove that Pe\l czy\'nski's property (V$^*$) is locally determined for Lipschitz-free spaces, and obtain several sufficient conditions for it to hold. We deduce that $\mathcal{F}(M)$ has property (V$^*$) when the complete metric space $M$ is locally compact and purely 1-unrectifiable, a Hilbert space, or belongs to a class of Carnot-Carath\'eodory spaces satisfying a bi-H\"older condition, including Carnot groups.
\end{abstract}

\subjclass[2020]{Primary 46B03, 46B50; Secondary 53C17}

\keywords{Lipschitz-free space, property (V*), weak sequential completeness, Carnot group, Carnot-Carath\'eodory metric}

\maketitle

\section{Introduction}

This paper deals with the isomorphic theory of Lipschitz-free Banach spaces. Given a metric space $M$, its Lipschitz-free space $\lipfree{M}$ is a canonical linearization of $M$: a Banach space built around $M$ in such a way that $M$ is a linearly independent and linearly dense subset of $\lipfree{M}$, and any Lipschitz map $f:M\to N$ between metric spaces can be extended into an operator $\widehat{f}:\lipfree{M}\to\lipfree{N}$. See Section \ref{sec:lipfree} for a more detailed definition. The class of Lipschitz-free spaces is currently among the most studied within Banach space theory.

Properties (V) and (V$^*$), dual to each other, were introduced by Pe\l czy\'nski in \cite{Pel62}. For a Banach space $X$, property (V$^*$) relates weak compactness in $X$ with the convergence of weakly unconditionally Cauchy series in $X^*$; for property (V), the roles of $X$ and $X^*$ are reversed. Again, precise definitions will follow. On the other hand, $X$ is said to be \emph{weakly sequentially complete} if every weakly Cauchy sequence in $X$ is weakly convergent; recall that a sequence $(x_n)_n$ is \emph{weakly Cauchy} if the sequence $(\duality{x^*,x_n})_n$ is Cauchy (i.e. convergent) for every $x^*\in X^*$.

Our main motivation is the study of the following question.

\begin{question}
\label{q:equiv v* wsc c0}
Are any of the following properties equivalent for Lipschitz-free spaces?
\begin{enumerate}[label={\upshape{(\roman*)}}]
\item property (V$^*$),
\item weak sequential completeness,
\item not containing (isomorphic copies of) $c_0$.
\end{enumerate}
\end{question}

The implications (i)$\Rightarrow$(ii)$\Rightarrow$(iii) hold for all Banach spaces, and neither of them can be reversed in general. Indeed, James space witnesses (iii)$\not\Rightarrow$(ii), while Bourgain and Delbaen constructed in \cite{BourgainDelbaen} a space satisfying (ii) but not (i). However, properties (i)-(iii) are known to be equivalent for certain classes of Banach spaces with additional structure, such as Banach lattices \cite{SaabSaab} or, more generally, complemented subspaces of Banach lattices \cite{Bombal}. No counterexample to either reverse implication is currently known within the class of Lipschitz-free spaces. On top of that, Lipschitz-free spaces already have a history of providing unexpected equivalences between prominent Banach space properties. For instance, the equivalence of the Schur property, the Radon-Nikod\'ym property, and not containing $L_1$ is established in \cite{AGPP}. See also \cite{AGP}, where it is shown that many classical Banach space properties are equivalent to separability. In light of all of this, Question \ref{q:equiv v* wsc c0} is a very natural one.

Here, we focus on property (V$^*$) and the possible equivalence (i)$\Leftrightarrow$(ii). We will now describe the partial results obtained in this paper.
It is shown in \cite{ANPP} that certain Banach space properties are \emph{compactly determined} for Lipschitz-free spaces, meaning that they hold for $\lipfree{M}$ as soon as they hold for $\lipfree{K}$ for each compact $K\subset M$. Compactly determined properties include weak sequential completeness \cite[Corollary 2.4]{ANPP} and not containing $c_0$ \cite[Theorem 2.11]{ANPP}. Thus, if property (V$^*$) really is equivalent to weak sequential completeness, then the following question should have a positive answer.

\begin{question}
Is property (V$^*$) compactly determined for Lipschitz-free spaces?
\end{question}

We do not know the full answer to this question, but we provide several partial answers. First, we prove that property (V$^*$) is subject to a weaker determinacy principle that we call \emph{local determination}. Namely, $\lipfree{M}$ has property (V$^*$) if every point $x\in M$ has a neighborhood $U$ such that $\lipfree{U}$ has property (V$^*$).

\begin{maintheorem}
\label{mth:locally determined}
Property (V$^*$) is locally determined for Lipschitz-free spaces.
\end{maintheorem}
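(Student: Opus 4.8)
The plan is to argue by contradiction. Recall that a Banach space has property (V$^*$) exactly when each of its $(V^*)$-subsets is relatively weakly compact. Assume, then, that every $x\in M$ admits a neighborhood $U_x$ with $\mathcal{F}(U_x)$ having property (V$^*$), while $\mathcal{F}(M)$ carries a $(V^*)$-set $A$ that is not relatively weakly compact; by Eberlein--\v{S}mulian fix $(\mu_n)$ in $A$ with no weakly convergent subsequence, and note $\inf_n\norm{\mu_n}>0$ (otherwise $(\mu_n)$ is weakly null). I will use two preliminary facts. First, property (V$^*$) passes to free spaces over subsets: if $S\subseteq N$ and $\mathcal{F}(N)$ has property (V$^*$), then so does $\mathcal{F}(S)$, because the restriction map $\Lip_0(N)\to\Lip_0(S)$ is a norm-one linear operator --- hence preserves weakly unconditionally Cauchy series --- and coincides, under the canonical identifications, with the adjoint of the isometric inclusion $\mathcal{F}(S)\hookrightarrow\mathcal{F}(N)$, so $(V^*)$-sets of $\mathcal{F}(S)$ are $(V^*)$-sets of $\mathcal{F}(N)$ and therefore relatively weakly compact. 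Consequently the local hypothesis passes to the completion of $M$ (and to smaller neighborhoods), and since $\mathcal{F}(M)=\mathcal{F}(\overline M)$ we may assume $M$ complete; since finitely supported elements are dense and the classes of $(V^*)$-sets and of sequences with no weakly convergent subsequence persist under norm-null perturbations, we may also assume each $\mu_n$ finitely supported, so $N:=\overline{\bigcup_n\supp\mu_n}$ is separable.

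The second fact is the existence of \emph{localization operators}. Given a Lipschitz $\phi\colon M\to[0,1]$ whose support is bounded, contained in some $U_x$, with $\dist(\supp\phi,M\setminus U_x)>0$, the map $g\mapsto\phi g$ (extended by $0$) is a bounded \emph{linear} operator $\Lip_0(U_x)\to\Lip_0(M)$; it is weak$^*$-continuous, hence the adjoint of a bounded operator $w_\phi\colon\mathcal{F}(M)\to\mathcal{F}(U_x)$, determined on generators by $\delta(y)\mapsto\phi(y)\delta(y)$. Since the adjoint of $w_\phi$ is linear it sends weakly unconditionally Cauchy series in $\Lip_0(U_x)$ to such series in $\Lip_0(M)$, so $w_\phi(A)$ is a $(V^*)$-set of $\mathcal{F}(U_x)$; as $\mathcal{F}(U_x)$ has property (V$^*$), $w_\phi(A)$ is relatively weakly compact.

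Fix a locally finite Lipschitz partition of unity $(\rho_k)_{k\in\NN}$ on $N$ subordinate to a cover by balls, each of which sits, with a little room to spare, inside a neighborhood $U_{x_k}$; extend each $\rho_k$ to a bump function on $M$ supported in $U_{x_k}$. Each $w_{\rho_k}(A)$ is relatively weakly compact, so a diagonal argument produces a subsequence $(\mu_{n_j})_j$ with $w_{\rho_k}(\mu_{n_j})\to\eta_k$ weakly as $j\to\infty$, for every $k$; moreover $\sum_k w_{\rho_k}(\mu_{n_j})=\mu_{n_j}$ for each $j$, since $\mu_{n_j}$ is finitely supported in $N$. Now two cases. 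If, after a further subsequence, all $\supp\mu_{n_j}$ lie in one set meeting only finitely many $\supp\rho_k$, then $\mu_{n_j}=\sum_k w_{\rho_k}(\mu_{n_j})$ is a finite sum of weakly convergent sequences, hence $(\mu_{n_j})$ converges weakly, contradicting the choice of $(\mu_n)$. Otherwise the supports genuinely spread out; one then extracts a further subsequence together with Lipschitz bumps $\psi_l$ of uniformly bounded support and Lipschitz constant, supported on regions of $M$ that are pairwise uniformly separated or else placed at pairwise distinct dyadic scales about an accumulation point, such that $\norm{w_{\psi_l}(\mu_{n_{j_l}})}\geq\delta>0$. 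Picking a norm-one $g_l\in\Lip_0(\supp\psi_l)$ with $\duality{g_l,w_{\psi_l}(\mu_{n_{j_l}})}\geq\delta/2$ and setting $G_l:=\psi_l g_l$ (extended by $0$), one checks that $\sum_l G_l$ is weakly unconditionally Cauchy in $\Lip_0(M)$ and $\duality{G_l,\mu_{n_{j_l}}}\geq\delta/2$ with $\mu_{n_{j_l}}\in A$, contradicting that $A$ is a $(V^*)$-set. In both cases we reach a contradiction, so $\mathcal{F}(M)$ has property (V$^*$).

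The principal obstacle is the second case: organizing the geometric extraction so that the bump functions are placed with uniformly controlled sizes and Lipschitz constants while the localized masses $\norm{w_{\psi_l}(\mu_{n_{j_l}})}$ remain bounded below, and treating the ``escape to infinity'' and ``accumulation at a point'' regimes within a single weakly unconditionally Cauchy series. This is where completeness of $M$ and the finite-support reduction are used, and where the argument adapts the separation techniques developed for earlier determinacy results for Lipschitz-free spaces.
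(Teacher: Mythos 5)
Your overall strategy---localizing a putative bad (V$^*$)-set with weighting operators, using the hypothesis on each patch, and deriving a contradiction from a WUC series of disjointly supported bumps when the mass fails to concentrate---is in the right spirit and correctly identifies the weighting-operator machinery (the paper also uses $W_h^*$ to transfer (V$^*$)-sets into $\lipfree{\cl{U}}$). But the argument is incomplete at exactly the point you flag as ``the principal obstacle'', and that point is not a technicality: it is the entire content of the theorem. Your dichotomy in the last step is not exhaustive. The negation of ``after passing to a subsequence, all supports meet only finitely many patches'' is not ``one can extract bumps $\psi_l$ with pairwise separated supports, uniformly bounded Lipschitz constants and radii, and $\norm{w_{\psi_l}(\mu_{n_{j_l}})}\geq\delta$''. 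The mass of the $\mu_{n_j}$ can diffuse over infinitely many patches with no single bump retaining mass $\delta$, and bumps placed ``at pairwise distinct dyadic scales about an accumulation point'' have Lipschitz constants that blow up, so that after normalization the lower bound on the evaluations $\duality{G_l,\mu_{n_{j_l}}}$ is lost. What is actually needed is that every (V$^*$)-set is \emph{tight}: for each $\varepsilon>0$ it lies within $\varepsilon$ of $\lipfree{K}$ for some compact $K$. The paper proves this first (Theorem \ref{th:v*_tight}) by a Kalton-type iteration: one truncates radially to reduce to bounded $M$, then, assuming failure of tightness relative to $\delta$-neighborhoods $[E_n]_\delta$ of finite sets, one uses Hahn--Banach together with the \emph{smallest positive $(4R/\delta)$-Lipschitz extension} from the finite supports, which forces the witnessing functions to have disjoint supports and a \emph{fixed} Lipschitz bound $4R/\delta$; only then is their sum WUC by \cite[Lemma 6]{AP_normal} while each term still sees mass $\geq\varepsilon/20$. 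Your sketch contains none of these quantitative ingredients, and without them the WUC claim for $\sum_l G_l$ and the lower bound on $\duality{G_l,\mu_{n_{j_l}}}$ cannot both be secured.

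Even granting tightness, a second ingredient is missing: one must pass from ``every point has a good neighborhood'' to ``a compact set has a good neighborhood''. In your partition-of-unity framework this amounts to summing infinitely many weak limits $\eta_k$ with no control on the tails $\sum_{k>K}w_{\rho_k}(\mu_{n_j})$, which is again a tightness issue. The paper instead covers the relevant compact set by finitely many good neighborhoods and invokes the complementation result of \cite[Lemmas 2.4 and 2.5]{AALMPPV1} to realize $\lipfree{\cl{U}\cup\set{0}}$, for a suitable neighborhood $U$ of the compact set, as a complemented subspace of a finite direct sum of spaces with property (V$^*$) (Lemma \ref{lm:local to compact}); it then uses the linear approximation operator of \cite[Theorem 3.2]{ANPP} to place the (V$^*$)-set, up to $\varepsilon$, inside $\lipfree{\cl{U}}$ as a (V$^*$)-set there, and concludes with Grothendieck's lemma. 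Both of these steps would need concrete substitutes in your approach.
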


A Banach space has the \emph{Schur property} if every weakly convergent sequence is norm convergent. Clearly, every such space is weakly sequentially complete. In \cite{AGPP}, Lipschitz-free spaces $\lipfree{M}$ with the Schur property are characterized: this holds precisely when the completion of $M$ is \emph{purely 1-unrectifiable}, meaning that it contains no bi-Lipschitz copy of a subset of $\RR$ with positive measure. Thus, if Question \ref{q:equiv v* wsc c0} has a positive answer, then this should also be true:

\begin{question}
If $M$ is a complete, purely 1-unrectifiable metric space, does $\lipfree{M}$ have property (V$^*$)?
\end{question}

We cannot answer this in general. However, compact and local determination are equivalent for $\lipfree{M}$ when $M$ is locally compact, so property (V$^*$) is compactly determined in that particular case. This is enough to establish the following partial result.

\begin{maintheorem}
\label{mth:locally compact p1u}
If $M$ is a complete, locally compact, purely 1-unrectifiable metric space then $\lipfree{M}$ has property (V$^*$).
\end{maintheorem}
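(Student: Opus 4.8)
The plan is to combine Theorem A with the announced equivalence, in the locally compact setting, of local and compact determination, and then feed in the known Schur-property characterization of purely 1-unrectifiable spaces. So suppose $M$ is complete, locally compact, and purely 1-unrectifiable. By Theorem A it suffices to show that every point $x\in M$ has a neighborhood $U$ with $\free{U}$ having property (V$^*$); by local compactness we may take $U$ to be relatively compact, so that its closure $K=\overline{U}$ is a compact metric space. Since $M$ is purely 1-unrectifiable and pure 1-unrectifiability passes to (the completion of) subsets, $K$ is also purely 1-unrectifiable. Now invoke the characterization from \cite{AGPP}: a complete metric space $N$ has $\free{N}$ with the Schur property exactly when $N$ is purely 1-unrectifiable. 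Hence $\free{K}$ has the Schur property, and in particular $\free{K}$ is weakly sequentially complete and — the point I need — has property (V$^*$), since every Schur space trivially has property (V$^*$) (a weakly unconditionally Cauchy series in the dual has its partial sums acting on any bounded set; in a Schur space bounded sets whose weak topology is metrizable behave well, but more simply: in a Schur space every bounded sequence with no norm-convergent subsequence has no weakly convergent subsequence, forcing the relevant sets to be relatively weakly compact). I should state this Schur $\Rightarrow$ (V$^*$) implication cleanly, perhaps as a one-line remark, citing the standard fact that property (V$^*$) holds for any space not containing $\ell_1$-structure obstructions — but the cleanest route is: Schur spaces are weakly sequentially complete, and one checks directly from the definition that a weakly sequentially complete space in which, additionally, relatively weakly compact is equivalent to the needed condition; to avoid circularity I will instead simply quote that Schur spaces have (V$^*$), which is classical.

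With $\free{K}$ having property (V$^*$), I still need to pass from $K$ to the neighborhood $U$. Here the relevant structural fact is that $\free{U}$ embeds complementably in $\free{K}$ whenever $U$ is (say) a subset of $K$, via the standard barycenter/extension construction: there is a linear extension operator realizing $\free{U}$ as a $1$-complemented subspace of $\free{K}$ — this holds in full generality for $\free{S}\subset\free{M}$ when $S\subset M$, with norm-one projection, by McShane extension of Lipschitz functions. Property (V$^*$) is inherited by complemented subspaces (indeed by all subspaces that are ranges of bounded projections, since the defining condition on weakly unconditionally Cauchy series in the dual and relatively weakly compact sets transfers through a projection and its adjoint). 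Therefore $\free{U}$ has property (V$^*$). Since $x\in M$ was arbitrary and $U$ was a neighborhood of $x$, Theorem A applies and $\free{M}$ has property (V$^*$).

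The main obstacle I anticipate is not any single deep step but making sure the "locally compact $\Rightarrow$ local determination suffices" reduction is used correctly: I must ensure that the neighborhoods $U$ I choose are genuinely open in $M$ and have compact closure, and that pure 1-unrectifiability really is hereditary in the precise form needed (it is, since a bi-Lipschitz copy of a positive-measure subset of $\RR$ inside $\overline{U}$ would also sit inside $\overline{M}=M$). A secondary point requiring care is the exact statement of how $\free{U}$ relates to $\free{K}$: if one prefers to avoid the complementation argument, one can instead observe directly that $U$ itself, being an open subset of a purely 1-unrectifiable space, is purely 1-unrectifiable, so that $\free{\overline{U}}$ has the Schur property and hence (V$^*$) by the same citation — this bypasses complementation entirely and is probably the cleanest write-up, at the cost of applying \cite{AGPP} to $\overline{U}$ rather than to an arbitrary compact subset. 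Either way, the genuinely new ingredient is Theorem A; everything else is assembling known results, and the proof should be short.
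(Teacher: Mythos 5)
Your reduction to compact sets is exactly right: Theorem \ref{mth:locally determined} together with local compactness (Proposition \ref{pr:local vs compact determination}) reduces everything to showing that $\lipfree{K}$ has property (V$^*$) for $K$ compact and purely 1-unrectifiable, and pure 1-unrectifiability is indeed hereditary, so $\lipfree{\cl{U}}$ has the Schur property by \cite{AGPP}. The gap is the step ``Schur $\Rightarrow$ (V$^*$)'', which you propose to ``simply quote'' as classical. It is not a known theorem for general Banach spaces, and your parenthetical justification only establishes that in a Schur space relatively weakly compact sets coincide with relatively norm compact sets; it does not show that a (V$^*$)-set --- a bounded set containing no \emph{complemented} $\ell_1$-basis --- must be relatively compact. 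A bounded non-relatively-compact set in a Schur space does contain an $\ell_1$-basis (Rosenthal plus weak sequential completeness), but nothing forces that basis to span a complemented subspace, which is what failure of the (V$^*$)-set condition requires. Note the internal evidence: if Schur implied (V$^*$), the paper's Question 3 (whether $\lipfree{M}$ has (V$^*$) for \emph{every} complete purely 1-unrectifiable $M$) would be answered immediately by \cite{AGPP}, and the local compactness hypothesis in Theorem \ref{mth:locally compact p1u} would be superfluous --- the authors explicitly state they cannot prove that.

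What the paper uses instead, for compact purely 1-unrectifiable $K$, is a structural fact stronger than the Schur property: $\lipfree{K}$ is L-embedded (an L-summand in its bidual) by \cite{APS2}, and L-embedded spaces have property (V$^*$) by Pfitzner's theorem \cite{Pfitzner_v*}; alternatively, $\lipfree{K}$ is the dual of an M-embedded space \cite{AGPP}, and such duals have (V$^*$) by \cite{HWW}. Substituting either of these for your citation closes the gap, and the rest of your argument then goes through. A secondary caution: your claim that $\lipfree{U}$ is always $1$-complemented in $\lipfree{\cl{U}}$ via McShane extension is false in general --- McShane's extension is not linear, and $\lipfree{S}$ need not be complemented in $\lipfree{M}$ for $S\subset M$ --- but, as you observe yourself, this detour is unnecessary, since local determination only requires $\lipfree{\cl{U}}$ itself to have the property.
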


Both Theorem \ref{mth:locally determined} and Theorem \ref{mth:locally compact p1u} imply immediately, for instance, that $\lipfree{M}$ has property (V$^*$) when $M$ is discrete and complete.

Yet another situation concerns superreflexive Banach spaces $X$ (as metric spaces, endowed with the norm metric). The main result of \cite{KP} shows that $\lipfree{M}$ has property (V$^*$) whenever $M$ is a compact subset of $X$. This is used in \cite{ANPP} to prove that $\lipfree{X}$ is weakly sequentially complete by compact determination. Hence the obvious question:

\begin{question}
If $X$ is a superreflexive Banach space, does $\lipfree{X}$ have property (V$^*$)?
\end{question}

Again, we are not able to prove this in general. But we are able to find sufficient technical conditions for the compact determination of property (V$^*$) (see Proposition \ref{pr:sufficient conditions}), and we show that they hold for the class of superreflexive spaces with the best possible geometric behavior:

\begin{maintheorem}
\label{mth:hilbert v*}
If $X$ is a Hilbert space, then $\lipfree{X}$ has property (V$^*$).
\end{maintheorem}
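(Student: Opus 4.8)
The plan is to derive Theorem \ref{mth:hilbert v*} by combining the main result of \cite{KP} with the sufficient conditions for compact determination recorded in Proposition \ref{pr:sufficient conditions}. Since a Hilbert space is superreflexive, \cite{KP} already gives that $\lipfree{K}$ has property (V$^*$) for every compact $K\subset X$. Hence it suffices to check that property (V$^*$) is compactly determined for $\lipfree{X}$, i.e. that the Hilbert space $X$ satisfies the hypotheses of Proposition \ref{pr:sufficient conditions}.

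The geometric input, and what sets Hilbert spaces apart from general superreflexive spaces here, is that in a Hilbert space the nearest-point (metric) projection onto any nonempty closed convex set is $1$-Lipschitz. Using this, fix a family $\mathcal{E}$ of finite-dimensional linear subspaces of $X$, directed by inclusion, with $\overline{\bigcup\mathcal{E}}=X$. For $E\in\mathcal{E}$ and $k\in\NN$ put $K_{E,k}=E\cap\overline{B(0,k)}$; this is a compact convex set, and the metric projection $r_{E,k}\colon X\to K_{E,k}$ is a $1$-Lipschitz retraction of $X$ onto it (one may also write $r_{E,k}=R_k\circ P_E$, the composition of the orthogonal projection onto $E$ with the radial retraction onto $\overline{B(0,k)}$, both $1$-Lipschitz). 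Consequently the linearizations $\widehat{r_{E,k}}$ are norm-one projections of $\lipfree{X}$ onto the isometric copy of $\lipfree{K_{E,k}}$ inside it, and the net $\bigl(\widehat{r_{E,k}}\bigr)_{(E,k)}$ converges strongly to the identity of $\lipfree{X}$: by the uniform bound it is enough to test this on the dense set of molecules $(\delta_x-\delta_y)/d(x,y)$, and $r_{E,k}(x)=x$, $r_{E,k}(y)=y$ as soon as $x,y\in E$ and $k\geq\max\{\norm{x},\norm{y}\}$. This net of uniformly Lipschitz retractions with compact (convex) ranges is exactly the data required to apply Proposition \ref{pr:sufficient conditions}, yielding that property (V$^*$) is compactly determined for $\lipfree{X}$; together with the first paragraph this proves $\lipfree{X}$ has property (V$^*$).

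The only subtleties lie in bookkeeping rather than in substance: one must work with nets rather than sequences to cover the non-separable case (harmless, since $\lipfree{X}$ is the closed span of the molecules, so strong convergence on a dense set plus a uniform bound suffices), and one must verify that the construction above meets the precise formulation of Proposition \ref{pr:sufficient conditions}. I expect the genuinely hard part — establishing that a strongly convergent net of norm-one projections onto free spaces over compact sets forces property (V$^*$) — to be absorbed into that proposition, which we are entitled to assume; so the proof of Theorem \ref{mth:hilbert v*} itself reduces to the elementary Hilbert-space geometry described above.
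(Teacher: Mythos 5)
Your overall strategy is the paper's: combine \cite[Corollary 11]{KP} with Proposition \ref{pr:sufficient conditions}, using the fact that nearest-point projections onto closed convex subsets of a Hilbert space are non-expansive. But the way you verify the hypothesis of Proposition \ref{pr:sufficient conditions} has a genuine gap. The only condition in that proposition which your construction could plausibly address is \ref{retract}: \emph{every compact subset of $M$ is contained in a compact Lipschitz retract of $M$}. Your retracts are the sets $K_{E,k}=E\cap B(0,k)$ with $E$ finite-dimensional, and a compact subset of an infinite-dimensional Hilbert space is in general contained in no finite-dimensional subspace (e.g.\ $\set{0}\cup\set{e_n/n : n\in\NN}$), so these retracts do not witness \ref{retract}. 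The substitute data you offer instead --- a net of norm-one projections $\widehat{r_{E,k}}$ converging strongly to the identity --- is not among the hypotheses (a)--(e) of Proposition \ref{pr:sufficient conditions}, and it does not suffice on its own: strong (pointwise) convergence of projections gives no uniform approximation on a bounded (V$^*$)-set $\Gamma$, which is what one needs to invoke Grothendieck's lemma; and the whole difficulty that conditions (a)--(e) are designed to overcome is precisely that a subset of $\lipfree{K}$ which is a (V$^*$)-set with respect to $\lipfree{X}$ need not obviously be one with respect to $\lipfree{L}$ for a compact $L\supset K$. The retraction argument handles this via Fact \ref{fact:sets}\ref{fact:sets:d} applied to the projection $\widehat{\pi}$ onto $\lipfree{L}$ --- but only if $L$ actually contains $K$, so that $\widehat{\pi}$ fixes $\Gamma$.

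The repair is short and is exactly what the paper does: given a compact $K\subset X$, take $L=\cl{\conv}\,K$, which is compact (Mazur) and convex, and let $\pi\colon X\to L$ be the nearest-point projection, a $1$-Lipschitz retraction onto a compact set containing $K$. This verifies condition \ref{retract} verbatim, and the rest of your argument (superreflexivity of $X$ plus \cite[Corollary 11]{KP} for the compact pieces) then goes through. So the geometric ingredient you identified is the right one; it just has to be applied to the closed convex hulls of arbitrary compact sets rather than to an exhausting family of finite-dimensional balls.
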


The class of superreflexive spaces strongly contrasts with the discrete metric spaces mentioned above. And in fact, while property (V$^*$) of the Lipschitz-free space over a discrete metric space follows from reduction to finite metric spaces, the result in \cite{KP} is based on the abundance of curves and rich differentiation structure in superreflexive spaces. These allow estimates of incremental quotients of Lipschitz functions by integrals of their derivatives. It might be expected that arguments analogous to those in \cite{KP} could be applied in other metric settings possessing a similar structure. Indeed, such an agreeable framework is provided by certain Carnot-Carath\'eodory spaces, including the Carnot groups.

\begin{maintheorem}
\label{mth:carnot v*}
If $G$ is a Carnot group, then $\lipfree{G}$ has property (V$^*$).
\end{maintheorem}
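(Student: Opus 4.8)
The plan is to deduce Theorem \ref{mth:carnot v*} from Proposition \ref{pr:sufficient conditions}, by checking its hypotheses for a Carnot group $G$ equipped with its Carnot--Carath\'eodory metric $d_{cc}$; the geometric input will be the sub-Riemannian calculus on $G$ (Chow--Rashevskii connectivity, the ball--box theorem, and Pansu's differentiation theorem), which plays the role that the convexity and curve structure of superreflexive spaces plays in \cite{KP}.

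First I would record the basic metric structure of $(G,d_{cc})$. By Chow--Rashevskii and the ball--box estimates, $d_{cc}$ is finite and metrizes the manifold topology on $G\cong\RR^n$, so $(G,d_{cc})$ is a complete, locally compact length space; by Hopf--Rinow it is then proper and geodesic, and any two points are joined by a horizontal geodesic of length equal to their distance. Local compactness already has a useful consequence: combined with Theorem \ref{mth:locally determined} and the equivalence of compact and local determination over locally compact metric spaces (recorded earlier in the paper), it shows that property (V$^*$) is compactly determined for $\lipfree{G}$; thus everything reduces to producing property (V$^*$) for $\lipfree{K}$ with $K\subset G$ compact, which is what the curve-and-differentiation mechanism behind Proposition \ref{pr:sufficient conditions} is designed to do.

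Next I would verify the hypotheses of Proposition \ref{pr:sufficient conditions} for $G$. There are two ingredients. \emph{A bi-H\"older model:} the ball--box theorem yields, on each bounded subset of $G$, constants $c,C>0$ with $c\,|x-y|_{\RR^n}\le d_{cc}(x,y)\le C\,|x-y|_{\RR^n}^{1/s}$, where $s$ is the step of $G$; this is exactly the bi-H\"older comparison with a Euclidean metric referred to in the statement, and it ties $G$ to a model space ($\RR^n$, a Hilbert space) whose Lipschitz-free space has property (V$^*$) by Theorem \ref{mth:hilbert v*}. \emph{Differentiation and integration of Lipschitz functions:} by Pansu's Rademacher-type theorem, every Lipschitz $f\colon G\to\RR$ is Pansu differentiable almost everywhere with a horizontal gradient $\nabla_H f$ satisfying $|\nabla_H f|\le\Lip(f)$, and $f(x)-f(y)=\int_0^{\ell}\langle\nabla_H f(\gamma(t)),\dot\gamma(t)\rangle\,dt$ along a horizontal geodesic $\gamma$ from $y$ to $x$; equivalently, since $(G,d_{cc})$ is Ahlfors regular, doubling, and supports a $(1,1)$-Poincar\'e inequality, the oscillation of $f$ on a ball is controlled by the mean of $|\nabla_H f|$ on a dilate of that ball. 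These are precisely the data required by Proposition \ref{pr:sufficient conditions}, so it applies and gives property (V$^*$) for $\lipfree{K}$, hence for $\lipfree{G}$.

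The step I expect to be the main obstacle is the second ingredient --- or rather, the task of making the \cite{KP}-style approximation scheme underlying Proposition \ref{pr:sufficient conditions} run in the sub-Riemannian setting, where $d_{cc}$ is only H\"older, not bi-Lipschitz, equivalent to a Euclidean metric, so that finite-dimensionality cannot be exploited in the naive way. Concretely, one must arrange the decompositions of a compact $K\subset G$ and the splicing of incremental-quotient estimates so that the H\"older exponent $1/s$ enters only through harmless constants; this is presumably the reason Proposition \ref{pr:sufficient conditions} is formulated to accept a bi-H\"older parametrization together with a differentiation-and-integration package, rather than an honest bi-Lipschitz chart. A related but more routine subtlety is that horizontal geodesics between nearby points need not be unique and that Pansu differentiability fails on a Haar-null set: one handles this either by selecting geodesics measurably and discarding the exceptional sets, or --- more cleanly --- by using the Poincar\'e-inequality formulation above, the doubling and Poincar\'e structure of Carnot groups being exactly what keeps the averaging arguments under control.
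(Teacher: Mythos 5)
There is a genuine gap, and it lies exactly where the real work of the paper is. You invoke Proposition \ref{pr:sufficient conditions} as if it \emph{produced} property (V$^*$) for $\lipfree{K}$ with $K\subset G$ compact, given a ``differentiation-and-integration package.'' It does not: that proposition only gives sufficient conditions for property (V$^*$) to be \emph{compactly determined}, i.e.\ it reduces $\lipfree{M}$ to the spaces $\lipfree{K}$, and its hypotheses are extension/retraction properties (WUC extension, $\ell_1$-valued Lipschitz extension, linear extension operators, compact Lipschitz retracts), not Pansu differentiability or a Poincar\'e inequality. The reduction to compact sets is in any case already free for a Carnot group, since $(G,d_{cc})$ is locally compact and property (V$^*$) is locally determined (Theorem \ref{mth:locally determined} together with Proposition \ref{pr:local vs compact determination}); that part of your argument is fine. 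What is missing is the entire proof that $\lipfree{K}$ has property (V$^*$) for compact $K\subset G$. In the paper this is the content of Theorem \ref{thm:carnot_compact}: a Bourgain-style construction which, given a bounded non-RSWC set in $\lipfree{K}$, builds a WUC series in $\Lip_0(K)$ that is bounded away from zero on that set, using the smooth approximation Lemma \ref{lm:approximation} (mollification with upper-gradient control $\enorm{X(g)}{s}$), the Hilbert-space convex-combination Lemma \ref{lem:Hilbert_conv_comb} applied in the spaces $H_{(n,j)}$ of $L_2$-functions along geodesics, the combinatorial Fact \ref{ob:index_set}, and an iterative ``almost biorthogonalization.'' You correctly identify this as the main obstacle but then do not carry it out; asserting that Proposition \ref{pr:sufficient conditions} handles it is a misreading of that proposition.

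A second, independent error is the claim that the ball--box bi-H\"older comparison ``ties $G$ to a model space ($\RR^n$, a Hilbert space) whose Lipschitz-free space has property (V$^*$) by Theorem \ref{mth:hilbert v*}.'' Bi-H\"older equivalence of metrics does not induce any isomorphism (or even a bounded operator) between the corresponding Lipschitz-free spaces --- a H\"older change of metric alters $\lipfree{M}$ drastically (snowflaking a compact space makes its free space isomorphic to $\ell_1$) --- so no property of $\lipfree{\RR^n}$ transfers to $\lipfree{G}$ this way. In the paper, condition $(\diamond)$ is used only for quantitative purposes inside Lemma \ref{lm:approximation} and the iterative construction (to get the H\"older estimate \ref{g_holder} on the mollified approximants and to control the finite $\eta$-nets $\mathcal{N}_k$), not to transport property (V$^*$) from the Euclidean model.
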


See Theorem \ref{thm:carnot_compact} for the more general result.

Finally, we also consider briefly Silber's property (R), introduced recently in \cite{Silber} as somewhat of an analog to property (V$^*$) for super weak compactness, and we show that all Lipschitz-free spaces covered by Theorem \ref{thm:carnot_compact} satisfy property (R) as well. The same holds for Lipschitz-free spaces in Theorems \ref{mth:locally compact p1u} and \ref{mth:hilbert v*}, but this already follows from the results in \cite{Silber}.

\medskip
\noindent \textbf{Notation.} We use standard notation throughout the paper. $X$ usually stands for a Banach space, and $B_X$ and $S_X$ denote its closed unit ball and unit sphere. By ``operator'' we mean a bounded linear operator between Banach spaces. On the other hand, $M$ stands for a complete metric space with metric $d$, $B(x,r)$ is the closed ball with center $x\in M$ and radius $r\geq 0$, and we use the notation
$$
[E]_r = \set{x\in M \,:\, d(x,E)\leq r}
$$
for $E\subset M$ and $r\geq 0$. We assume that $M$ is \emph{pointed}, that is, it comes equipped with a distinguished ``base point'' $0\in M$, and write
$$
\rad(E) = \sup\set{d(x,0) \,:\, x\in E}
$$
for the \emph{radius} of $E\subset M$.

\subsection{Basics on Lipschitz-free spaces}
\label{sec:lipfree}

We consider the Banach space $\Lip_0(M)$ of all Lipschitz functions $f:M\to\RR$ such that $f(0)=0$, endowed with the Lipschitz norm
$$
\Lip(f) = \sup\set{\frac{f(x)-f(y)}{d(x,y)} \,:\, x\neq y\in M} .
$$
The evaluation functionals $\delta(x):f\mapsto f(x)$ belong to $\Lip_0(M)^*$ for each $x\in M$, so this defines a mapping $\delta:M\to\Lip_0(M)^*$ which is, in fact, an isometric embedding. The \emph{Lipschitz-free space} over $M$ is defined as
$$
\lipfree{M} = \cl{\lspan}\,\delta(M) \subset \Lip_0(M)^* .
$$
Then $\lipfree{M}$ is an isometric predual of $\Lip_0(M)$, such that the weak$^*$ topology on $B_{\Lip_0(M)}$ coincides with the topology of pointwise convergence. Changes of base point result in linearly isometric Lipschitz and Lipschitz-free spaces. The standard reference for these spaces is \cite{Weaver2}.

The space $\lipfree{M}$ can be thought of as a linearization of $M$ (being the closed linear hull of its isometric copy $\delta(M)$) with the following extension property: any Lipschitz mapping $f:M\to X$ into a Banach space $X$ with $f(0)=0$ can be linearized into an operator $F:\lipfree{M}\to X$ such that $F(\delta(x))=f(x)$ for $x\in M$. In particular, any Lipschitz mapping $f:M\to N$ between pointed metric spaces with $f(0_M)=0_N$ can be linearized into an operator $\widehat{f}:\lipfree{M}\to\lipfree{N}$ such that $\widehat{f}(\delta(x))=\delta(f(x))$ for $x\in M$. 

If $N\subset M$ is closed and contains $0$, then $\lipfree{N}$ can be identified isometrically with the subspace $\cl{\lspan}\,\delta(N)$ of $\lipfree{M}$. The \emph{support} of $m\in\lipfree{M}$, denoted by $\supp(m)$, is defined as the smallest closed set $S\subset M$ such that $m\in\lipfree{S\cup\set{0}}$; its existence is proved in \cite{APPP}. It has the property that $\duality{f,m}=\duality{g,m}$ whenever $f,g\in\Lip_0(M)$ agree on $\supp(m)$ \cite[Proposition 2.6]{APPP}. The set of elements of $\lipfree{M}$ whose support is finite is precisely $\lspan\,\delta(M)$.

Elements of $\lipfree{M}$ of the form
$$
m_{xy} = \frac{\delta(x)-\delta(y)}{d(x,y)} \in S_{\lipfree{M}}
$$
for $x\neq y\in M$ are called \emph{(elementary) molecules}. Since the molecules norm $\Lip_0(M)$, they span $B_{\lipfree{M}}$. As a consequence, given $\varepsilon>0$, any $m\in\lipfree{M}$ can be represented as a series of molecules
\begin{equation}\label{eq:sum of molecules}
m = \sum_{n=1}^\infty a_nm_{x_ny_n}
\end{equation}
such that $a_n\geq 0$ and $\sum_na_n\leq\norm{m}+\varepsilon$. If $m$ has finite support then the sum in \eqref{eq:sum of molecules} can be chosen to be finite and such that $\sum_na_n=\norm{m}$ (we call such a representation \emph{optimal}).

Last, we introduce a special class of operators on Lipschitz and Lipschitz-free spaces. Suppose that $h:M\to\RR$ is Lipschitz and has bounded support, and let $N$ be a closed subset of $M$ containing $\supp(h)$. Then the mapping $W_h:\Lip_0(N)\to\Lip_0(M)$ given by
\begin{equation}\label{eq:weighting operator}
W_h(f)(x) = \begin{cases}
f(x)h(x) &\text{if $x\in N$} \\
0 &\text{otherwise}
\end{cases}
\end{equation}
is a weak$^*$-weak$^*$-continuous operator whose norm satisfies
$$
\norm{W_h}\leq \norm{h}_\infty+\rad(\supp(h))\Lip(h)
$$
by \cite[Lemma 2.3]{APPP}. Its adjoint can thus be restricted to $\lipfree{M}$ to yield an operator $W_h^*:\lipfree{M}\to\lipfree{N}$ with $\norm{W_h}=\norm{W_h^*}$; if we see $\lipfree{N}$ as a subspace of $\lipfree{M}$, then $\duality{f,W_h^*(m)}=\duality{fh,m}$ for $m\in\lipfree{M}$ and $f\in\Lip_0(N)$. The operators $W_h$ and $W_h^*$ are referred to as \emph{weighting operators}.

\subsection{Basics on properties (V*) and (R)}

\begin{definition}
\label{def:WUC}
Let $(x_n)$ be a sequence in a Banach space $X$. We say that the (formal) series $\sum_nx_n$ is \emph{weakly unconditionally convergent} (or \emph{weakly unconditionally Cauchy}), \emph{WUC} for short, if $\sum_n\abs{\duality{x^*,x_n}}<\infty$ for every $x^*\in X^*$.
\end{definition}

The following easy facts about WUC series follow from either the Banach-Steinhaus theorem or e.g.\ \cite[Proposition II.D.4]{Wojt}.

\begin{fact}
\label{fact:wuc}
Let $X$ be a Banach space.
\begin{enumerate}[label={\upshape{(\alph*)}}]
\item\label{wuc operator} Operators map WUC series to WUC series.
\item\label{wuc bound} If $\sum_nx_n$ is WUC in $X$ then there exists $C<\infty$ such that $\sum_n\abs{\duality{x^*,x_n}}\leq C\norm{x^*}$ for all $x^*\in X^*$. In particular, $\norm{x_n}\leq C$ for all $n$.
\item\label{w*uc} A formal series $\sum_n x^*_n$ in $X^*$ is WUC whenever $\sum_n\abs{\duality{x^*_n,x}}<\infty$ for all $x\in X$ (that is, in the definition it is enough to consider functionals in the predual).
\item\label{wuc c0} $\sum_n x_n$ is a WUC series in X if and only if there exists an operator ${S:c_0\to X}$ such that $Se_n=x_n$, where $e_n$ is the canonical $c_0$ basis.
\item\label{wuc l1} $\sum_n x^*_n$ is a WUC series in $X^*$ if and only if there exists an operator $T:X\to\ell_1$ such that $x^*_n=e_n\circ T$, where $e_n$ is the canonical $c_0$ basis.
\end{enumerate}
\end{fact}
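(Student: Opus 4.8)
The plan is to derive all five items from the uniform boundedness principle together with the elementary identity
\[
\sum_{n=1}^\infty \abs{\duality{x^*,x_n}} \;=\; \sup_{N,\,\varepsilon}\, \abs{\duality{x^*,\textstyle\sum_{n=1}^N\varepsilon_n x_n}},
\]
valid for any sequence $(x_n)$ in $X$ and any $x^*\in X^*$, where the supremum runs over $N\in\NN$ and all sign sequences $\varepsilon=(\varepsilon_1,\dots,\varepsilon_N)\in\set{-1,1}^N$ (and symmetrically with the roles of $X$ and $X^*$ exchanged). Item \ref{wuc operator} is immediate: for an operator $T\colon X\to Y$ and $y^*\in Y^*$ we have $\duality{y^*,Tx_n}=\duality{T^*y^*,x_n}$, so WUC of $\sum_n x_n$ forces WUC of $\sum_n Tx_n$.

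For \ref{wuc bound}, suppose $\sum_n x_n$ is WUC and set $u_{N,\varepsilon}=\sum_{n=1}^N\varepsilon_n x_n$, regarded as an element of $X\subset X^{**}$. By the identity, for each $x^*\in X^*$ the scalars $\duality{x^*,u_{N,\varepsilon}}$ have supremum $\sum_n\abs{\duality{x^*,x_n}}<\infty$, so the family $(u_{N,\varepsilon})$ is pointwise bounded on $X^*$; Banach--Steinhaus yields $C<\infty$ with $\norm{u_{N,\varepsilon}}\leq C$ for all $N,\varepsilon$, and feeding this back into the identity gives $\sum_n\abs{\duality{x^*,x_n}}\leq C\norm{x^*}$, whence $\norm{x_n}\leq C$ as well. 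Item \ref{w*uc} is the mirror image: if $\sum_n\abs{\duality{x_n^*,x}}<\infty$ for all $x\in X$, then the signed partial sums $v_{N,\varepsilon}=\sum_{n=1}^N\varepsilon_n x_n^*\in X^*$ are pointwise bounded on $X$, hence norm bounded by some $C$; choosing the signs to match $\duality{x^{**},x_n^*}$ shows $\sum_{n=1}^N\abs{\duality{x^{**},x_n^*}}\leq C\norm{x^{**}}$ for every $x^{**}\in X^{**}$, and letting $N\to\infty$ we conclude that $\sum_n x_n^*$ is WUC.

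For \ref{wuc c0}: the ``if'' part follows from \ref{wuc operator} since $\sum_n e_n$ is trivially WUC in $c_0$; for the ``only if'' part, define $S$ on finitely supported sequences by $S((a_n)_n)=\sum_n a_n x_n$, and use \ref{wuc bound} to estimate $\norm{S((a_n))}=\sup_{\norm{x^*}\leq 1}\abs{\sum_n a_n\duality{x^*,x_n}}\leq C\norm{(a_n)}_\infty$, so that (the partial sums of $\sum_n a_n x_n$ being Cauchy for any $(a_n)\in c_0$) $S$ extends to an operator $c_0\to X$ with $Se_n=x_n$. For \ref{wuc l1}: if $T\colon X\to\ell_1$ satisfies $x_n^*=e_n\circ T$, then $\sum_n\abs{\duality{x_n^*,x}}=\norm{Tx}_1<\infty$ for each $x\in X$, so $\sum_n x_n^*$ is WUC by \ref{w*uc}; conversely, if $\sum_n x_n^*$ is WUC, then \ref{wuc bound} applied in $X^*$ and restricted to $X\subset X^{**}$ shows that $Tx:=(\duality{x_n^*,x})_n$ defines a bounded operator $X\to\ell_1$ with $e_n\circ T=x_n^*$. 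There is no genuine obstacle in any of this; the only step that needs a moment's thought is the passage to the bidual in \ref{wuc bound} and \ref{w*uc}, which is precisely where the Banach--Steinhaus theorem is invoked.
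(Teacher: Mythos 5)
Your argument is correct and is exactly the standard Banach--Steinhaus argument that the paper itself invokes (it gives no proof, merely citing the uniform boundedness principle or \cite[Proposition II.D.4]{Wojt}). All five items check out, including the sign-choice identity and the passage through the bidual in \ref{wuc bound} and \ref{w*uc}.
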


Subsequent discussions will involve the following four special classes of sets in Banach spaces.

\begin{definition}
\label{def:sets}
Let $\Gamma$ be a subset of a Banach space $X$. We say that $\Gamma$ is...
\begin{itemize}
\item ...a \emph{(V$^*$)-set (with respect to $X$)} if
$$
\lim_{n\to\infty}\sup_{x\in\Gamma}\abs{\duality{x,x_n^*}} = 0
$$
for every WUC series $\sum_nx_n^\ast$ in $X^\ast$.
\item ...\emph{weakly precompact} if every sequence in $\Gamma$ has a weakly Cauchy subsequence.
\item ...\emph{relatively weakly compact} if its weak closure is weakly compact.
\item ...\emph{relatively super weakly compact (RSWC)} if $\Gamma^\mathcal{U}$ is relatively weakly compact in $X^\mathcal{U}$ for every free ultrafilter $\mathcal{U}$ on $\NN$.
\end{itemize}
\end{definition}

Here, the ultrapower $X^\mathcal{U}$ is the quotient of $\ell_\infty(X)$ by the subspace of those $(x_n)_n$ such that $\lim_{\mathcal{U},n}x_n=0$, and $\Gamma^\mathcal{U}$ is the subset of elements of $X^\mathcal{U}$ having some representative $(x_n)_n$ such that $x_n\in\Gamma$ for all $n$. For further reference, see e.g.\ \cite{LancienRaja}.

Some of these classes of sets have useful alternative characterizations. For instance, by Rosenthal's $\ell_1$ theorem, a set is weakly precompact precisely when it is bounded and contains no equivalent $\ell_1$ basis. (V$^*$)-sets have several useful characterizations as well. We will use these interchangeably without mention.

\begin{proposition}[{\cite[Proposition 1.1]{Bombal}}]
\label{pr:v* sets}
For a bounded subset $\Gamma$ of a Banach space $X$, the following are equivalent:
\begin{enumerate}[label={\upshape{(\roman*)}}]
\item $\Gamma$ is a (V$^*$)-set with respect to $X$,
\item\label{pr:v* sets:l1 basis} $\Gamma$ does not contain an equivalent $\ell_1$ basis whose closed linear span is complemented in $X$,
\item\label{pr:v* sets:l1 op} $T(\Gamma)$ is relatively compact for every operator $T:X\to\ell_1$.
\end{enumerate}
\end{proposition}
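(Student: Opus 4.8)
The engine of the proof is the dictionary between WUC series in $X^*$ and operators into $\ell_1$ recorded in Fact~\ref{fact:wuc}\ref{wuc l1}: a formal series $\sum_n x_n^*$ in $X^*$ is WUC if and only if $x_n^*=e_n\circ T$ for some operator $T\colon X\to\ell_1$, in which case $Tx=(\duality{x_n^*,x})_n$. Under this correspondence the defining condition of a (V$^*$)-set becomes a statement about \emph{all} operators $X\to\ell_1$, which is exactly the setting of (iii). The plan is therefore to prove (i)$\Leftrightarrow$(iii) and (ii)$\Leftrightarrow$(iii), each time exploiting the special structure of $\ell_1$.

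For (i)$\Leftrightarrow$(iii) I would use the classical description of norm-compact subsets of $\ell_1$: a bounded set $K\subseteq\ell_1$ is relatively compact if and only if its tails are uniformly small, $\lim_{N\to\infty}\sup_{y\in K}\sum_{n\geq N}\abs{y_n}=0$ (the truncations onto the first $N$ coordinates then converge uniformly on $K$ to the identity, and each has relatively compact, being finite-rank, image). Writing $x_n^*=e_n\circ T$ for an operator $T\colon X\to\ell_1$, the assertion ``$T(\Gamma)$ is relatively compact'' thus becomes ``$\sup_{x\in\Gamma}\sum_{n\geq N}\abs{\duality{x_n^*,x}}\to 0$''. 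Since individual terms are dominated by tail sums, this yields (iii)$\Rightarrow$(i) at once. For the converse, if $T(\Gamma)$ is not relatively compact then, using Fact~\ref{fact:wuc}\ref{wuc bound} to get a uniform bound $\sum_n\abs{\duality{x_n^*,x}}\leq C$ over $\Gamma$, one produces $\delta>0$, consecutive finite index intervals $I_1<I_2<\cdots$, signs $\varepsilon_n\in\set{-1,1}$, and points $y_k\in\Gamma$ such that the blocked functionals $u_k=\sum_{n\in I_k}\varepsilon_n x_n^*$ satisfy $\duality{u_k,y_k}\geq\delta$; the series $\sum_k u_k$ is again WUC because $\sum_k\abs{\duality{u_k,x}}\leq\sum_n\abs{\duality{x_n^*,x}}<\infty$ for every $x\in X$ (Fact~\ref{fact:wuc}\ref{w*uc}), so this contradicts (i).

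For (ii)$\Leftrightarrow$(iii) I would argue by contraposition in both directions. If $\Gamma$ contains a sequence $(z_n)$ equivalent to the $\ell_1$ basis whose closed linear span $Z_0$ is complemented in $X$, then composing a projection $X\to Z_0$ with the isomorphism $Z_0\to\ell_1$ determined by $z_n\mapsto e_n$ yields an operator $T\colon X\to\ell_1$ with $\set{e_n:n\in\NN}\subseteq T(\Gamma)$, which is not relatively compact, so (iii) fails. Conversely, suppose $T(\Gamma)$ is not relatively compact for some operator $T\colon X\to\ell_1$, and choose $(y_k)$ in $\Gamma$ such that $(Ty_k)$ has no norm-convergent subsequence. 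Since $\ell_1$ has the Schur property, weakly Cauchy sequences in $\ell_1$ converge in norm, so by Rosenthal's $\ell_1$ theorem a subsequence of $(Ty_k)$ is equivalent to the $\ell_1$ basis; the upper $\ell_1$-estimate for $(y_k)$ along this subsequence comes from the boundedness of $\Gamma$, and the lower one is transferred by the bounded operator $T$, so $(y_k)$ itself is equivalent to the $\ell_1$ basis along that subsequence. It remains to obtain a complemented span, and this is where the real work lies: one passes to a further subsequence along which $(Ty_k)$ is a small perturbation of a block basic sequence $(u_j)$ in $\ell_1$ — first extract a coordinatewise limit $\lambda\in\ell_1$ (its $\ell_1$-membership by Fatou), then glide-hump the seminormalized, coordinatewise-null sequence $(Ty_k-\lambda)$, choosing the block supports to lie beyond the bulk of $\lambda$ — so that the closed span $W$ of $(Ty_k)$ along this last subsequence is complemented in $\ell_1$, via an explicit projection built from the disjointly supported blocks together with the single extra direction $\lambda$, and using stability of complementedness under small perturbations. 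If $Z\subseteq X$ denotes the closed span of the matching subsequence of $(y_k)$ and $Q\colon\ell_1\to W$ a projection, then $(T|_Z)^{-1}\circ Q\circ T\colon X\to Z$ is a projection; hence $\Gamma$ contains a complemented copy of the $\ell_1$ basis and (ii) fails.

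The main obstacle is the final step above: an $\ell_1$-basis-equivalent sequence living inside $\ell_1$ need not have complemented closed span, so one cannot simply invoke a projection but must extract a subsequence that is, up to a small perturbation, a genuine block basic sequence, while controlling how the coordinatewise limit $\lambda$ overlaps the block supports. The remaining ingredients are soft: the WUC/$\ell_1$ dictionary of Fact~\ref{fact:wuc}, the tail characterization of compactness in $\ell_1$, Rosenthal's $\ell_1$ theorem, the Schur property of $\ell_1$, and the standard gliding-hump and perturbation lemmas.
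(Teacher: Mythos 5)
Your argument is correct; the paper itself gives no proof of this proposition (it is quoted from Bombal), and what you have written is essentially the standard argument from that source: the WUC/$\ell_1$-operator dictionary reduces (i) to (iii), and the passage between (ii) and (iii) rests on extracting, from a non-relatively-compact bounded subset of $\ell_1$, a sequence equivalent to the $\ell_1$ basis with complemented span. You correctly identify the one genuinely delicate point --- that an isomorphic copy of $\ell_1$ inside $\ell_1$ need not be complemented (Bourgain) --- and your fix works: after centering at the coordinatewise limit $\lambda$ and gliding the humps past an index beyond which the tail of $\lambda$ is small, the span of $\{\lambda+u_j\}$ is the graph of the bounded map $w\mapsto\phi(w)\lambda$ on the $1$-complemented block span $W'=[u_j]$ (with $\phi(\sum_j a_ju_j)=\sum_j a_j$), and $y\mapsto Py+\phi(Py)\lambda$ is an explicit projection onto it since $P\lambda=0$ by support disjointness; the small-perturbation principle then transfers this to $[Ty_{k_j}]$. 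The only inefficiency is the appeal to Rosenthal's theorem and the Schur property, which is redundant: the perturbed block basis produced by the gliding hump is automatically equivalent to the unit vector basis of $\ell_1$, so the $\ell_1$-equivalence of the chosen subsequence comes for free from that construction.
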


We will address RSWC sets exclusively through the following characterization due to Lancien and Raja, which is only valid in the convex case.

\begin{proposition}[{\cite[Theorem 3.8]{LancienRaja}}]
\label{pr:lancien_raja}
Let $\Gamma$ be a closed convex subset of a Banach space $X$. Then $\Gamma$ is not RSWC if and only if there exists $\xi>0$ such that for every $n\in\NN$ there exist $n$-tuples $(x_1,\ldots,x_n)$ in $\Gamma$ and $(f_1,\ldots,f_n)$ in $B_{X^*}$ such that
$$
f_k(x_j) = \begin{cases}
0 &\text{, if $k>j$} \\
\xi &\text{, if $k\leq j$}
\end{cases} .
$$
\end{proposition}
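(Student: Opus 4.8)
Proposition \ref{pr:lancien_raja} is \cite[Theorem 3.8]{LancienRaja}; assuming (as we may) that $\Gamma$ is bounded, I would recover it by passing to ultrapowers in both directions. For the easier implication, suppose the displayed $n$-tuples exist for all $n$ with a common $\xi>0$; index them by $n$, fix a free ultrafilter $\mathcal{U}$ on $\NN$, and for each $j$ let $\widetilde{x}_j\in\Gamma^\mathcal{U}$ be represented by $(x_j^{(n)})_{n\geq j}$ and $\widetilde{f}_j\in B_{(X^\mathcal{U})^*}$ be induced by $(f_j^{(n)})_{n\geq j}$. Since $\{n\geq\max(j,k)\}\in\mathcal{U}$, one gets $\langle\widetilde{f}_k,\widetilde{x}_j\rangle=0$ for $k>j$ and $=\xi$ for $k\leq j$, i.e.\ an \emph{infinite} configuration of the same shape inside $\Gamma^\mathcal{U}$. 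This prevents $\Gamma^\mathcal{U}$ from being relatively weakly compact: otherwise some subsequence $(\widetilde{x}_{j_l})_l$ would converge weakly to a point $y$ (Eberlein--\v{S}mulian), with $\langle\widetilde{f}_k,y\rangle=\xi$ for all $k$; but $y$ lies in the norm-closed convex hull of $\{\widetilde{x}_j\}$ (Mazur), and testing a nearby convex combination against $\widetilde{f}_k$ for $k$ larger than all of its indices forces $\abs{\langle\widetilde{f}_k,y\rangle}<\xi$, a contradiction. Hence $\Gamma$ is not RSWC; only boundedness, not convexity, is used here.

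For the converse, suppose $\Gamma$ is not RSWC, so $\Gamma^\mathcal{U}$ is not relatively weakly compact for some free $\mathcal{U}$. Here convexity enters: $W:=\overline{\Gamma^\mathcal{U}}$ (whose weak and norm closures agree by Mazur) is closed, bounded, convex and not weakly compact, so there is $\Lambda$ in its weak$^*$-closure in $(X^\mathcal{U})^{**}$ with $2\eta:=\dist(\Lambda,X^\mathcal{U})>0$. The classical interleaved James-type construction runs in $X^\mathcal{U}$: alternately pick $\widetilde{g}_k\in B_{(X^\mathcal{U})^*}$ annihilating $\widetilde{x}_1,\ldots,\widetilde{x}_{k-1}$ with $\Lambda(\widetilde{g}_k)=\eta$ --- possible because $\Lambda$ has distance $\geq 2\eta$ from the span of those finitely many vectors --- and then $\widetilde{x}_k\in\Gamma^\mathcal{U}$ in the weak$^*$-neighbourhood $\{z:\abs{\widetilde{g}_i(z)-\eta}<\varepsilon\text{ for }i\leq k\}$ of $\Lambda$, which meets $\Gamma^\mathcal{U}$ since $\Lambda$ is in its weak$^*$-closure. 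This produces, for every $\varepsilon>0$ and $n$, an $n$-tuple in $\Gamma^\mathcal{U}$ and one in $B_{(X^\mathcal{U})^*}$ with pairings \emph{exactly} $0$ for $k>j$ and within $\varepsilon$ of $\eta$ for $k\leq j$.

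To convert these into exact configurations lying in $X$, fix $n$ and take an ultralimit as $\varepsilon\to0$ in a further ultrapower $(X^\mathcal{U})^\mathcal{V}$: the nonzero pairings become exactly $\eta$ while the zeros are preserved. Since finite-dimensional subspaces of $(X^\mathcal{U})^\mathcal{V}$ are, for each $\delta>0$, $(1+\delta)$-isomorphic to subspaces of $X$ via coordinate evaluations that carry representatives of $(\Gamma^\mathcal{U})^\mathcal{V}$-elements into $\Gamma$, the principle of local reflexivity for ultrapowers --- applied to the finitely many points and functionals involved, and reproducing finitely many prescribed pairings exactly at the cost of a factor $1+\delta$ on functional norms --- yields $x_1,\ldots,x_n\in\Gamma$ and, after rescaling, $g_1,\ldots,g_n\in B_{X^*}$ realising the staircase pattern with value $\eta/(1+\delta)$. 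Fixing $\delta$ once and for all gives the statement with, say, $\xi=\eta/2$ and the same $\xi$ for every $n$.

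The main obstacle, I expect, is this last passage from approximate to exact equality: pairings close to $\eta$ are easy, but exact equality cannot be arranged inside $W$ --- a convex set need not meet a nearby affine subspace --- and is obtained only through the additional compactness supplied by a second ultrapower, together with the exact pairing transfer of local reflexivity. Convexity of $\Gamma$ is genuinely used in this direction, both to make $\overline{\Gamma^\mathcal{U}}$ convex (so that a single $\Lambda$ detects the failure of weak compactness) and to guarantee that the weak$^*$-neighbourhoods of $\Lambda$ meet $\Gamma^\mathcal{U}$; the forward direction needs only boundedness.
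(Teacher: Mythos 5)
This proposition is quoted verbatim from Lancien--Raja and the paper offers no proof of its own, so your attempt can only be judged on its own terms. Your forward implication is correct and standard: the diagonal/ultraproduct argument turns the finite staircases into an infinite one in $\Gamma^{\mathcal{U}}$, and the Mazur argument you sketch does rule out relative weak compactness there; you are also right that only boundedness of $\Gamma$ is used in that direction (and boundedness must indeed be assumed throughout, since an unbounded closed convex set is never RSWC yet need not admit staircases of length $2$).

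The reverse implication, however, has a genuine gap at exactly the step you flag as the main obstacle: the descent from the iterated ultrapower to $X$ with \emph{exact} pairings. The two tools you invoke pull in opposite directions, and neither does both jobs at once. Coordinate evaluation at a fixed index (finite representability of the ultrapower in $X$) does carry representatives of elements of $(\Gamma^{\mathcal{U}})^{\mathcal{V}}$ into $\Gamma$, but it reproduces the pairings $\langle\hat g_k,\hat x_j\rangle$ only up to an error that is small on a set of indices belonging to the ultrafilter --- never exactly at any fixed index; if exactness held at some coordinate, the second ultrapower would have been unnecessary in the first place. Conversely, the principle of local reflexivity (and Heinrich's local duality for ultrapowers, which would let you replace the functionals $\hat g_k$ by ones induced from families in $(1+\delta)B_{X^*}$ while keeping the finitely many pairings exact) preserves pairings but does not move the \emph{points} down into $X$, let alone into $\Gamma$. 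There is no established ``PLR for ultrapowers'' that simultaneously places the points in $\Gamma$, the functionals in $(1+\delta)B_{X^*}$, and keeps all $n^2$ pairings exact; your appeal to such a principle is doing all the work and is unproved. After every reduction you legitimately have available, you are left with approximate staircases in $\Gamma\times B_{X^*}$ (both the zeros and the values $\eta$ only up to $\varepsilon$) --- precisely the situation whose upgrade to exactness you yourself identify as impossible by soft means, since a convex set need not meet a nearby affine slice. Closing that gap is the real content of the cited theorem, and Lancien--Raja obtain it by a different route. It is worth noting that the approximate version your argument does yield (with $\abs{f_k(x_j)}<\varepsilon$ above the diagonal and $f_k(x_j)>\xi$ on and below it) is all that the paper actually uses in the proof of Theorem \ref{thm:carnot_compact}, but it is not the statement of the proposition.
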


All four classes of sets defined in Definition \ref{def:sets} share many common properties: they are stable under the most common Banach space operations, they are all countably determined, and they all satisfy some form of Grothendieck's lemma. This is summarized as follows.

\begin{fact}
\label{fact:sets}
Let $\mathcal{C}_X$ be the class of all subsets of a Banach space $X$ of one of the four types in Definition \ref{def:sets}. Then:
\begin{enumerate}[label={\upshape{(\alph*)}}]
\item\label{fact:sets:a} All $\Gamma\in\mathcal{C}_X$ are bounded.
\item If $\Gamma\in\mathcal{C}_X$ and $\Gamma'\subset\Gamma$ then $\Gamma'\in\mathcal{C}_X$.
\item If $\Gamma,\Gamma'\in\mathcal{C}_X$ then $\Gamma+\Gamma'\in\mathcal{C}_X$.
\item\label{fact:sets:d} If $\Gamma\in\mathcal{C}_X$ and $T:X\to Y$ is an operator then $T(\Gamma)\in\mathcal{C}_Y$.
\item\label{fact:sets:conv} If $\Gamma\in\mathcal{C}_X$ then $\conv\,\Gamma\in\mathcal{C}_X$.
\item\label{fact:sets:countable} $\Gamma\in\mathcal{C}_X$ if and only if $A\in\mathcal{C}_X$ for every countable subset $A\subset\Gamma$.
\item\label{fact:sets:grothendieck} $\Gamma\in\mathcal{C}_X$ if and only if for every $\varepsilon>0$ there exists $\Gamma_\varepsilon\in\mathcal{C}_X$ such that $\Gamma\subset\Gamma_\varepsilon+\varepsilon B_X$.
\end{enumerate}
\end{fact}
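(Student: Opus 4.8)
The plan is to verify (a)--(g) class by class, dispatching the many routine checks quickly and isolating the few classical theorems that carry the weight. Three recurring principles will do most of the work. First, the (V$^*$)-condition is controlled by the quantities $\sup_{x\in\Gamma}|\langle x,x_n^*\rangle|$, which are subadditive under Minkowski sums, unchanged by passing to convex hulls, bounded by $\sup_{\Gamma_\varepsilon}|\langle\cdot,x_n^*\rangle|+\varepsilon\|x_n^*\|$ under an $\varepsilon$-covering $\Gamma\subseteq\Gamma_\varepsilon+\varepsilon B_X$, and transform correctly under adjoints since $T^*$ carries WUC series to WUC series by Fact \ref{fact:wuc}\ref{wuc operator}. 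Second, by Rosenthal's theorem and the Eberlein--\v{S}mulian theorem, weak precompactness and relative weak compactness are in essence sequential notions. Third, relative super weak compactness is relative weak compactness transported into the ultrapowers $X^\mathcal{U}$, and each operation relevant here lifts: $(\Gamma+\Gamma')^\mathcal{U}\subseteq\Gamma^\mathcal{U}+(\Gamma')^\mathcal{U}$, $(T\Gamma)^\mathcal{U}\subseteq T^\mathcal{U}(\Gamma^\mathcal{U})$ for the canonical operator $T^\mathcal{U}:X^\mathcal{U}\to Y^\mathcal{U}$, and $\Gamma^\mathcal{U}\subseteq\Gamma_\varepsilon^\mathcal{U}+\varepsilon B_{X^\mathcal{U}}$; so each assertion for RSWC sets follows from the corresponding assertion for relatively weakly compact sets inside an ultrapower.

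With this in hand, (a) and (b) are immediate: boundedness is part of the definition of a (V$^*$)-set, holds for weakly compact sets and for weakly Cauchy sequences, and transfers to $X^\mathcal{U}$; and all the defining conditions only get easier on a subset. For (c) and (d), the (V$^*$)-case is the first principle; for weakly precompact sets one extracts a weakly Cauchy subsequence for the $\Gamma$-summands and then a further one for the $\Gamma'$-summands, and uses that operators preserve weakly Cauchy sequences; for relatively weakly compact sets one uses that the sum of two weakly compact sets is weakly compact and that $T(\overline{\Gamma}^w)$ is a weakly compact set containing $T(\Gamma)$; and the RSWC-cases follow by the third principle. For (e), the (V$^*$)-case is trivial ($\sup_{\conv\Gamma}=\sup_\Gamma$), the relatively-weakly-compact case is the Krein--\v{S}mulian theorem, and for the remaining two classes one reduces to the closed convex hull via (b) and invokes the facts that $\overline{\conv}\,\Gamma$ is weakly precompact when $\Gamma$ is, and RSWC when $\Gamma$ is.

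For (f) the forward direction is trivial, and for the converse one uses that a failure of each property is witnessed by countably much data: in the (V$^*$)-case a single WUC series together with points of $\Gamma$ realizing a positive $\limsup$; in the relatively-weakly-compact case, via Eberlein--\v{S}mulian, a sequence in $\Gamma$ with no weakly convergent subsequence; in the weakly-precompact case a sequence with no weakly Cauchy subsequence; and in the RSWC-case a sequence in $\Gamma^\mathcal{U}$ with no weakly convergent subsequence together with representatives of its terms. In each situation the data lies in a countable subset $A\subseteq\Gamma$, which therefore also fails the property. For (g) the forward direction is again trivial; for the converse, in the (V$^*$)-case combine the covering with the uniform bound $\|x_n^*\|\leq C$ of Fact \ref{fact:wuc}\ref{wuc bound} to get $\limsup_n\sup_\Gamma|\langle\cdot,x_n^*\rangle|\leq\varepsilon C$ for all $\varepsilon$; the relatively-weakly-compact case is Grothendieck's lemma; for weakly precompact sets a diagonal extraction over the $\Gamma_{1/k}$ produces, from any sequence in $\Gamma$, a subsequence whose $\Gamma_{1/k}$-parts are weakly Cauchy for every $k$, which forces the subsequence itself to be weakly Cauchy; and the RSWC-case follows from Grothendieck's lemma in $X^\mathcal{U}$ via the third principle.

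The main obstacle is Part (e) for the weakly-precompact and RSWC classes. Unlike everything else, stability under (closed) convex hull is not read off from the definitions: for weakly precompact sets it rests on a Bourgain/Rosenthal-type averaging argument for $\ell_1$-sequences, and for RSWC sets on the Lancien--Raja theory of super weakly compact convex sets --- one could instead try to transfer a triangular system as in Proposition \ref{pr:lancien_raja} from $\overline{\conv}\,\Gamma$ back into $\Gamma$, but this combinatorial transfer is exactly the delicate point. The rest is the bookkeeping outlined above.
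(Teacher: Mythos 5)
Your proposal is correct and follows essentially the same strategy as the paper, which disposes of most items as ``straightforward from the definitions'' and cites external results for the genuinely nontrivial points. You correctly isolate the only real content: part \ref{fact:sets:conv} for weakly precompact and RSWC sets, which the paper likewise does not prove but attributes to Rosenthal's addendum and to Tu (building on the Lancien--Raja framework), exactly the sources your sketch points to. Where you diverge slightly is in part \ref{fact:sets:grothendieck}: the paper cites Bombal for (V$^*$)-sets, the $\ell_1$-basis characterization for weakly precompact sets, and a result of Cheng--Cheng--Luo--Tu--Zhang for RSWC sets, whereas you give direct arguments (the $\varepsilon C$ estimate, a diagonal extraction, and the classical Grothendieck lemma applied inside $X^{\mathcal{U}}$ via the inclusion $\Gamma^{\mathcal{U}}\subset\Gamma_\varepsilon^{\mathcal{U}}+\varepsilon B_{X^{\mathcal{U}}}$); all three are valid, and the ultrapower route is arguably cleaner than invoking the cited theorem, given that the paper takes the ultrapower formulation as the definition of RSWC. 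One small inaccuracy: boundedness is \emph{not} part of Definition \ref{def:sets} for (V$^*$)-sets (it is only a hypothesis in Proposition \ref{pr:v* sets}), so for part \ref{fact:sets:a} you should supply the one-line argument --- if $\|x_n\|\geq 4^n$ with $x_n\in\Gamma$, choose $x_n^*$ with $\|x_n^*\|=2^{-n}$ and $\langle x_n,x_n^*\rangle\geq 2^{-n}\|x_n\|$; the series $\sum_n x_n^*$ is absolutely convergent, hence WUC, yet $\sup_{x\in\Gamma}|\langle x,x_n^*\rangle|\geq 2^n$.
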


\noindent Indeed, statements \ref{fact:sets:a}-\ref{fact:sets:d} are straightforward to check from the definitions. Property \ref{fact:sets:conv} is also straightforward for (V$^*$)-sets and relatively weakly compact sets; for weakly precompact sets, an argument is provided by Rosenthal in \cite[Addendum]{Rosenthal}; and for RSWC sets, this was proved by Tu in \cite[Theorem 3.1]{Tu}. Property \ref{fact:sets:countable} is either obvious or follows from the Eberlein-\v{S}mulyan theorem. Statement \ref{fact:sets:grothendieck} for relatively weakly compact sets is the classical Grothendieck lemma (see e.g.\ \cite[Lemma II.C.7]{Wojt}); it was proved for (V$^*$)-sets in \cite[Corollary 1.7]{Bombal} and for RSWC sets in \cite[Theorem 4.1]{CCLTZ}; and is easy to check for weakly precompact sets using their characterization in terms of $\ell_1$ bases.

Finally, we have the following dependency relations between the four notions:
\begin{equation}
\label{eq:set implications}
\text{RSWC} \Longrightarrow \text{relatively weakly compact} \Longrightarrow \text{weakly precompact} \Longrightarrow \text{(V$^*$)-set}
\end{equation}
Indeed, the first one follows from the definition, the second one from the Eberlein-\v{S}mulyan theorem, and the last one from Proposition \ref{pr:v* sets}\ref{pr:v* sets:l1 op}. None of them can be reversed in general: for the first one, this is witnessed e.g.\ by the unit ball of reflexive, non-superreflexive Banach spaces; for the second one, consider a weakly Cauchy sequence that does not converge weakly; the last one is witnessed by any uncomplemented $\ell_1$ basis, by Proposition \ref{pr:v* sets}\ref{pr:v* sets:l1 basis}.

Certain Banach spaces do allow some of these implications to be reversed. This motivates the following definitions:

\begin{definition}
We say that a Banach space $X$ has...
\begin{itemize}
\item ...\emph{property (V$^*$)} if every (V$^*$)-set (with respect to $X$) is relatively weakly compact.
\item ...\emph{property (R)} if every weakly precompact set is RSWC; equivalently by \cite{Silber2}, if every relatively weakly compact set is RSWC.
\end{itemize}
\end{definition}

Property (V$^*$) is due to Pe\l czy\'nski \cite{Pel62}, while property (R) was recently introduced by Silber \cite{Silber}.
By \eqref{eq:set implications}, a Banach space has both properties (V$^*$) and (R) if and only if every (V$^*$)-set is RSWC. It follows from Fact \ref{fact:sets} that both properties are preserved by isomorphisms, by passing to closed subspaces, and by finite direct sums. It is easily established that, more generally, the $\ell_1$ sum of arbitrarily many spaces with property (V$^*$) has property (V$^*$) as well \cite[Corollary 2.6]{Emmanuele}. In particular, $\ell_1(\kappa)$ has property (V$^*$) for any cardinal $\kappa$.

These facts are already enough to show that $\lipfree{M}$ has property (V$^*$) for any uniformly discrete metric space $M$ (that is, such that $\inf_{x\neq y\in M}d(x,y)>0$). Indeed, by \cite[Proposition 4.3]{Kalton04}, $\lipfree{M}$ is isomorphic to a subspace of the $\ell_1$ sum of the spaces $\lipfree{B(0,2^n)}$, $n\in\ZZ$, each of which is isomorphic to some $\ell_1(\kappa)$ when $M$ is uniformly discrete. In particular, $\lipfree{M}$ has property (V$^*$) when $M$ is the integer grid in $c_0$, or an infinite dyadic tree. Theorem \ref{mth:locally compact p1u} proves a more general case.

\section{Compact and local determination}

The following concept was introduced implicitly in \cite{ANPP}.

\begin{definition}\label{def:compact determination}
We say that a Banach space property $\mathcal{P}$ is \emph{compactly determined} for a Lipschitz-free space $\lipfree{M}$ if $\lipfree{M}$ satisfies $\mathcal{P}$ whenever $\lipfree{K\cup\set{0}}$ satisfies $\mathcal{P}$ for all compact subsets $K\subset M$. If the space $\lipfree{M}$ is not specified, then this is assumed to hold for all Lipschitz-free spaces.
\end{definition}

Compactly determined properties include weak sequential completeness, the Schur property, the approximation property, the Dunford-Pettis property \cite{ANPP}, and the Radon-Nikod\'ym property \cite{AGPP}. We now introduce a weaker concept.

\begin{definition}\label{def:local determination}
We say that a Banach space property $\mathcal{P}$ is \emph{locally determined} for a Lipschitz-free space $\lipfree{M}$ if $\lipfree{M}$ satisfies $\mathcal{P}$ whenever every $x\in M$ has a neighborhood $U$ such that $\lipfree{\cl{U}\cup\set{0}}$ satisfies $\mathcal{P}$. Again, if the space $\lipfree{M}$ is not specified, then this is assumed to hold for all Lipschitz-free spaces.
\end{definition}

We will only be interested in ``nice'' properties, such as property (V$^*$), that behave well under standard Banach space operations. For the sake of clarity, let us fix a name for them.

\begin{definition}
We say that a Banach space property $\mathcal{P}$ is \emph{standard} if it is stable under isomorphism, finite direct sums and passing to closed subspaces.
\end{definition}

Note that any (non-null) standard property is automatically satisfied by all finite-dimensional Banach spaces.
Besides properties (V$^*$) and (R), weak sequential completeness and not containing $c_0$ are also standard properties.

Standard properties are particularly pleasant to deal with for Lipschitz-free spaces because they are stable under removing a single point from the metric space, making it unnecessary to specify whether the base point is included or not. Indeed, if $M$ is finite then $\lipfree{M}$ and $\lipfree{M\setminus\set{p}}$ are both finite-dimensional for any $p\in M$; on the other hand, if $M$ is infinite then $\lipfree{M}$ and $\lipfree{M\setminus\set{p}}$ are isomorphic by \cite[Lemma 2.8]{AACD3}. Consequently, for such properties we may simply write $\lipfree{K}$ and $\lipfree{\cl{U}}$ in Definitions \ref{def:compact determination} and \ref{def:local determination}. In some of the forthcoming arguments, we will state ``$\lipfree{K}$ has property $\mathcal{P}$'' and ``$\lipfree{K\cup\set{0}}$ has property $\mathcal{P}$'' interchangeably, and only include $0$ explicitly when it becomes essential to the argument.

\begin{lemma}
\label{lm:local to compact}
Let $\mathcal{P}$ be a standard Banach space property, and let $M$ be a metric space. Suppose that every $x\in M$ has a neighborhood $U$ such that $\lipfree{\cl{U}}$ satisfies $\mathcal{P}$. Then every compact $K\subset M$ has a neighborhood $U$ such that $\lipfree{\cl{U}}$ satisfies $\mathcal{P}$, and $\lipfree{K}$ satisfies $\mathcal{P}$ as well.
\end{lemma}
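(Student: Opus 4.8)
The plan is to use a compactness argument to pass from the pointwise hypothesis to a single neighborhood of $K$, and then exploit the fact that a Lipschitz-free space over a bounded neighborhood decomposes (up to the standard property $\mathcal{P}$) as something built from finitely many pieces each satisfying $\mathcal{P}$. First I would, for each $x\in K$, fix an open neighborhood $V_x$ with $\lipfree{\cl{V_x}}$ satisfying $\mathcal{P}$; shrinking $V_x$ if necessary, I may assume each $V_x$ is a ball $B(x,r_x)$ with $r_x$ small (in particular I can arrange $\cl{V_x}$ to be a fixed bounded set since $K$ is compact, but the radii are the key point). By compactness, finitely many of the balls $B(x_i, r_{x_i}/2)$, $i=1,\dots,N$, cover $K$. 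Set $r=\tfrac12\min_i r_{x_i}$; then $U=[K]_r$ is a neighborhood of $K$, and for every $y\in U$ there is some $i$ with $y\in B(x_i, r_{x_i})$, so $U\subset\bigcup_{i=1}^N B(x_i, r_{x_i})=:W$, a finite union of balls with $\lipfree{\cl{B(x_i,r_{x_i})}}$ satisfying $\mathcal{P}$.

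The main step is then to show that if $W=\bigcup_{i=1}^N A_i$ where each $\lipfree{\cl{A_i}}$ has the standard property $\mathcal{P}$, then $\lipfree{\cl{W}}$ has $\mathcal{P}$. The natural route is via a partition-of-unity / weighting-operator argument: choose Lipschitz functions $h_1,\dots,h_N:\cl{W}\to[0,1]$ with $\sum_i h_i\equiv 1$ on $\cl{W}$, $\supp(h_i)\subset\cl{A_i}$ (using a slight enlargement of the $A_i$ if needed so that the supports are contained in sets over which the free space still has $\mathcal{P}$ — here I would replace $r_{x_i}$ by $2r_{x_i}$ from the start, so that $h_i$ can be supported in $B(x_i, 2r_{x_i})\subset\cl{V_{x_i}}$ while still covering $[K]_r$). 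Since $\cl{W}$ is bounded, each $h_i$ has bounded support, so the weighting operators $W_{h_i}^*:\lipfree{\cl{W}}\to\lipfree{\cl{A_i}}$ from Section \ref{sec:lipfree} are well-defined and bounded. The map $m\mapsto (W_{h_1}^*m,\dots,W_{h_N}^*m)$ embeds $\lipfree{\cl{W}}$ into the finite direct sum $\bigoplus_{i=1}^N \lipfree{\cl{A_i}}$: it is injective because for $f\in\Lip_0(\cl{W})$ one has $\duality{f,m}=\sum_i\duality{fh_i,m}=\sum_i\duality{f, W_{h_i}^*m}$, so $m$ is determined by its images, and in fact this shows the map is a (not necessarily isometric) isomorphism onto its range. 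Since $\mathcal{P}$ is standard, the finite direct sum $\bigoplus_{i=1}^N\lipfree{\cl{A_i}}$ has $\mathcal{P}$, and hence so does its subspace, which is isomorphic to $\lipfree{\cl{W}}$. Finally, $\lipfree{\cl{U}}$ is (isometric to) a subspace of $\lipfree{\cl{W}}$ since $\cl{U}\subset\cl{W}$, so $\lipfree{\cl{U}}$ has $\mathcal{P}$; and $\lipfree{K}$ is a subspace of $\lipfree{\cl{U}}$, so it has $\mathcal{P}$ as well.

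The step I expect to be the main obstacle is the injectivity/isomorphism claim for the map into $\bigoplus_i\lipfree{\cl{A_i}}$ — specifically, making sure the weighting operators are compatible with viewing $\lipfree{\cl{A_i}}$ as a subspace of $\lipfree{\cl{W}}$ and that the identity $\duality{f,m}=\sum_i\duality{f,W_{h_i}^*m}$ is legitimate. This hinges on the defining property $\duality{f,W_h^*(m)}=\duality{fh,m}$ recalled in Section \ref{sec:lipfree}, applied with $N=\cl{A_i}\supseteq\supp(h_i)$, together with $\sum_i h_i=1$; one must check that $fh_i\in\Lip_0(\cl{A_i})$ extends (by zero) to an element of $\Lip_0(\cl{W})$ with the same pairing against $m$, which is exactly what the support condition $\supp(h_i)\subset\cl{A_i}$ and the bound on $\norm{W_{h_i}}$ provide. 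A minor technical point is handling the base point $0$: by the discussion preceding the lemma, since $\mathcal{P}$ is standard we may freely add or remove $0$, so there is no loss in assuming $0\in\cl{A_1}$ (say) and carrying it along, or equivalently in passing to $M\cup\{0\}$ throughout.
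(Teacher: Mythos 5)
Your proposal is correct and follows essentially the same route as the paper: reduce to a finite cover by compactness, pass to a suitable neighborhood of $K$ on which a Lipschitz partition of unity subordinate to the cover exists, and embed the free space into the finite direct sum $\bigoplus_i\lipfree{\cl{A_i}}$ via weighting operators, concluding by standardness of $\mathcal{P}$. The only difference is that the paper outsources this decomposition step to \cite[Lemmas 2.4 and 2.5]{AALMPPV1} (shrinking $U$ so that $\inf_{x\in\cl{U}}\sum_i d(x,M\setminus U_i)>0$, which is exactly what makes your partition of unity Lipschitz), whereas you reprove it inline; your identity $\duality{f,m}=\sum_i\duality{f,W_{h_i}^*m}$ indeed yields a bounded left inverse and hence an isomorphism onto the range, not just injectivity.
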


\begin{proof}
Let $K\subset M$ be compact. By assumption, we may find a finite open cover $U_1,\ldots, U_n$ of $K$ such that each $\lipfree{\cl{U}_i}$ has property $\mathcal{P}$ for $i=1,\ldots,n$. Since property $\mathcal{P}$ passes to closed subspaces, we may assume that all $U_i$ are bounded. Clearly, $d(x,M\setminus U_i)>0$ for every $x\in U_i$, thus
$$
\inf_{x\in K} \sum_{i=1}^n d(x,M\setminus U_i) > 0
$$
by the compactness of $K$. Hence, we may find a neighborhood $U$ of $K$ such that $\cl{U}\subset U_1\cup\ldots\cup U_n$ and moreover
$$
\inf_{x\in\cl{U}} \sum_{i=1}^n d(x,M\setminus U_i) > 0 .
$$
By \cite[Lemmas 2.4 and 2.5]{AALMPPV1}, $\lipfree{\cl{U}\cup\set{0}}$ is isomorphic to a complemented subspace of
$$
\bigoplus_{i=1}^n \lipfree{(\cl{U}_i\cap \cl{U}) \cup\set{0}} \subset \bigoplus_{i=1}^n \lipfree{\cl{U}_i \cup\set{0}} .
$$
Because property $\mathcal{P}$ is standard, it follows that $\lipfree{\cl{U}}$, and hence also $\lipfree{K}$, satisfy the property.
\end{proof}

The following is obvious in light of Lemma \ref{lm:local to compact}.

\begin{proposition}
\label{pr:local vs compact determination}
Let $\mathcal{P}$ be a standard Banach space property. If $\mathcal{P}$ is compactly determined, then it is locally determined. For Lipschitz-free spaces over locally compact metric spaces, the converse also holds.
\end{proposition}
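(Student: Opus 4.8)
The plan is to read off both implications directly from Lemma \ref{lm:local to compact}, which already contains the only substantive work (passing from a local hypothesis about neighborhoods to the corresponding statement about compact subsets, via the complementation result of \cite{AALMPPV1}). After that lemma, the proposition is a matter of chaining definitions.

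For the forward implication, I would assume $\mathcal{P}$ is compactly determined and take an arbitrary metric space $M$ for which every $x\in M$ has a neighborhood $U$ with $\lipfree{\cl{U}}$ satisfying $\mathcal{P}$. Lemma \ref{lm:local to compact} then gives that $\lipfree{K}$ satisfies $\mathcal{P}$ for every compact $K\subset M$, and compact determination upgrades this to $\lipfree{M}$ itself satisfying $\mathcal{P}$. Since $M$ was arbitrary, $\mathcal{P}$ is locally determined.

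For the converse, now under the extra assumption that $M$ is locally compact, I would assume $\mathcal{P}$ is locally determined for $\lipfree{M}$ and that $\lipfree{K}$ satisfies $\mathcal{P}$ for every compact $K\subset M$. Local compactness supplies, for each $x\in M$, a neighborhood $U$ whose closure $\cl{U}$ is compact; since $\cl{U}$ is then a compact subset of $M$, the hypothesis gives that $\lipfree{\cl{U}}$ satisfies $\mathcal{P}$. This is precisely the premise needed to invoke local determination, which yields that $\lipfree{M}$ satisfies $\mathcal{P}$. Hence $\mathcal{P}$ is compactly determined for $\lipfree{M}$.

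The only thing to be careful about is the treatment of the base point: one should note that $\mathcal{P}$ being \emph{standard} is exactly what allows $\lipfree{\cl{U}}$, $\lipfree{\cl{U}\cup\set{0}}$, $\lipfree{K}$ and $\lipfree{K\cup\set{0}}$ to be used interchangeably, as discussed after the definition of standard property (this is already implicit in the statement of Lemma \ref{lm:local to compact}). Beyond that, I do not expect any obstacle — the entire mathematical content sits in Lemma \ref{lm:local to compact}, and the proposition is genuinely immediate once that lemma is in hand.
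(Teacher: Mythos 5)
Your argument is correct and is exactly the deduction the paper has in mind: the authors simply remark that the proposition ``is obvious in light of Lemma \ref{lm:local to compact}'', and your write-up makes that chain of definitions explicit (forward direction via the lemma plus compact determination, converse via compact-closure neighborhoods supplied by local compactness). Your remark about the base point and the role of standardness matches the paper's own convention stated after the definition of a standard property.
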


\subsection{Property (V*) is locally determined}

This section is devoted to the proof of Theorem \ref{mth:locally determined}. We need to recall another concept, also from \cite{ANPP}.

\begin{definition}
A subset $\Gamma$ of a Lipschitz-free space $\lipfree{M}$ is \emph{tight} if for every $\varepsilon>0$ there exists a compact set $K\subset M$ such that $\Gamma\subset\lipfree{K}+\varepsilon B_{\lipfree{M}}$ (we assume tacitly that $0\in K$).
\end{definition}

That is, tight sets can be approximated uniformly from Lipschitz-free spaces over compact subsets of $M$. In fact, more is true: by \cite[Theorem 3.2]{ANPP}, the approximation can be linear, even a uniform limit of operators.
More precisely, for every $\varepsilon>0$ there is a compact set $K\subset M$ and a sequence $(T_n)$ of operators on $\lipfree{M}$ such that $(T_n(\gamma))$ converges, uniformly on $\gamma\in\Gamma$, to an element $T(\gamma)\in\lipfree{K}$ such that $\norm{T(\gamma)-\gamma}\leq\varepsilon$; clearly $T:\Gamma\to\lipfree{K}$ is a linear mapping.

Since weakly precompact sets are (V$^*$)-sets, the following statement generalizes \cite[Theorem 2.3]{ANPP}.

\begin{theorem}
\label{th:v*_tight}
(V$^*$)-sets in Lipschitz-free spaces are tight.
\end{theorem}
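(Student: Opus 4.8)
The plan is to show that every (V$^*$)-set $\Gamma\subset\lipfree{M}$ is tight, i.e. that for every $\varepsilon>0$ there is a compact $K\subset M$ (with $0\in K$) such that $\Gamma\subset\lipfree{K}+\varepsilon B_{\lipfree{M}}$. By Fact~\ref{fact:sets}\ref{fact:sets:countable} we may assume $\Gamma$ is countable, say $\Gamma=\set{\gamma_k}_k$; and by Fact~\ref{fact:sets}\ref{fact:sets:grothendieck} it is enough to find, for each $\varepsilon>0$, a (V$^*$)-set $\Gamma_\varepsilon$ and a compact $K$ with $\Gamma\subset\Gamma_\varepsilon+\varepsilon B$ and $\Gamma_\varepsilon\subset\lipfree{K}$; in fact, proceeding by contradiction is cleaner. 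So suppose $\Gamma$ is a (V$^*$)-set that is \emph{not} tight: there is $\varepsilon>0$ such that for every compact $K\subset M$ there is $\gamma\in\Gamma$ with $\dist(\gamma,\lipfree{K})>\varepsilon$. The goal is to extract from this failure of tightness a WUC series $\sum_n x_n^*$ in $\Lip_0(M)$ and elements $\gamma_n\in\Gamma$ with $\duality{x_n^*,\gamma_n}\geq\varepsilon/2$ (say), contradicting the (V$^*$)-set property.

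The main work is a standard ``spreading'' or ``gliding hump'' construction exploiting the local structure of Lipschitz-free spaces. Recursively build an increasing sequence of compact sets $K_1\subset K_2\subset\cdots$ (e.g.\ closed balls $B(0,r_n)$, or more carefully chosen finite unions of small balls so that successive ``annuli'' are metrically separated), together with $\gamma_n\in\Gamma$ such that $\dist(\gamma_n,\lipfree{K_n})>\varepsilon$, and such that the part of $\gamma_n$ living ``far from $K_n$'' is almost entirely concentrated in a bounded region that is metrically far from the corresponding region for $\gamma_m$, $m\neq n$. Concretely, using the representation \eqref{eq:sum of molecules} of each $\gamma_n$ as a norm-convergent sum of molecules and the fact that the supports of the molecules can be truncated, one arranges that there exist pairwise ``separated'' bounded sets $A_n\subset M$ and functions obtained via the weighting operators $W_{h_n}$ (with $h_n$ Lipschitz, supported near $A_n$, bounded by $1$, and with controlled Lipschitz constant) such that $W_{h_n}^*(\gamma_n)$ has norm close to that of the ``tail'' of $\gamma_n$, hence norm at least $\varepsilon/2$, while $W_{h_n}^*(\gamma_m)\approx 0$ for $m<n$. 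Choosing, for each $n$, a norming functional $f_n\in B_{\Lip_0}$ for $W_{h_n}^*(\gamma_n)$ and setting $x_n^*=W_{h_n}(f_n)$, one gets $\duality{x_n^*,\gamma_n}=\duality{f_n,W_{h_n}^*(\gamma_n)}\geq\varepsilon/2$.

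It then remains to check that $\sum_n x_n^*$ is WUC in $\Lip_0(M)$. Here one uses Fact~\ref{fact:wuc}\ref{w*uc}: it suffices to test against elements $m\in\lipfree{M}$. Since the supports of the $h_n$ are pairwise metrically separated and bounded, for any molecule $m_{xy}$ only boundedly many $x_n^*$ can give a non-negligible value $\duality{x_n^*,m_{xy}}$ (a ``short'' edge meets at most one of the separated regions, a ``long'' edge contributes little because each $W_{h_n}$ has controlled norm and the total contribution is summable against $d(x,y)$); passing to general $m$ via \eqref{eq:sum of molecules} and the bound $\sum a_k\leq\norm{m}+1$ gives $\sum_n\abs{\duality{x_n^*,m}}<\infty$. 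This contradicts the assumption that $\Gamma$ is a (V$^*$)-set, since $\sup_{\gamma\in\Gamma}\abs{\duality{x_n^*,\gamma}}\geq\varepsilon/2$ for all $n$, and completes the proof.

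The hard part is the bookkeeping in the recursive construction: one must simultaneously guarantee that the tails of the $\gamma_n$ are \emph{not} absorbed into the previously chosen compact sets (this is exactly what the failure of tightness provides), that the localizing regions $A_n$ can be taken metrically separated (which requires care, since the tail of $\gamma_n$ might a priori spread out all the way to infinity rather than sit in a bounded annulus — this is where one truncates the molecule series and argues that only finitely many molecules carry most of the mass, then discards the far-away ones at the cost of $\varepsilon$), and that the weighting operators $W_{h_n}$ have uniformly controlled norms so that the WUC estimate goes through. Everything else is routine manipulation of molecule representations and weighting operators.
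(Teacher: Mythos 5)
Your overall strategy -- negate tightness, run a gliding-hump construction producing functions with pairwise separated supports, observe that such a series is WUC, and contradict the (V$^*$)-property -- is exactly the strategy of the paper's proof. However, there is a genuine gap at the one point you flag as ``requiring care'': obtaining the metric separation of the localizing regions $A_n$. The negation of tightness only says that for every \emph{compact} $K$ some $\gamma\in\Gamma$ satisfies $\dist(\gamma,\lipfree{K})>\varepsilon$. If you set $K_{n}=K_{n-1}\cup\supp(m_{n-1})$, the ``new'' support points of $m_n$ may lie arbitrarily close to $K_n$ (the union $\bigcup_n K_n$ may even have compact closure), so there is no fixed $\delta>0$ separating the successive humps. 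Without a fixed $\delta$, your functions $h_n$ either fail to have disjoint supports, or, if you steepen them so that they vanish at distance $\delta_n$ from the previous supports with $\delta_n\to 0$, their Lipschitz constants blow up like $1/\delta_n$; in either case the series $\sum_n x_n^*$ is no longer WUC (recall Fact \ref{fact:wuc}\ref{wuc bound}: the terms of a WUC series must be norm-bounded). This is not bookkeeping -- it is the crux of the theorem.

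The paper resolves this in two separate stages. First, a radial reduction: if $\Gamma$ escapes every ball $B(0,R)$ by $\varepsilon$, one can choose radii $R_n>2R_{n-1}$ so the humps live in genuinely disjoint annuli and $1$-Lipschitz truncations suffice. Second, after cutting down to a bounded piece $B(0,2R)$ with a weighting operator, it invokes the characterization of tightness from \cite[Theorem 3.2]{ANPP}: it suffices to show that for all $\varepsilon,\delta>0$ there is a \emph{finite} set $E$ with $\Gamma\subset\lipfree{[E]_\delta}+\varepsilon B_{\lipfree{M}}$. Negating \emph{this} fixes the separation parameter $\delta$ once and for all: at each step the new mass of $m_n$ sits at distance $>\delta$ from all previous supports, and since everything lives in $B(0,2R)$ the Hahn--Banach functionals are bounded by $4R$ there, so their smallest positive $(4R/\delta)$-Lipschitz extensions from $\supp(m_n)$ have pairwise disjoint supports and uniformly bounded Lipschitz constants. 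Your proposal would need to import this finite-set/$\delta$-neighborhood reformulation (or prove an equivalent of \cite[Theorem 3.2]{ANPP}) before the hump construction can be carried out; as written, the construction cannot get started from the compact-set formulation of non-tightness alone.
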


\begin{proof}
Let $\Gamma$ be a (V$^*$)-set in a Lipschitz-free space $\lipfree{M}$. The proof that $\Gamma$ is tight will be an adaptation of \cite[Propositions 3.3 to 3.6]{ANPP}, which is itself an adaptation of Kalton's original \cite[Lemma 4.5]{Kalton04}. We start by proving the following Claim, which will help us reduce the theorem to the case where $M$ is bounded.

\begin{claim*}
For every $\varepsilon>0$ there is $R>0$ such that $\Gamma\subset\lipfree{B(0,R)}+\varepsilon B_{\lipfree{M}}$.
\end{claim*}

Assume otherwise that there is $\varepsilon>0$ such that for every $R>0$ we can find $\gamma\in\Gamma$ with $\dist(\gamma,\lipfree{B(0,R)})>\varepsilon$. Pick $R_0=1$ and, iteratively, once $R_{n-1}$ has been chosen, pick
\begin{itemize}
\item an element $\gamma_n\in\Gamma$ such that $\dist(\gamma_n,\lipfree{B(0,2R_{n-1})})>\varepsilon$,
\item an element $m_n\in\lipfree{M}$ with bounded (for instance, finite) support such that $\norm{m_n-\gamma_n}\leq 2^{-n}\varepsilon$,
\item and $R_n>\rad(\supp(m_n))$.
\end{itemize}
Note that $\dist(m_n,\lipfree{B(0,2R_{n-1})})>\varepsilon-2^{-n}\varepsilon\geq\varepsilon/2$ so, by the Hahn-Banach theorem, there exists $h_n\in B_{\Lip_0(M)}$ such that $h_n=0$ on $B(0,2R_{n-1})$ and $\duality{h_n,m_n}>\varepsilon/2$; in particular, $R_n>2R_{n-1}$. By replacing it with $h_n^+$ or $h_n^-$, we assume that $h_n\geq 0$ and $\abs{\duality{h_n,m_n}}>\varepsilon/4$ instead. Now let
$$
f_n(x)=\max\set{\min\set{h_n(x),2R_n-d(x,0)},0}
$$
for $x\in M$. Then $f_n\in B_{\Lip_0(M)}$ is positive, vanishes outside of $B(0,2R_n)$, and agrees with $h_n$ on $B(0,R_n)$, in particular on $\supp(m_n)$, therefore $\abs{\duality{f_n,m_n}}>\varepsilon/4$. It follows that, for every $x\in M$, $f_n(x)\neq 0$ for at most one value of $n$, therefore $\sum_nf_n$ is a WUC series in $\Lip_0(M)$ \cite[Lemma 6]{AP_normal}. But then, for every $n$
$$
\abs{\duality{f_n,\gamma_n}} \geq \abs{\duality{f_n,m_n}} - 2^{-n}\varepsilon > \frac{\varepsilon}{4}-2^{-n}\varepsilon
$$
contradicting the fact that $\Gamma$ is a (V$^*$)-set. This proves the Claim.

\medskip

Now we proceed with the main proof. By \cite[Theorem 3.2]{ANPP}, it is enough to show that for any $\varepsilon,\delta>0$ there exists a finite set $E\subset M$ such that $\Gamma\subset\lipfree{[E]_\delta}+\varepsilon B_{\lipfree{M}}$ (as before, we assume tacitly that $0\in E$). So fix $\varepsilon,\delta>0$. By the Claim, there is $R>0$ such that $\Gamma\subset\lipfree{B(0,R)}+\frac{\varepsilon}{5}B_{\lipfree{M}}$. Let $W=W_h^*:\lipfree{M}\to\lipfree{B(0,2R)}$ be the weighting operator given by \eqref{eq:weighting operator} for the function
$$
h(x) = \max\set{0,\min\set{1,2-\frac{1}{R}d(x,0)}} ,
$$
so that $\norm{W}\leq 3$ and $W$ is the identity on $\lipfree{B(0,R)}$. Then $W(\Gamma)$ is a (V$^*$)-set in $\lipfree{B(0,2R)}$ by Fact \ref{fact:sets}, and $\Gamma\subset W(\Gamma)+\frac{4}{5}\varepsilon B_{\lipfree{M}}$. Indeed, for every $\gamma\in\Gamma$ we can find $m\in\lipfree{B(0,R)}$ with $\norm{m-\gamma}\leq\frac{\varepsilon}{5}$ and thus
$$
\norm{\gamma-W\gamma} \leq \norm{\gamma-m} + \norm{m-Wm} + \norm{W(m-\gamma)} \leq 4\norm{m-\gamma} \leq \frac{4}{5}\varepsilon
$$
as $m=Wm$.

It now suffices to prove that there is a finite set $E\subset M$ such that $W(\Gamma)\subset\lipfree{[E]_\delta}+\frac{\varepsilon}{5} B_{\lipfree{M}}$. We will use a variation of the argument from the Claim to prove this. Assume otherwise, that is, that for every finite set $E\subset M$ there is $\gamma\in W(\Gamma)$ with $\dist(\gamma,\lipfree{[E]_\delta})>\frac{\varepsilon}{5}$. Pick $E_0=\set{0}$ and, iteratively, once $E_{n-1}$ has been chosen, pick
\begin{itemize}
\item an element $\gamma_n\in W(\Gamma)$ such that $\dist(\gamma_n,\lipfree{[E_{n-1}]_\delta})>\frac{\varepsilon}{5}$,
\item an element $m_n\in\lipfree{M}$ with finite support such that $\norm{m_n-\gamma_n}\leq 2^{-n}\frac{\varepsilon}{5}$,
\item and $E_n=E_{n-1}\cup\supp(m_n)$.
\end{itemize}
Note that $\dist(m_n,\lipfree{[E_{n-1}]_\delta})>\frac{\varepsilon}{5}-2^{-n}\frac{\varepsilon}{5}\geq\varepsilon/10$.
By the Hahn-Banach theorem, there exists $h_n\in B_{\Lip_0(M)}$ such that $\duality{h_n,m_n}>\varepsilon/10$ and $h_n$ vanishes on $[E_{n-1}]_\delta$. By replacing $h_n$ with either its positive or its negative part, we may assume that $h_n\geq 0$ and $\abs{\duality{h_n,m_n}}>\varepsilon/20$ instead. Now let
$$
f_n(x) = \max\set{0,\max\set{h_n(y)-\frac{4R}{\delta}d(x,y) \,:\, y\in\supp(m_n)}}
$$
for $x\in M$. That is, $f_n$ is the smallest positive $(4R/\delta)$-Lipschitz extension of $h_n$ from $\supp(m_n)$ to $M$. In particular, $0\leq f_n\leq h_n$ and so $f_n=0$ on $[E_{n-1}]_\delta$. Let us see that the $f_n$ have disjoint supports: if $k<n$ and $f_n(x)\neq 0$ then $x\notin [E_{n-1}]_\delta\supset [E_k]_\delta$. Therefore, for $y\in\supp(m_k)$ we have $d(x,y)>\delta$, thus
$$
h_k(y)-4R\delta^{-1}d(x,y)<4R-4R\delta^{-1}\delta=0
$$
and so $f_k(x)=0$. Since $\norm{f_n}\leq 4R/\delta$ for all $n$, it follows that $\sum_n f_n$ is a WUC series by \cite[Lemma 6]{AP_normal}. But we also have
$$
\abs{\duality{f_n,\gamma_n}} \geq \abs{\duality{f_n,m_n}} - 2^{-n}\frac{\varepsilon}{5}\cdot 4R\delta^{-1} > \frac{\varepsilon}{20} - 2^{-n}\tfrac{4}{5}\varepsilon R\delta^{-1}
$$
and therefore
$$
\limsup_{n\to\infty}\sup_{\gamma\in W(\Gamma)}\abs{\duality{f_n,\gamma}} \geq \frac{\varepsilon}{20}
$$
contradicting the fact that $W(\Gamma)$ is a (V$^*$)-set. This finishes the proof.
\end{proof}

The following is an immediate consequence of Theorem \ref{th:v*_tight}.

\begin{corollary}
(V$^*$)-sets in Lipschitz-free spaces are separable.
\end{corollary}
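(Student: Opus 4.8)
The plan is to deduce separability directly from tightness. Given a (V$^*$)-set $\Gamma\subset\lipfree{M}$, Theorem \ref{th:v*_tight} provides, for each $n\in\NN$, a compact set $K_n\subset M$ with $0\in K_n$ such that $\Gamma\subset\lipfree{K_n}+\tfrac1n B_{\lipfree{M}}$. I would then recall two elementary facts: each $\lipfree{K_n}$ is separable (a compact metric space is separable, and $\lipfree{N}$ is separable whenever $N$ is, being the closed linear span of the separable set $\delta(N)$), and each $\lipfree{K_n}$ embeds isometrically as a closed subspace of $\lipfree{M}$.

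Next I would set $Y := \cl{\lspan}\bigl(\bigcup_{n\in\NN}\lipfree{K_n}\bigr)\subset\lipfree{M}$. As the closed linear span of a countable union of separable sets, $Y$ is a separable closed subspace of $\lipfree{M}$, and by construction $\Gamma\subset Y+\tfrac1n B_{\lipfree{M}}$ for every $n$. The final step is the routine observation that $\bigcap_{n\in\NN}\bigl(Y+\tfrac1n B_{\lipfree{M}}\bigr)=\cl{Y}=Y$, which yields $\Gamma\subset Y$, and hence $\Gamma$ is separable.

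I do not anticipate any genuine obstacle here: all of the substance is already contained in Theorem \ref{th:v*_tight}, and the only mild point to be careful about is that one needs a \emph{countable} cofinal family of compact sets, which is obtained simply by applying tightness with $\varepsilon = 1/n$.
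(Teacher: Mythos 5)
Your argument is correct and is precisely the ``immediate consequence'' the paper has in mind: apply tightness with $\varepsilon=1/n$, note each $\lipfree{K_n}$ is separable, and conclude that $\Gamma$ lies in the closed (hence separable) set obtained in the limit. No issues.
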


The following improvement of \cite[Theorem 2.11]{ANPP}, communicated to us by A. Proch\'azka, also follows easily. It shows that the property ``not containing $X$'' is compactly determined for a large class of Banach spaces $X$ including e.g.\ $C([0,1])$.

\begin{corollary}\label{cr:compact determination subspace}
Let $X$ be a Banach space that contains no complemented copy of $\ell_1$. If $\lipfree{M}$ contains an isomorphic copy of $X$, then there exists a compact set $K\subset M$ such that $\lipfree{K}$ contains an isomorphic copy of $X$.
\end{corollary}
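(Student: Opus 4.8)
The plan is to feed the image of $B_X$ into Theorem~\ref{th:v*_tight}. Fix an isomorphic embedding $J\colon X\to\lipfree{M}$ and put $\Gamma=J(B_X)$. First I would argue that $\Gamma$ is a (V$^*$)-set with respect to $\lipfree{M}$; then I would use that (V$^*$)-sets are tight — in the strong, linear form recalled after the definition of tightness — to produce, for a small $\varepsilon>0$, a compact set $K\subset M$ and an operator $S\colon X\to\lipfree{K}$ with $\norm{S-J}\leq\varepsilon$; a routine perturbation argument then shows that $S$ is again an isomorphic embedding, so that $X$ embeds into $\lipfree{K}$.

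For the first step, note that the hypothesis on $X$ says precisely that $B_X$ is a (V$^*$)-set with respect to $X$: by Proposition~\ref{pr:v* sets}\ref{pr:v* sets:l1 basis}, if $B_X$ were not a (V$^*$)-set it would contain an equivalent $\ell_1$ basis $(x_n)$ with $\cl{\lspan}\{x_n\}$ complemented in $X$, and $\cl{\lspan}\{x_n\}$ would be a complemented copy of $\ell_1$ in $X$ — a contradiction. Since (V$^*$)-sets are mapped to (V$^*$)-sets by operators (Fact~\ref{fact:sets}\ref{fact:sets:d}), $\Gamma=J(B_X)$ is a (V$^*$)-set with respect to $\lipfree{M}$.

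For the second step I would apply Theorem~\ref{th:v*_tight} together with \cite[Theorem~3.2]{ANPP}: fixing $0<\varepsilon<\norm{J^{-1}}^{-1}$, there exist a compact set $K\subset M$ (with $0\in K$) and operators $(T_n)$ on $\lipfree{M}$ such that $T_n\gamma\to T\gamma$ uniformly for $\gamma\in\Gamma$, where $T\gamma\in\lipfree{K}$ and $\norm{T\gamma-\gamma}\leq\varepsilon$. Because the convergence is uniform over $\Gamma=J(B_X)$, the operators $T_n\circ J\colon X\to\lipfree{M}$ satisfy $\norm{T_nJ-T_mJ}=\sup_{\gamma\in\Gamma}\norm{T_n\gamma-T_m\gamma}\to 0$, so they converge in operator norm to some operator $S\colon X\to\lipfree{M}$; since $Sx=T(Jx)\in\lipfree{K}$ for $x\in B_X$ and $S$ is linear, $S$ in fact maps $X$ into $\lipfree{K}$, and $\norm{S-J}\leq\varepsilon$. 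Finally, $\norm{Sx}\geq\norm{Jx}-\varepsilon\norm{x}\geq(\norm{J^{-1}}^{-1}-\varepsilon)\norm{x}$ for all $x\in X$, so $S$ is bounded below, hence an isomorphic embedding of $X$ into $\lipfree{K}$.

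The only point requiring care is turning the a priori non-linear tightness approximation into a genuine operator valued in $\lipfree{K}$; this works because uniform convergence of $(T_n\gamma)$ over $\gamma\in J(B_X)$ is exactly operator-norm convergence of $(T_nJ)$, after which the argument reduces to the standard stability of isomorphic embeddings under small perturbations.
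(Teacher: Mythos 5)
Your proposal is correct and follows essentially the same route as the paper: show the image of the unit ball is a (V$^*$)-set, invoke Theorem~\ref{th:v*_tight} and \cite[Theorem 3.2]{ANPP} to get a linear, uniformly close approximation into some $\lipfree{K}$, and conclude by a small-perturbation argument that the approximating map is still an embedding. The only cosmetic difference is that you verify the (V$^*$)-property of $B_X$ in $X$ and push it forward through $J$, whereas the paper argues directly that $J(X)$ contains no copy of $\ell_1$ complemented in $\lipfree{M}$; both are valid.
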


\begin{proof}
Let $Y\subset\lipfree{M}$ be isomorphic to $X$. Note that $Y$ cannot contain a copy $Z$ of $\ell_1$ that is complemented in $\lipfree{M}$: if it did and $P$ was a projection of $\lipfree{M}$ onto $Z$, then $P\restrict_Y$ would witness that $Z$ is complemented in $Y$ as well. By Proposition \ref{pr:v* sets}, $B_Y$ is a (V$^*$)-set with respect to $\lipfree{M}$. Hence $B_Y$ is tight by Theorem \ref{th:v*_tight}. Thus, by \cite[Theorem 3.2]{ANPP}, there exist a compact $K\subset M$ and a sequence of bounded operators $T_n:\lipfree{M}\to\lipfree{M}$ that converge uniformly on $B_Y$ to a linear map $T:B_Y\to\lipfree{K}$ such that $\norm{y-Ty}\leq\frac{1}{2}$ for all $y\in B_Y$. Because $T_n\restrict_{B_Y}$ is continuous for each $n$, $T$ is also continuous, and thus can be extended to an operator $T:Y\to\lipfree{K}$. Since $\norm{Ty}\geq\frac{1}{2}$ for all $y\in S_Y$, $T$ is an isomorphism of $Y$ onto $T(Y)\subset\lipfree{K}$.
\end{proof}

We are now ready to prove local determination for property (V$^*$). In other words: $\lipfree{M}$ fails property (V$^*$) if and only if there exists a point $x\in M$ such that $\lipfree{B(x,r)}$ fails property (V$^*$) for every $r>0$.

\begin{customtheorem}{\ref{mth:locally determined}}
Property (V$^*$) is locally determined for Lipschitz-free spaces.
\end{customtheorem}

\begin{proof}
Let $\lipfree{M}$ be a Lipschitz-free space, and fix a (V$^*$)-set $\Gamma$ (with respect to $\lipfree{M}$). We wish to prove that, under appropriate hypotheses, $\Gamma$ is relatively weakly compact. By Grothendieck's lemma \cite[Lemma II.C.7]{Wojt} it is enough to show that, for any given $\varepsilon>0$, there exists a relatively weakly compact set $\Gamma_\varepsilon\subset\lipfree{M}$ such that $\Gamma\subset\Gamma_\varepsilon+\varepsilon B_{\lipfree{M}}$.

Theorem \ref{th:v*_tight} shows that $\Gamma$ is tight. Thus, by \cite[Theorem 3.2]{ANPP}, there exist a compact $K\subset M$, a sequence of bounded operators $T_k:\lipfree{M}\to\lipfree{M}$, and a linear mapping $T:\Gamma\to\lipfree{K}$ such that $\norm{\gamma-T\gamma}\leq\varepsilon$ for all $\gamma\in\Gamma$ and $T_k\gamma$ converges to $T\gamma$ uniformly on $\gamma\in\Gamma$. We may assume that $0\in K$. Note that $T_k(\Gamma)$ is a (V$^*$)-set with respect to $\lipfree{M}$ for every $k$ according to Fact \ref{fact:sets}. 

Now let $U$ be a neighborhood of $K$. We claim that $T(\Gamma)$ is a (V$^*$)-set with respect to $\lipfree{\cl{U}}$. Indeed, take a WUC series $\sum_nf_n$ in $\Lip_0(\cl{U})$, then $\Lip(f_n)$ is bounded, say by $C>0$.  Denote $r=d(K,M\setminus U)$ and consider the Lipschitz function $h:M\to\RR$ defined by
$$
h(p)=\max\{1-r^{-1}d(p,K),0\}
$$
for every $p\in M$.
Let $W=W_h:\Lip_0(\overline{U})\to\Lip_0(M)$ be the weighting operator associated to $h$ given by \eqref{eq:weighting operator}, then $\sum_n W(f_n)$ is a WUC series in $\Lip_0(M)$. Fix $\delta>0$, then choose $k$ such that $\norm{T_k\gamma-T\gamma}\leq\delta/C\norm{W}$ for all $\gamma\in\Gamma$ and, since $T_k(\Gamma)$ is a (V$^*$)-set with respect to $\lipfree{M}$, further choose $N$ such that $\abs{\duality{W(f_n),T_k\gamma}}\leq\delta$ for all $n\geq N$ and $\gamma\in\Gamma$. Then, for such $n$ and $\gamma$ we have
\begin{align*}
\abs{\duality{W(f_n),T\gamma}} &\leq \abs{\duality{W(f_n),T_k\gamma}}+\abs{\duality{W(f_n),T\gamma-T_k\gamma}} \\
&\leq \delta+C\norm{W}\frac{\delta}{C\norm{W}} = 2\delta.
\end{align*}
Because $\supp(T\gamma)\subset K$ for every $\gamma\in\Gamma$ and $W(f_n)\restrict_K=f_n\restrict_K$, we have $\abs{\duality{f_n,T\gamma}}=\abs{\duality{W(f_n),T\gamma}}\leq 2\delta$. So, $T(\Gamma)$ is a (V$^*$)-set with respect to $\lipfree{\cl{U}}$ as claimed.

Finally, we apply our hypotheses. Suppose that every $x\in M$ has a neighborhood $U$ such that $\lipfree{\cl{U}}$ has property (V$^*$). Then by Lemma \ref{lm:local to compact} we may choose a neighborhood $U$ of $K$ such that $\lipfree{\cl{U}}$ has property (V$^*$). Since $T(\Gamma)$ is a (V$^*$)-set with respect to $\lipfree{\cl{U}}$, it must be relatively weakly compact, and so we take $\Gamma_\varepsilon=T(\Gamma)$ to finish the proof.
\end{proof}

When $M$ is discrete, each singleton is a neighborhood of itself, so the following particular case is immediate.

\begin{corollary}
\label{cr:discrete}
If $M$ is discrete, then $\lipfree{M}$ has property (V$^*$).
\end{corollary}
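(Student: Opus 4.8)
The plan is to apply Theorem \ref{mth:locally determined} directly, so the only work is to identify suitable neighborhoods. Since $M$ is discrete, every point $x\in M$ is isolated, so the singleton $U=\set{x}$ is an open neighborhood of $x$; being a single point of a metric space, it is also closed, hence $\cl{U}=\set{x}$. Consequently $\cl{U}\cup\set{0}$ has at most two elements, and $\lipfree{\cl{U}\cup\set{0}}$ is a finite-dimensional (indeed at most one-dimensional) Banach space.

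It remains to recall that every finite-dimensional Banach space has property (V$^*$). This is immediate because property (V$^*$) is standard, and, as noted right after the definition of standard properties, every non-null standard property is satisfied by all finite-dimensional spaces; alternatively, one can argue directly, since in a finite-dimensional space every bounded set is relatively weakly compact while every (V$^*$)-set is bounded by Fact \ref{fact:sets}\ref{fact:sets:a}. Either way, $\lipfree{\cl{U}\cup\set{0}}$ has property (V$^*$) for the neighborhood $U=\set{x}$ of each $x\in M$, and Theorem \ref{mth:locally determined} then gives that $\lipfree{M}$ has property (V$^*$).

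I do not expect any genuine obstacle here: once Theorem \ref{mth:locally determined} is in hand, the statement is an immediate special case, as all the substantive content (tightness of (V$^*$)-sets, the transfer of the (V$^*$)-set property to a Lipschitz-free space over a neighborhood of a compact set, and the reduction via Lemma \ref{lm:local to compact}) is already contained in its proof.
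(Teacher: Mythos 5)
Your proposal is correct and follows exactly the paper's route: the paper derives the corollary from Theorem \ref{mth:locally determined} by observing that each singleton is a neighborhood of itself in a discrete space, so that $\lipfree{\cl{U}\cup\set{0}}$ is finite-dimensional and trivially has property (V$^*$). Your additional remarks on why finite-dimensional spaces have the property are fine but not needed beyond what the paper already notes after the definition of standard properties.
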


\subsection{Sufficient conditions for compact determination}

While we cannot establish whether property (V$^*$) is compactly determined in general, we can pinpoint several situations where it is. For instance, by Proposition \ref{pr:local vs compact determination}, property (V$^*$) is compactly determined for $\lipfree{M}$ when $M$ is locally compact. One case where we can establish property (V$^*$) for $\lipfree{K}$, with $K$ compact, is when $M$ (and hence also $K$) is purely 1-unrectifiable. Indeed, if $K$ is compact and purely 1-unrectifiable then $\lipfree{K}$ is L-embedded \cite[Theorem 6.5 and Proposition 6.2]{APS2}, so it has property (V$^*$) by a result of Pfitzner \cite{Pfitzner_v*}.
Alternatively, $\lipfree{K}$ is the dual of an M-embedded Banach space (see \cite[Theorem 3.2 and the start of Section 3.1]{AGPP}) and thus has property (V$^*$) by the results in \cite[p. 129]{HWW}. Thus we get the following strenghtening of Corollary \ref{cr:discrete}.

\begin{customtheorem}{\ref{mth:locally compact p1u}}
If $M$ is locally compact and purely 1-unrectifiable, then $\lipfree{M}$ has property (V$^*$).
\end{customtheorem}

Besides discrete spaces, this holds when $M$ is a snowflake of a locally compact metric space.

The following proposition collects other technical conditions that are sufficient for compact determination of property (V$^*$).

\begin{proposition}\label{pr:sufficient conditions}
Suppose that $M$ satisfies at least one of the following conditions:
\begin{enumerate}[label={\upshape{(\alph*)}}]
\item\label{diamond} For every compact set $K\subset M$ and subset $\Gamma\subset\lipfree{K}$, the following holds:\\ if $\Gamma$ is a (V$^*$)-set with respect to $\lipfree{\cl{U}}$ for every open set $U\supset K$, then it is also a (V$^*$)-set with respect to $\lipfree{L}$ for some compact $L\supset K$.
\item\label{wuc extension} For every compact $K\subset M$ and every WUC series $\sum_nf_n$ in $\Lip_0(K)$ there exists a WUC series $\sum_ng_n$ in $\Lip_0(M)$ such that $g_n\restrict_K=f_n$.
\item\label{l1 extension} For every compact $K\subset M$, every Lipschitz mapping $\varphi:K\to\ell_1$ can be extended to a Lipschitz mapping $\Phi:M\to\ell_1$ such that $\Phi\restrict_K=\varphi$.
\item\label{extension} For every compact $K\subset M$ there exists a linear extension operator $E:\Lip_0(K)\to\Lip_0(M)$ such that $E(f)\restrict_K=f$ for all $f\in\Lip_0(K)$.
\item\label{retract} Every compact subset of $M$ is contained in a compact Lipschitz retract of $M$.
\end{enumerate}
Then property (V$^*$) is compactly determined for $\lipfree{M}$.
\end{proposition}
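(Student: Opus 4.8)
The plan is to route all five conditions through the machinery assembled for the proof of Theorem~\ref{mth:locally determined}, with \ref{diamond} as the common interface. That proof establishes, for an arbitrary (V$^*$)-set $\Gamma$ in $\lipfree{M}$ and any $\varepsilon>0$: since $\Gamma$ is tight (Theorem~\ref{th:v*_tight}), by \cite[Theorem~3.2]{ANPP} there are a compact $K\subset M$ with $0\in K$ and a linear map $T\colon\Gamma\to\lipfree{K}$ with $\norm{\gamma-T\gamma}\leq\varepsilon$ for all $\gamma\in\Gamma$, such that $T(\Gamma)$ is a (V$^*$)-set with respect to $\lipfree{\cl{U}}$ for \emph{every} neighbourhood $U$ of $K$. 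If \ref{diamond} holds, then, applied to $T(\Gamma)\subset\lipfree{K}$, it produces a compact $L\supset K$ with $T(\Gamma)$ a (V$^*$)-set with respect to $\lipfree{L}$; under the compact-determination hypothesis $\lipfree{L}$ has property (V$^*$), so $T(\Gamma)$ is relatively weakly compact in $\lipfree{L}$, hence in $\lipfree{M}$. As $\Gamma\subset T(\Gamma)+\varepsilon B_{\lipfree{M}}$ with $\varepsilon$ arbitrary, Grothendieck's lemma (Fact~\ref{fact:sets}\ref{fact:sets:grothendieck}) gives that $\Gamma$ is relatively weakly compact. Thus \ref{diamond} implies the conclusion, and it remains to deduce \ref{diamond} from each of \ref{wuc extension}, \ref{l1 extension}, \ref{extension}, \ref{retract}.

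To that end I would isolate a transfer principle: let $K\subset L$ be closed subsets of $M$ with $K$ compact and $0\in K$, let $\Gamma\subset\lipfree{K}$ be a (V$^*$)-set with respect to $\lipfree{\cl{U}}$ for some neighbourhood $U$ of $K$, and suppose that every WUC series $\sum_n f_n$ in $\Lip_0(L)$ admits a WUC series $\sum_n h_n$ in $\Lip_0(\cl{U})$ with $h_n\restrict_K=f_n\restrict_K$; then $\Gamma$ is a (V$^*$)-set with respect to $\lipfree{L}$. This is immediate: for $\gamma\in\Gamma$ both $\duality{f_n,\gamma}$ (paired in $\lipfree{L}$) and $\duality{h_n,\gamma}$ (paired in $\lipfree{\cl{U}}$) equal the value of $\gamma$ on $f_n\restrict_K=h_n\restrict_K\in\Lip_0(K)$, since $\lipfree{K}$ embeds in both $\lipfree{L}$ and $\lipfree{\cl{U}}$ dually to restriction of Lipschitz functions to $K$; and this value tends to $0$ uniformly on $\Gamma$ because $\sum_n h_n$ is WUC in $\Lip_0(\cl{U})=\lipfree{\cl{U}}^*$ and $\Gamma\subset\lipfree{\cl{U}}$. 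The hypothesis of \ref{diamond} furnishes, for each open $U\supset K$, precisely such a $\Gamma$.

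With the transfer principle in hand, the four implications fall out quickly. Condition \ref{wuc extension} supplies it with $L=K$: extend $\sum_n f_n$ from $\Lip_0(K)$ to a WUC series $\sum_n g_n$ in $\Lip_0(M)$ and set $h_n=g_n\restrict_{\cl{U}}$, using that restriction $\Lip_0(M)\to\Lip_0(\cl{U})$ is an operator, hence preserves WUC series by Fact~\ref{fact:wuc}\ref{wuc operator}. Condition \ref{extension} implies \ref{wuc extension}, as an extension operator $E\colon\Lip_0(K)\to\Lip_0(M)$ maps $\sum_n f_n$ to a WUC series $\sum_n E(f_n)$ extending it. Condition \ref{l1 extension} also implies \ref{wuc extension}, via the dictionary between WUC series in $\Lip_0(N)=\lipfree{N}^*$ and Lipschitz maps $N\to\ell_1$ vanishing at $0$: by Fact~\ref{fact:wuc}\ref{wuc l1} a WUC series $\sum_n f_n$ in $\Lip_0(N)$ corresponds to an operator $S\colon\lipfree{N}\to\ell_1$ with $f_n=e_n\circ S$, i.e.\ to the Lipschitz map $\varphi=(f_n)_n\colon N\to\ell_1$, $\varphi(x)=S\delta(x)$, and conversely; so, given a WUC series in $\Lip_0(K)$, I would extend the associated $\varphi\colon K\to\ell_1$ to a Lipschitz $\Phi=(g_n)_n\colon M\to\ell_1$ by \ref{l1 extension} (with $\Phi(0)=\varphi(0)=0$ since $0\in K$) and read off $\sum_n g_n$, a WUC series in $\Lip_0(M)$ with $g_n\restrict_K=f_n$. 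Finally, under \ref{retract} I would choose a compact Lipschitz retract $L\supseteq K$ of $M$ with retraction $\rho\colon M\to L$; then $f\mapsto f\circ\rho$ is an extension operator $\Lip_0(L)\to\Lip_0(M)$, and since $\rho$ is the identity on $K$, putting $h_n=(f_n\circ\rho)\restrict_{\cl{U}}$ applies the transfer principle with this $L$, giving \ref{diamond}.

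The one delicate point is the very first step: one must extract from the proof of Theorem~\ref{mth:locally determined} not merely that $T(\Gamma)$ is a (V$^*$)-set with respect to $\lipfree{M}$, but the sharper fact that it is a (V$^*$)-set with respect to $\lipfree{\cl{U}}$ for \emph{every} neighbourhood $U$ of the compact set $K$ --- a set we do not get to choose. Condition \ref{diamond} is tailored exactly to that output, which is also why \ref{retract} must be phrased as ``every compact set is contained in a compact retract'' rather than ``every compact set is a retract''. Everything else is routine bookkeeping with the embeddings $\lipfree{N}\hookrightarrow\lipfree{M}$ and with Fact~\ref{fact:wuc}.
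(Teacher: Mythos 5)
Your proof is correct and follows essentially the same route as the paper's: condition \ref{diamond} is the hub, reached by rerunning the proof of Theorem \ref{mth:locally determined} up to its last paragraph, and conditions \ref{wuc extension}--\ref{retract} are reduced to \ref{diamond} (with \ref{l1 extension} and \ref{extension} passing through \ref{wuc extension}). The only cosmetic difference is in \ref{retract}, where the paper works on the free-space side, using the linearized retraction $\widehat{\pi}\colon\lipfree{M}\to\lipfree{L}$ as a projection together with Fact \ref{fact:sets}\ref{fact:sets:d}, whereas you work dually with the composition operator $f\mapsto f\circ\rho$ on the Lipschitz side and feed it into your transfer principle; the two arguments are adjoint to one another.
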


\begin{proof}
Suppose that $\lipfree{K}$ has property (V$^*$) for every compact $K\subset M$. Assume first that \ref{diamond} holds. We follow the proof of Theorem \ref{mth:locally determined} up until the last paragraph. At that point, we have that $T(\Gamma)\subset\lipfree{K}$ is a (V$^*$)-set with respect to $\lipfree{\cl{U}}$ for every open $U\supset K$. By condition \ref{diamond}, we deduce the existence of a compact $L\supset K$ such that $T(\Gamma)$ is a (V$^*$)-set with respect to $\lipfree{L}$. Since $\lipfree{L}$ has property (V$^*$), $T(\Gamma)$ is relatively weakly compact. The proof that $\lipfree{M}$ has property (V$^*$) is now finished as in Theorem \ref{mth:locally determined}.

Next, we show that \ref{wuc extension} implies \ref{diamond}. Suppose that \ref{wuc extension} holds and $K\subset M$ is compact. Let $\sum_nf_n$ be a WUC series in $\Lip_0(K)$ and extend it to a WUC series $\sum_ng_n$ in $\Lip_0(M)$. Given any $\gamma\in\lipfree{K}$, we have $\duality{g_n,\gamma}=\duality{f_n,\gamma}$ as $f_n$ and $g_n$ agree on $K$. Thus, if $\Gamma\subset\lipfree{K}$ is a (V$^*$)-set with respect to $\lipfree{M}$, then
$$
\lim_{n\to\infty}\sup_{\gamma\in\Gamma}\abs{\duality{f_n,\gamma}} = \lim_{n\to\infty}\sup_{\gamma\in\Gamma}\abs{\duality{g_n,\gamma}} = 0
$$
so $\Gamma$ is a (V$^*$)-set with respect to $\lipfree{K}$ as well, and \ref{diamond} holds with $L=K$.

Notice that \ref{wuc extension} and \ref{l1 extension} are equivalent. Indeed, by Fact \ref{fact:wuc}\ref{wuc l1}, WUC series in $\Lip_0(M)$ correspond exactly to operators $T:\lipfree{M}\to\ell_1$, and these, in turn, correspond exactly to Lipschitz mappings $\varphi:M\to\ell_1$ (such that $\varphi(0)=0$) by the linearization property of Lipschitz-free spaces.

If \ref{extension} holds and $\sum_nf_n$ is a WUC series in $\Lip_0(K)$ then $\sum_n E(f_n)$ is a WUC series in $\Lip_0(M)$ by Fact \ref{fact:wuc}\ref{wuc operator}, so \ref{wuc extension} holds.

Finally, we show that \ref{retract} implies \ref{diamond}. Assume that \ref{retract} holds and let $K\subset M$ be compact, then there exist a compact $L\supset K\cup\set{0}$ and a Lipschitz retraction $\pi:M\to L$. Its linearization $\widehat{\pi}:\lipfree{M}\to\lipfree{L}$ is then a projection onto $\lipfree{L}$, as $\widehat{\pi}(\delta(x))=\delta(\pi(x))=\delta(x)$ for $x\in L$. Thus, if $\Gamma\subset\lipfree{K}$ is a (V$^*$)-set with respect to $\lipfree{M}$, then $\Gamma=\widehat{\pi}(\Gamma)$ is a (V$^*$)-set with respect to $\lipfree{L}$ by Fact \ref{fact:sets}. Thus \ref{diamond} holds.
\end{proof}

It is conceivable that condition \ref{diamond} holds for every metric space $M$, even with $L=K$, but we do not know if this is true. If so, this would imply that property (V$^*$) is always compactly determined.

On the other hand, condition \ref{l1 extension} (and hence \ref{wuc extension}) does not hold in general, it fails e.g.\ for $M=\ell_1$. Indeed, by \cite[Corollary D.2]{Makarychev}, there exist a sequence $(F_n)$ of finite subsets of $\ell_1$ and $1$-Lipschitz mappings $\varphi_n:F_n\to\ell_1$ that admit no extension $\Phi_n:\ell_1\to\ell_1$ with $\Lip(\Phi_n)\leq n$. It is then straightforward to combine them into a compact $K\subset\ell_1$ and a Lipschitz mapping $\varphi:K\to\ell_1$ that admits no Lipschitz extension $\Phi:\ell_1\to\ell_1$.

Consequently, extension operators as in \ref{extension} do not always exist. However, using the extension results due to Naor and Silberman \cite{NS}, it is shown in \cite[Lemma 3.2]{FG} that such an operator exists whenever $K\subset M$ has finite Nagata dimension. Therefore, property (V$^*$) is compactly determined for any $M$ whose compact subsets have finite Nagata dimension, e.g.\ doubling spaces or ultrametric spaces. We refer the reader to \cite{LS} for the definition and basics of Nagata dimension, and to \cite{AmPu,BB,LN} for more related work on extension operators.

Another suitable application of compact determination of $\lipfree{M}$ is when $M$ is (a subset of) a superreflexive Banach space, as it is known that $\lipfree{K}$ then has property (V$^*$) for all compact $K\subset M$ \cite[Corollary 11]{KP}. Local compactness would then imply property (V$^*$) for $\lipfree{M}$ by Proposition \ref{pr:local vs compact determination}, however superreflexive spaces are of course locally compact only when they are finite-dimensional, in which case they are proper and \cite{KP} already yields the full result. In the infinite-dimensional case, we can use condition \ref{retract} to solve the Hilbert case.

\begin{customtheorem}{\ref{mth:hilbert v*}}
If $X$ is a Hilbert space then $\lipfree{X}$ has property (V$^*$).
\end{customtheorem}

\begin{proof}
Note that condition \ref{retract} holds in Hilbert spaces: if $K\subset X$ is compact then $L=\cl{\conv}\,K$ is compact and convex, so there exists a non-expansive retraction $\pi\colon X\to L$, namely the nearest point projection. Thus property (V$^*$) is compactly determined for $\lipfree{X}$ by Proposition \ref{pr:sufficient conditions}. By \cite[Corollary 11]{KP}, $\lipfree{K}$ has property (V$^*$) for every compact $K\subset X$, so the theorem follows.
\end{proof}

Kopeck\'a proved in \cite[Proposition 3.3]{Kopecka07} that property \ref{retract} does not hold in the spaces $\ell_p$, $p\in (1,\infty)\setminus\set{2}$ for 1-Lipschitz retracts. We do not know whether it holds for Lipschitz retracts without a restriction on the Lipschitz constant; if it does, that would be enough to conclude that $\lipfree{\ell_p}$ has property (V$^*$).

\begin{remark}
During the review process of this paper, William B. Johnson communicated privately that property \ref{retract} holds in $\ell_p$ spaces for $p \in [1,\infty)$. From this result, Proposition \ref{pr:sufficient conditions} and \cite[Corollary 11]{KP}, it would follow that $\mathcal{F}(\ell_p)$ has property (V$^*$) for $p \in (1,\infty)$.
\end{remark}

\section{Lipschitz-free spaces over Carnot-Carath\'eodory spaces}

In this section, we provide another family of metric spaces whose Lipschitz-free spaces have property (V$^*$), namely certain Carnot-Carath\'eodory spaces. In \cite{Bour1,Bour2}, Bourgain identified a special class of subspaces of $C(K,E)$ -- the space of continuous functions on a compact Hausdorff space $K$ with values in a finite-dimensional Banach space $E$ -- whose duals have property (V$^*$). An important particular case is that $C^1([0,1]^n)^*$ has property (V$^*$) (see \cite[Theorem III.D.31 and Examples III.D.30]{Wojt}). Bourgain's argument was adapted to Lipschitz-free spaces in \cite{KP}, where it was shown that Lipschitz-free spaces over compact subsets of superreflexive Banach spaces have property (V$^*$). One of the properties of superreflexive spaces that was key to enabling such adaptation is the existence of uniform approximations of Lipschitz functions on a superreflexive space $X$ by $C^1$-smooth functions preserving the Lipschitz constant up to a multiplicative constant depending only on the space $X$  \cite[Corollary 8]{HJ}. If we want to apply similar methods in other metric spaces, suitable candidates therefore appear to be doubling metric measure spaces supporting a Poincar\'e inequality that have a rich curve structure and admit approximations of Lipschitz functions by smooth Lipschitz functions with continuous upper gradients and controlled Lipschitz constants. We will specify this setting in the next section.

\subsection{Carnot--Carath\'eodory spaces and upper gradients}

The main references for the information presented in this section are \cite{HK,HKST}.

Recall that given any metric space $M$, a continuous map $\gamma\colon[a,b]\to M$, where $a\leq b\in \RR$, is called a \textit{curve} in $M$. If $p,q\in M$ and $\gamma\colon [a,b]\to M$ is a 1-Lipschitz curve such that $\gamma(a)=p$, $\gamma(b)=q$, and $b-a=d(p,q)$, then $\gamma$ is actually an isometric embedding of $[a,b]$ into $M$. The mapping $\gamma$ is then called a \emph{geodesic (curve)}
joining $p$ and $q$. A metric space is termed \emph{geodesic} if every two points can be joined by a geodesic.

Fix $d,s\in\NN$. Let $G\subset(\RR^d,\enorm{\cdot}{d})$ be an open connected set and let $X=(X_1,\ldots,X_s)$ be a family of vector fields on $G$ with real locally Lipschitz continuous coefficients. We say that a curve $\gamma\colon[a,b]\to G$ is \textit{admissible} (with respect to $X$) if it is absolutely continuous and there exist Borel functions $c_1,\ldots,c_s\colon[a,b]\to\RR$ such that $\sum_{j=1}^s c_j(t)^2\leq 1$ for all $t\in[a,b]$ and
$$\gamma'(t)=\sum_{j=1}^s c_j(t)X_j(\gamma(t))$$
for almost every $t\in[a,b]$.

\begin{definition}
     Let $G$ and $X$ be as above. Then for every $x,y\in G$ we define $d_{cc}(x,y)$ to be the infimum of $b\geq 0$ such that there exists an admissible curve $\gamma\colon[0,b]\to G$ satisfying $\gamma(0)=x$ and $\gamma(b)=y$, and $d_{cc}(x,y)=\infty$ if no such admissible curve exists. The function $d_{cc}$ is called the \textit{Carnot-Carath\'eodory distance} on $G$ associated with $X$.
\end{definition}
Note that if $d_{cc}(x,y)$ is finite for every $x,y\in G$, i.e.\ every two points in $G$ are connected by an admissible curve, then $d_{cc}$ is a metric on $G$ called the Carnot-Carath\'eodory metric (see e.g.\ \cite[Proposition 11.2]{HK}). Such pair $(G,d_{cc})$ is referred to as \textit{Carnot-Carath\'eodory metric space}. By \cite[Proposition 11.2]{HK} again, we also have that the identity $(G,d_{cc})\to(G,\enorm{\cdot}{d})$ is continuous. Every admissible curve in $G$ is clearly 1-Lipschitz with respect to $d_{cc}$, hence rectifiable and compact, and admits an arc-length parametrization that is again 1-Lipschitz with respect to $d_{cc}$ \cite[Proposition 5.1.8]{HKST} and therefore also admissible \cite[Proposition 11.4]{HK}.
It follows that the distance between two points in $(G,d_{cc})$ equals the infimum of the lengths of the admissible curves joining them, so $(G,d_{cc})$ is a length space \cite[Proposition 11.5]{HK}.

Another important feature of Carnot-Carath\'eodory metric spaces is that they admit a linear differential structure. Indeed, we first recall the notion of an upper gradient.

\begin{definition}
Let $M$ be a metric space, $\Omega\subset M$ an open subset, and $f\colon \Omega\to\RR$ an arbitrary function. We say that a Borel function $g\colon \Omega\to[0,\infty]$ is an \textit{upper gradient} of the function $f$ on $\Omega$ if for every $1$-Lipschitz curve $\gamma\colon[a,b]\to\Omega$ we have
$$|f(\gamma(b))-f(\gamma(a))|\leq\int_a^bg(\gamma(t))\,dt.$$
\end{definition}
If $X=(X_1,\ldots,X_s)$ is the family of vector fields on $G$ defining the Carnot-Carath\'eodory metric $d_{cc}$ on $G$, and $f\in C^{\infty}(\Omega,\enorm{\cdot}{d})$ for some open $\Omega\subset G$, then we define the action of each vector field $X_j$ on $f$ as
$$X_j(f)(x)=\duality{X_j(x),\nabla f(x)}$$
for every $1\leq j\leq s$ and $x\in\Omega$. We denote 
\begin{equation}
    \label{eq:definition_gradient}
X(f)=(X_1(f),\ldots,X_s(f)),
\end{equation}
so $X(f)\colon\Omega\to\RR^s$. Note that the mapping $f\mapsto X(f)$ is linear and satisfies
\begin{equation}
    \label{eq:product_rule}
    X(fg)=fX(g)+X(f)g
\end{equation}
for every $f,g\in C^{\infty}(\Omega,\enorm{\cdot}{d})$. Moreover, we have the following.

\begin{proposition}[{\cite[Proposition 11.6]{HK}}]
\label{prop:upper_gradient}
Let $(G,d_{cc})$ be a Carnot-Carath\'eodory metric space and $\Omega\subset G$ an open subset. If $X$ is the family of vector fields defining $d_{cc}$ and $f\in C^\infty(\Omega,\enorm{\cdot}{d})$, then $\enorm{X(f)}{s}$ is an upper gradient of $f$ on $(\Omega,d_{cc})$.
\end{proposition}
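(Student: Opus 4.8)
The plan is to verify the defining inequality of an upper gradient directly, for an arbitrary curve $\gamma\colon[a,b]\to\Omega$ that is $1$-Lipschitz with respect to $d_{cc}$, by reducing it to an admissible curve and then applying the chain rule, the Cauchy--Schwarz inequality, and the fundamental theorem of calculus. As a preliminary observation, $\enorm{X(f)}{s}$ is Borel: since $f\in C^\infty(\Omega,\enorm{\cdot}{d})$ and the coefficients of the $X_j$ are continuous, each $X_j(f)=\duality{X_j,\nabla f}$ is continuous on $\Omega$, so $\enorm{X(f)}{s}$ is continuous, in particular Borel.

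The crucial first step is the reduction: a curve that is $1$-Lipschitz with respect to $d_{cc}$ is, up to the obvious identification, admissible. This is part of the basic theory of Carnot--Carath\'eodory spaces (see \cite[Section 11]{HK}, in particular Proposition 11.4, already invoked above for arc-length parametrizations). Concretely, it gives that $\gamma$ is absolutely continuous as a map into $(\RR^d,\enorm{\cdot}{d})$ and that there are Borel functions $c_1,\dots,c_s\colon[a,b]\to\RR$ with $\sum_{j=1}^s c_j(t)^2\leq1$ for every $t$ and $\gamma'(t)=\sum_{j=1}^s c_j(t)X_j(\gamma(t))$ for a.e.\ $t\in[a,b]$. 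Since $\gamma$ is continuous, $\gamma([a,b])$ is a compact subset of the open set $\Omega$, so $\nabla f$ is bounded on a compact neighbourhood of it; together with the absolute continuity of $\gamma$ this yields that $f\circ\gamma$ is absolutely continuous on $[a,b]$, with $(f\circ\gamma)'(t)=\duality{\nabla f(\gamma(t)),\gamma'(t)}$ for a.e.\ $t$.

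From here the computation is routine. Substituting the admissibility relation and using \eqref{eq:definition_gradient}, for a.e.\ $t$ one gets
\[
(f\circ\gamma)'(t)=\sum_{j=1}^s c_j(t)\,\duality{\nabla f(\gamma(t)),X_j(\gamma(t))}=\sum_{j=1}^s c_j(t)\,X_j(f)(\gamma(t)),
\]
so by Cauchy--Schwarz and the constraint $\sum_j c_j(t)^2\leq1$,
\[
\abs{(f\circ\gamma)'(t)}\leq\pare{\sum_{j=1}^s c_j(t)^2}^{1/2}\pare{\sum_{j=1}^s X_j(f)(\gamma(t))^2}^{1/2}\leq\enorm{X(f)(\gamma(t))}{s};
\]
integrating over $[a,b]$ and using the fundamental theorem of calculus for the absolutely continuous function $f\circ\gamma$ gives
\[
\abs{f(\gamma(b))-f(\gamma(a))}=\abs{\int_a^b(f\circ\gamma)'(t)\,dt}\leq\int_a^b\enorm{X(f)(\gamma(t))}{s}\,dt,
\]
which is exactly the inequality required for $\enorm{X(f)}{s}$ to be an upper gradient of $f$ on $(\Omega,d_{cc})$.

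The only non-elementary ingredient, and the step I expect to be the main obstacle, is the reduction ``$1$-Lipschitz with respect to $d_{cc}$ $\Rightarrow$ admissible'': that a $d_{cc}$-Lipschitz curve is absolutely continuous in the ambient Euclidean sense and is almost everywhere tangent to the horizontal distribution $\lspan\set{X_1,\dots,X_s}$, with coefficients satisfying $\sum_j c_j^2\leq1$. This is precisely where the Carnot--Carath\'eodory structure is used (local comparison of $d_{cc}$ with $\enorm{\cdot}{d}$ and the structure of horizontal curves), and I would simply quote it from \cite{HK}; everything after that is the chain rule, Cauchy--Schwarz, and the fundamental theorem of calculus.
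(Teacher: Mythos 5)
Your proof is correct and is essentially the standard argument: the paper itself gives no proof of this proposition, citing it directly from [HK, Proposition 11.6], and your reduction of a $d_{cc}$-$1$-Lipschitz curve to an admissible one via [HK, Proposition 11.4], followed by the chain rule for the absolutely continuous composition $f\circ\gamma$, Cauchy--Schwarz against the constraint $\sum_j c_j(t)^2\leq 1$, and integration, is exactly the argument of that reference. You also correctly isolate the one genuinely non-elementary input (Lipschitz with respect to $d_{cc}$ implies admissible), which is legitimately quoted rather than reproved.
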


We will be interested in Carnot-Carath\'eodory metric spaces $(G,d_{cc})$ for which the following additional condition is true:
\begin{itemize}
    \item[$(\diamond)\,\,$]
    \label{eq:condition_heart}
        there exists an $\alpha\in(0,1)$ such that for every bounded set $K\subset G$ there are constants $C_1, C_2>0$ satisfying
$$
C_1\enorm{x-y}{d}\leq d_{cc}(x,y)\leq C_2\enorm{x-y}{d}^\alpha
$$
whenever $x,y\in K$.
\end{itemize}
Note that for a Carnot-Carath\'eodory metric space $(G,d_{cc})$ satisfying $(\diamond)$ the identity $(G,d_{cc})\to (G,\enorm{\cdot}{d})$ is a homeomorphism. Moreover, $(G,d_{cc})$ is proper, and hence also complete. Thus, as a proper length space, it is geodesic \cite[Lemma 8.3.11]{HKST}.

A prime example of such spaces are \emph{Carnot groups} (in particular, the Heisenberg group) with the Carnot-Carath\'eodory metric. A Carnot group $G$ is a connected, simply connected Lie group whose associated Lie algebra $\mathfrak{g}$ admits a stratification, i.e.\ a direct sum decomposition $\mathfrak{g}=V_1\oplus V_2\oplus\dots\oplus V_n$ such that $[V_1,V_i]=V_{i+1}$ if $1\leq i < n$ and $[V_1,V_n]=\{0\}$. Being nilpotent, $G$ is then diffeomorphic to $\RR^d$ for some $d\in\N$. Let $X_1, X_2,\ldots, X_s$ be a basis of $V_1$. Then each $X_i$ can be uniquely extended to a left-invariant vector field on $G$, which we denote $X_i$ again, using the left translations on $G$. The Carnot-Carath\'eodory distance $d_{cc}$ on $G$ associated with the family of vector fields $X=(X_1,\dots, X_s)$ is a metric on $G$ that satisfies condition $(\diamond)$ with $\alpha=1/n$ (see e.g.\ Propositions 11.14 and 11.15 and equation (64) in \cite{HK}). For unexplained notions and more details, we refer the reader to \cite{LD}.

In Lemma \ref{lm:approximation} below, we prove an approximation result that will be crucial for showing that the Lipschitz-free spaces over Carnot-Carath\'eodory metric spaces satisfying $(\diamond)$ have property (V$^*$). So, suppose $(G,d_{cc})$ is a Carnot-Carath\'eodory metric space satisfying $(\diamond)$. From now on, Lipschitz constants and the terminology Lipschitz, length, etc.\ are meant with respect to $d_{cc}$. If we want to consider the Euclidean norm instead, we specify it by explicitly writing $\enorm{\cdot}{d}$. Also, all admissible curves are assumed to be parametrized by arc length. Given $p,q\in G$, we will denote $\Gamma_{pq}\subset G^{[0,d_{cc}(p,q)]}$ the set of all geodesic curves joining $p$ and $q$. Note that $\Gamma_{pq}$ is non-empty for every $p,q\in G$ as the metric space $(G,d_{cc})$ is geodesic, and that every geodesic curve in $(G,d_{cc})$ is admissible by \cite[Proposition 11.4]{HK}. Finally, for a compact set $K\subset G$, set
$$
     \Gamma_K:=\overline{\bigcup\left\{\gamma([0,d_{cc}(p,q)]) \,\colon\, p\neq q\in  K, \gamma \in \Gamma_{pq}\right\}}\,\subset\, G.
    $$
Observe that $ K\subset  \Gamma_K\subset G$ and, since $ K$ is bounded, the lengths $d_{cc}(p,q)$ of curves $\gamma$ have a uniform bound for all distinct $p$ and $q$ in $ K$. Hence, using the fact that $G$ is proper, $ \Gamma_K$ is also compact.

\begin{lemma}
\label{lm:approximation}
 Let $(G,d_{cc})$ be a Carnot-Carath\'eodory metric space satisfying $(\diamond)$ and let $x_0\in G$ be arbitrary. Then there exist a compact neighborhood $K\subset G$ of $x_0$, an open connected set $\Omega\subset G$ such that $\Gamma_K\subset \Omega$, and a constant $\beta>0$ such that for every Lipschitz function $f\colon G\to \RR$ and every $\varepsilon>0$, there exists a function $g\colon\Omega\to\RR$ satisfying the following:
  \begin{enumerate}[label={\upshape{(\roman*)}}]
  \item \label{g_smooth}
  $g\in C^\infty(\Omega,\enorm{\cdot}{d})$,
 \item \label{g_approximation} $\sup_{x\in  \Gamma_K} |f(x)-g(x)|<\varepsilon$,
 \item \label{g_gradient} $\sup_{x\in  \Gamma_K}\enorm{X(g)(x)}{s}\leq \Lip(f)$ (where $X(g)$ is as in \eqref{eq:definition_gradient}),
  \item \label{g_value0} $g(x_0)=f(x_0)$,
 \item \label{g_holder}
 $|g(x)-g(y)|\leq \Lip(f)\beta \enorm{x-y}{d}^\alpha$ for every $x\neq y\in  \Gamma_K$ (where $\alpha$ is from condition $(\diamond)$).
 \end{enumerate}
 In particular, 
 the function $g\restrict_K\colon K\to\RR$ is Lipschitz with $\Lip(g\restrict_K)\leq \Lip(f)$.
 \end{lemma}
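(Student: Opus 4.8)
My plan is to take $g$ to be a small affine correction of the Euclidean mollification of $f$ at a sufficiently fine scale.

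\medskip\noindent\emph{Set-up.} I first fix everything depending only on $x_0$. Since $(\diamond)$ holds, $(G,d_{cc})$ is proper and geodesic, so I take $K=B(x_0,r)$ for a small $r>0$: this is a compact connected neighbourhood of $x_0$, and $\Gamma_K$ is then a connected compact subset of $G$. I pick a connected open $\Omega$ with $\Gamma_K\subset\Omega$ and $\overline\Omega$ compact in $G$, then $\rho_0>0$ smaller than the $\enorm{\cdot}{d}$-distance from $\overline\Omega$ to $\RR^d\setminus G$; I let $N$ be the $\rho_0$-neighbourhood of $\overline\Omega$, I take $C_1,C_2$ to be the $(\diamond)$-constants for $N$, and $\beta:=C_2$. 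I also fix a mollifier $\varphi\in C^\infty_c(\RR^d)$, $\varphi\geq0$, $\int\varphi=1$, $\supp\varphi\subset\{z:\enorm{z}{d}\leq1\}$, writing $\varphi_\rho:=\rho^{-d}\varphi(\cdot/\rho)$.

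\medskip\noindent\emph{The construction.} Given $f$ and $\varepsilon$, I may assume $L:=\Lip(f)>0$ (otherwise $f$ is constant on the connected set $G$ and $g\equiv f(x_0)$ works). For $0<\rho<\rho_0$ put $g_\rho:=f*\varphi_\rho$; this is well defined and smooth for $\enorm{\cdot}{d}$ on $\Omega$, giving \ref{g_smooth}. I will use three facts about $g_\rho$: \textbf{(a)} $\|g_\rho-f\|_{\infty,\Gamma_K}\leq L\,C_2\,\rho^{\alpha}$, because $|g_\rho(x)-f(x)|\leq\sup_{\enorm{z}{d}\leq\rho}|f(x-z)-f(x)|\leq L\sup_{\enorm{z}{d}\leq\rho}d_{cc}(x,x-z)\leq L\,C_2\rho^{\alpha}$ for $x\in\Gamma_K$ by $(\diamond)$; \textbf{(b)} $|g_\rho(x)-g_\rho(y)|\leq L\,C_2\,\enorm{x-y}{d}^{\alpha}$ for $x,y\in\Gamma_K$, since mollification does not increase the $\enorm{\cdot}{d}$-$\alpha$-H\"older seminorm of $f$, which on $N$ is $\leq L\,C_2$ by $(\diamond)$; \textbf{(c)} the key estimate $\sup_{x\in\Gamma_K}\enorm{X(g_\rho)(x)}{s}\leq L+\omega(\rho)$ with $\omega(\rho)\to0$ as $\rho\to0$. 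Granting (c), I choose $\rho$ so small that $\omega(\rho)<L$, $L\,C_2\rho^{\alpha}<\tfrac\varepsilon4$, and $\tfrac{\omega(\rho)}{L+\omega(\rho)}(\|f\|_{\infty,\Gamma_K}+1)<\tfrac\varepsilon4$, put $\lambda:=\tfrac{L}{L+\omega(\rho)}\in(0,1]$, and define
$$
g:=\lambda\,g_\rho+\big(f(x_0)-\lambda\,g_\rho(x_0)\big).
$$
Then $g\in C^\infty(\Omega,\enorm{\cdot}{d})$ and $g(x_0)=f(x_0)$, which are \ref{g_smooth} and \ref{g_value0}. As $g-g_\rho$ is constant, $\enorm{X(g)(x)}{s}=\lambda\enorm{X(g_\rho)(x)}{s}\leq\lambda(L+\omega(\rho))=L$ on $\Gamma_K$ by (c), i.e.\ \ref{g_gradient}; from (b) and $\lambda\leq1$ one gets \ref{g_holder} with $\beta=C_2$; and (a) together with $1-\lambda=\tfrac{\omega(\rho)}{L+\omega(\rho)}$ and the smallness conditions yields $\|g-f\|_{\infty,\Gamma_K}<\varepsilon$, i.e.\ \ref{g_approximation}. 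The final assertion then follows: for $x\neq y\in K$ pick $\gamma\in\Gamma_{xy}$; its image lies in $\Gamma_K$, $\gamma$ is admissible hence $1$-Lipschitz for $d_{cc}$, and $\enorm{X(g)}{s}$ is an upper gradient of $g$ on $\Omega$ by Proposition \ref{prop:upper_gradient}, so $|g(x)-g(y)|\leq\int_0^{d_{cc}(x,y)}\enorm{X(g)(\gamma(t))}{s}\,dt\leq L\,d_{cc}(x,y)$.

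\medskip\noindent\emph{Proof of (c), and the main obstacle.} This is the heart of the matter. Since $f$ is $L$-Lipschitz for $d_{cc}$, the constant $L$ is an upper gradient of $f$ along every admissible curve; comparing with Proposition \ref{prop:upper_gradient} and invoking the differentiation theory for Carnot--Carath\'eodory (and PI) spaces from \cite{HK,HKST}, this forces the horizontal gradient $X(f)=(X_1f,\dots,X_sf)$ — taken weakly, equivalently at $\enorm{\cdot}{d}$-differentiability points $x$ of $f$ where $|X_jf(x)|=\big|\tfrac{d}{dt}\big|_{t=0}f(\Phi_j^t(x))\big|$ along the flow $\Phi_j^t$ of $X_j$, so that $\enorm{X(f)(x)}{s}\leq L$ there — to satisfy $\enorm{X(f)(x)}{s}\leq L$ for a.e.\ $x$. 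I would then write $X_jg_\rho=(X_jf)*\varphi_\rho+R_j^\rho$, where $R_j^\rho$ is the commutator produced by carrying the variable-coefficient field $X_j$ through the convolution; convexity of $\enorm{\cdot}{s}$ and Jensen give $\enorm{\big((X_1f)*\varphi_\rho,\dots,(X_sf)*\varphi_\rho\big)(x)}{s}\leq L$ for all $x$, reducing (c) to $\sup_{x\in\Gamma_K}|R_j^\rho(x)|\to0$. This uniform Friedrichs-type commutator estimate is the delicate point I expect to demand the most care: $R_j^\rho$ splits into a piece governed by $a_{ij}(x)-a_{ij}(x-z)$ and a piece carrying $\operatorname{div}X_j$, neither individually small, so one must exploit their cancellation uniformly over the compact set $\Gamma_K$, and this is where the (local) Lipschitz regularity of the coefficients is genuinely used — if a smoother setting is needed for this step, one first mollifies the $a_{ij}$ and controls the resulting perturbation of $d_{cc}$.
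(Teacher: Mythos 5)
Your construction is the same as the paper's: mollify $f$ Euclideanly at a fine scale, rescale by $L/(L+\text{error})$ to force the gradient bound, and add a constant to fix the value at $x_0$; the verifications of \ref{g_smooth}, \ref{g_approximation}, \ref{g_value0} and \ref{g_holder} and the final Lipschitz claim via Proposition \ref{prop:upper_gradient} all match. The one step you leave open --- your fact (c), the uniform estimate $\sup_{x\in\Gamma_K}\enorm{X(f\ast\varphi_\rho)(x)}{s}\leq L+\omega(\rho)$ with $\omega(\rho)\to0$ --- is precisely what the paper does \emph{not} prove from scratch but takes from \cite[Proposition 11.10]{HK}, which gives $\enorm{X(f\ast u_m)(x)}{s}\leq \Lip(f)+\enorm{A_{1/m}f(x)}{s}$ with the error tending to $0$ uniformly on compact subsets of $\Omega$; this already packages the a.e.\ bound $\enorm{X(f)}{s}\leq L$ for $d_{cc}$-Lipschitz $f$ and the commutator cancellation you describe. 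So rather than completing the Friedrichs-type argument (which is genuinely delicate, since $f$ is only Euclidean-H\"older and $\nabla f$ need not exist pointwise), you should simply invoke that result; with that citation your proof is complete and coincides with the paper's.
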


\begin{proof}
Throughout this proof, we will use the notation $B_{\ell_2^d}(x,r)$ and $B_{cc}(x,r)$ to refer to closed balls with respect to the Euclidean and Carnot-Carath\'eodory metrics on $G$, respectively, and $B^o_{\ell_2^d}(x,r)$ for open balls in the Euclidean metric.

Let $x_0\in G$ and $R>0$ such that $B^o_{\ell_2^d}(x_0,2R)\subset G$. Because $(G,d_{cc})$ is homeomorphic to $(G,\enorm{\cdot}{d})$, we can find an $r>0$ such that $B_{cc}(x_0,3r)\subset B^o_{\ell_2^d}(x_0,R)$. Put $$K:=B_{cc}(x_0,r)\qquad \textup{and} \qquad \Omega:=B^o_{\ell_2^d}(x_0,R).$$ Then $\Gamma_K\subset\Omega$ because the length of each geodesic joining two points from $K$ is at most $2r$.
   
    Take an arbitrary $f\colon G\to \RR$ Lipschitz and $\varepsilon>0$. Denote $L=\Lip(f)<\infty$. Find $m_0\in\NN$ such that $\frac{1}{m_0}<R$. For $m\in \N$, $m\geq m_0$, consider the standard mollifier defined on $\RR^d$ by $u_m(x)=m^d\varphi(mx)$, where $\varphi\in C_0^\infty(\RR^d,\enorm{\cdot}{d})$ is a non-negative function with $\supp(\varphi)\subset B^o_{\ell_2^d}(0,1)$ and $\int_{\RR^d} \varphi=1$. Then $f\ast u_m\in C^\infty(\Omega,\enorm{\cdot}{d})$ and, by \cite[Proposition 11.10]{HK}, we have that $f\ast u_m\to f$ as $m\to\infty$ uniformly on compact subsets of $\Omega$ and
    $$
    \|X(f\ast u_m)(x)\|_{\ell_2^s}\leq L+\|A_{1/m}f(x)\|_{\ell_2^s}
    $$
    for every $x\in \Omega$, where $\|A_{1/m}f\|_{\ell_2^s}\to 0$ as $m\to\infty$ uniformly on compact sets. Let $\widetilde{\varepsilon}>0$ be so small that
    $$ \frac{L \widetilde{\varepsilon}}{L+\widetilde{\varepsilon}}\pare{\sup_{x\in \Gamma_K}d_{cc}(x,x_0)+2}<\varepsilon. $$
    Then we can find $\widetilde{m}_0>m_0$ such that
    \begin{equation}
        \label{eq:convolution}
        |(f\ast u_{\widetilde{m}_0})(x)-f(x)|<\widetilde{\varepsilon} \quad \text{and} \quad \|A_{1/\widetilde{m}_0}f(x)\|_{\ell_2^s}<\widetilde{\varepsilon}
    \end{equation}
    for every $x\in  \Gamma_K$. Moreover, since $ \Gamma_K\subset \Omega$,
    \begin{equation}
        \label{eq:gradient of convolution}
        \|X(f\ast u_{\widetilde{m}_0})(x)\|_{\ell_2^s}\leq L+\widetilde{\varepsilon}
    \end{equation}
    for every $x\in  \Gamma_K$. Set 
    $$\widetilde{g}=\frac{L}{L+\widetilde{\varepsilon}}\cdot \left(f\ast u_{\widetilde{m}_0}\right)\quad\textup{and}\quad g=\widetilde{g}+(f(x_0)-\widetilde{g}(x_0))\mathbbm{1}_\Omega.
    $$
    Then $g\in C^\infty(\Omega,\enorm{\cdot}{d})$ as desired in \ref{g_smooth}, and, using \eqref{eq:convolution}, \eqref{eq:gradient of convolution} and the fact that $f$ is $L$-Lipschitz, it is straightforward to verify that $g$ satisfies properties \ref{g_approximation}--\ref{g_value0}. To see \ref{g_holder}, let $x,y\in  \Gamma_K$. Since $f$ is $L$-Lipschitz, we have
    \begin{align*}
    |f\ast u_{\widetilde{m}_0}(x)-f\ast u_{\widetilde{m}_0}(y)|&\leq\int_{B^o_{\ell_2^d}(0,1/\widetilde{m}_0)}|f(x-z)-f(y-z)|u_{\widetilde{m}_0}(z)\,dz\\
       &\leq L d_{cc}(x-z,y-z)\leq L\beta\enorm{x-y}{d}^\alpha,
    \end{align*}
    where $\beta>0$ is the constant $C_2$ in $(\diamond)$ corresponding to the compact set $[ \Gamma_K]_{1/m_0}$.
    
    Now, using that $\|X(g)\|_{\ell_2^s}$ is an upper gradient of $g\in C^\infty(\Omega,\enorm{\cdot}{d})$ on $(\Omega,d_{cc})$ by Proposition \ref{prop:upper_gradient}, we obtain that for any $p\neq q \in  K$
    $$
    |g(p)-g(q)|\leq \int_0^{d_{cc}(p,q)} \|X(g)(\gamma_{pq}(t))\|_{\ell_2^s} \, dt \leq L d_{cc}(p,q),
    $$
    where $\gamma_{pq}\in \Gamma_{pq}$ is any geodesic curve joining $p$ and $q$. Indeed, the image of such a curve is contained in $ \Gamma_K$ and the estimate follows from \ref{g_gradient}. Therefore, $g\restrict_K$ is Lipschitz with $\Lip(g\restrict_K)\leq L$, which completes the proof.
\end{proof} 

\subsection{Proof of property (V*) in \texorpdfstring{$\lipfree{G}$}{F(G)}}

This section is devoted to proving property (V$^*$) for Lipschitz-free spaces over Carnot-Carath\'eodory spaces satisfying $(\diamond)$.

In addition to the approximation Lemma \ref{lm:approximation}, another essential ingredient will be the following fact about Hilbert spaces that can be found for instance in \cite{Wojt}:

\begin{lemma}[{\cite[Chapter III.D, Lemma 32]{Wojt}}]
\label{lem:Hilbert_conv_comb}
Let $(H,\norm{\cdot})$ be a Hilbert space, $n\in\N$, and $x_1,x_2,\ldots,x_n\in B_H$. Then there exist nonempty sets $A,B\subset\{1,2,\ldots,n\}$ such that $\max A<\min B$ and 
$$
\norm{\frac{1}{|A|}\sum_{i\in A}x_i-\frac{1}{|B|}\sum_{i\in B}x_i}\leq \frac{4}{\sqrt{\log n}}.
$$
\end{lemma}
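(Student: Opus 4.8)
The plan is to prove this by a \emph{dyadic martingale variance argument}, exploiting the parallelogram law. First I would reduce to the case where $n$ is a power of $2$: set $m=\lfloor\log_2 n\rfloor$ and restrict attention to the initial segment $\{1,\dots,2^m\}$ of indices, so that any $A,B$ produced inside this segment still satisfy $A,B\subset\{1,\dots,n\}$ and $\max A<\min B$. The small cases $n\le 2$ are handled by inspection (for $n=2$ one simply takes $A=\{1\}$, $B=\{2\}$, as $\|x_1-x_2\|\le 2\le 4/\sqrt{\log 2}$), and the loss incurred in the reduction will be absorbed by the generous constant $4$.

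Next I would build the binary tree of consecutive blocks: let $I$ range over the intervals of indices obtained by repeatedly halving $\{1,\dots,2^m\}$ (so there are $2^j$ blocks at level $j$, each of length $2^{m-j}$), and for each block put $m_I=\frac{1}{|I|}\sum_{i\in I}x_i\in B_H$. When a level-$j$ block $I$ splits into its left half $I'$ and right half $I''$ one has $m_I=\tfrac12(m_{I'}+m_{I''})$, hence $m_{I'}+m_{I''}=2m_I$, and the parallelogram law gives the key identity
$$
\|m_{I'}-m_{I''}\|^2 \;=\; 2\|m_{I'}\|^2+2\|m_{I''}\|^2-4\|m_I\|^2 .
$$
Note that $\max I'<\min I''$ automatically.

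Then I would introduce the level quantities $\Phi_j=2^{-j}\sum_{I\text{ at level }j}\|m_I\|^2$, i.e.\ the average of $\|m_I\|^2$ over the $2^j$ blocks at level $j$. Summing the identity above over the level-$j$ blocks yields $\Phi_{j+1}-\Phi_j=\tfrac14 a_j\ge 0$, where $a_j=2^{-j}\sum_{I\text{ at level }j}\|m_{I'}-m_{I''}\|^2$ is the average of $\|m_{I'}-m_{I''}\|^2$ at that level. On the other hand $\Phi_0=\|m_{\{1,\dots,2^m\}}\|^2\ge 0$ while $\Phi_m=2^{-m}\sum_{i=1}^{2^m}\|x_i\|^2\le 1$, since each $\|x_i\|\le 1$. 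Telescoping over $j=0,\dots,m-1$ gives $\sum_{j=0}^{m-1}a_j=4(\Phi_m-\Phi_0)\le 4$, so by pigeonhole there is a level $j$ with $a_j\le 4/m$, hence a block $I$ at that level with $\|m_{I'}-m_{I''}\|^2\le 4/m$. Taking $A=I'$ and $B=I''$ then gives $\big\|\tfrac1{|A|}\sum_{i\in A}x_i-\tfrac1{|B|}\sum_{i\in B}x_i\big\|\le 2/\sqrt m$, and since $m\ge\log_2 n-1\ge c\log n$ one checks that $2/\sqrt m\le 4/\sqrt{\log n}$ for all $n\ge 3$.

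The main obstacle here is not conceptual but arithmetical: one must verify that the constant truly works, tracking the factor lost in the reduction to a power of two together with the conversion between $\log_2$ and $\log$, and confirming that the bound $2/\sqrt m$ stays under $4/\sqrt{\log n}$ once $n\ge 3$ (the cases $n\le 2$ being trivial). The Hilbert-space hypothesis is used only through the parallelogram identity, but that is precisely the ingredient that makes the telescoping sum $\sum_j a_j$ bounded by an absolute constant, forcing one level to be quantitatively small; in a general Banach space no comparable estimate is available, which is why the lemma is genuinely Hilbertian.
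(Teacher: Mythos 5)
Your proof is correct: the parallelogram identity for the two halves of a dyadic block, the telescoping relation $\Phi_{j+1}-\Phi_j=\tfrac14 a_j$ with $\Phi_0\ge 0$ and $\Phi_m\le 1$, the pigeonhole choice of a level with $a_j\le 4/m$, and the final comparison $2/\sqrt{m}\le 4/\sqrt{\log n}$ (which indeed holds for $n\ge 2$ whether $\log$ is natural or base $2$, since $\log n\le 4\lfloor\log_2 n\rfloor$ there) all check out; the only degenerate case is $n=1$, where the statement is vacuous because no disjoint nonempty $A,B\subset\{1\}$ exist and $\log 1=0$. The paper itself gives no proof — the lemma is quoted from Wojtaszczyk — and your dyadic martingale square-function argument is essentially the standard proof found there, with the Hilbert-space structure entering exactly where you say it must, namely through the parallelogram law (equivalently, orthogonality of martingale differences), which is what bounds $\sum_j a_j$ by an absolute constant.
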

We also recall an observation from \cite{Wojt} to be used repeatedly in the proof of Theorem \ref{thm:carnot_compact}.
\begin{fact}[{\cite[observation on pp. 170--171]{Wojt}}]
\label{ob:index_set}
Any finite partition of the index set 
\begin{equation*}
\mathcal{J}=\{(n,j):n\in\N, 1\leq j\leq n\}
\end{equation*}
contains a set that has a subset with the same structure, i.e. of the form
$$\{(n_i,j^i_s):i\in\N, n_1<n_2<\ldots\in\N, s\in\{1,\ldots,i\},1\leq j^i_1<j^i_2<\ldots<j^i_i\leq n_i\}.$$
\end{fact}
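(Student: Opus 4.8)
The plan is to prove this by a two-stage pigeonhole argument: first \emph{within} each ``row'' $R_n := \{(n,1),\dots,(n,n)\}$ of $\mathcal{J}$, and then \emph{across} the rows in order to pin down a single block of the partition.

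Fix a finite partition $\mathcal{J} = P_1 \cup \dots \cup P_r$. For each block index $\ell\in\{1,\dots,r\}$ and each $n\in\N$, let $f_\ell(n) := |P_\ell\cap R_n|$ count the pairs $(n,j)$ with $1\leq j\leq n$ lying in $P_\ell$. Since the blocks partition $R_n$, we have $\sum_{\ell=1}^r f_\ell(n) = n$, hence $\max_{1\leq\ell\leq r} f_\ell(n) \geq n/r$ for every $n$. So for each $n$ I can choose a block index $\ell(n)$ with $f_{\ell(n)}(n)\geq n/r$. As $\ell(n)$ takes only finitely many values, the infinite pigeonhole principle provides a single index $\ell^\ast$ and an infinite set $S\subset\N$ with $f_{\ell^\ast}(n)\geq n/r$ for all $n\in S$. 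Since $n/r\to\infty$ along $S$, I can recursively select $n_1<n_2<\dots$ in $S$ with $n_i\geq r i$, so that $f_{\ell^\ast}(n_i)\geq i$ for each $i$.

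To finish: for each $i$ the block $P_{\ell^\ast}$ contains at least $i$ of the pairs $(n_i,1),\dots,(n_i,n_i)$, so I may pick second coordinates $1\leq j^i_1<j^i_2<\dots<j^i_i\leq n_i$ with $(n_i,j^i_s)\in P_{\ell^\ast}$ for all $s\in\{1,\dots,i\}$. Then $\{(n_i,j^i_s):i\in\N,\ 1\leq s\leq i\}$ is contained in the single block $P_{\ell^\ast}$ and has precisely the triangular structure claimed. The argument involves no serious obstacle; the only point that needs care is the order of the two steps --- one must first extract a block $\ell^\ast$ working for infinitely many rows (via infinite pigeonhole over the finitely many blocks), and only then thin out the rows, because the block realizing the density bound $n/r$ in row $n$ may a priori depend on $n$.
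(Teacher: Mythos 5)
Your argument is correct and is essentially the standard double-pigeonhole proof of the observation the paper cites from Wojtaszczyk: one block must capture at least $n/r$ elements of each row, the infinite pigeonhole fixes a single block $\ell^\ast$ working for infinitely many rows, and thinning to rows with $n_i\geq ri$ yields the triangular subset. The one point needing care --- fixing the block before thinning the rows --- is exactly the one you flag, so nothing is missing.
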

In what follows, we will often consider extensions of Lipschitz functions from a subset of a metric space to the whole metric space preserving the Lipschitz constant. If not specified otherwise, we mean any such extension (existing e.g.\ due to McShane's theorem \cite[Theorem 1.33]{Weaver2}).

The proof of Theorem \ref{thm:carnot_compact} below follows Bourgain's proof of property (V$^*$) for the duals of so-called rich subspaces of $C(K,E)$-spaces as presented in \cite[Theorem III.D.31]{Wojt}, or more precisely, its modification for Lipschitz-free spaces from \cite{KP}.

\begin{theorem}
\label{thm:carnot_compact}
If $(G,d_{cc})$ is a Carnot-Carath\'eodory metric space satisfying $(\diamond)$, then $\lipfree{G}$ has properties (V$^*$) and (R).
\end{theorem}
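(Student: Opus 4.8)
The plan is to prove the stronger assertion that \emph{every (V$^*$)-set in $\lipfree{G}$ is relatively super weakly compact (RSWC)}; by \eqref{eq:set implications} (cf.\ the discussion following it) this is equivalent to $\lipfree{G}$ having both property (V$^*$) and property (R). The argument follows Bourgain's scheme for duals of rich subspaces of $C(K,E)$-spaces, in the Lipschitz-free adaptation of \cite{KP}, with Lemma \ref{lm:approximation} supplying the approximation input, the horizontal gradient $X(g)$ playing the role of the Euclidean derivative.

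\emph{Reduction to the balls of Lemma \ref{lm:approximation}.} Since $(G,d_{cc})$ satisfies $(\diamond)$ it is proper, so Lemma \ref{lm:approximation} provides for each $x_0\in G$ a compact neighborhood $K_{x_0}$ (a small ball) to which its conclusion applies. Granting the \emph{core claim} that each $\lipfree{K_{x_0}}$ has properties (V$^*$) and (R), Lemma \ref{lm:local to compact}, applied to the standard property ``(V$^*$) and (R)'', shows that $\lipfree{L}$ has both properties for every compact $L\subset G$, hence that every (V$^*$)-set in $\lipfree{L}$ is RSWC. Now let $\Gamma$ be a (V$^*$)-set in $\lipfree{G}$. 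By Theorem \ref{th:v*_tight} it is tight, so by \cite[Theorem 3.2]{ANPP} and the argument in the proof of Theorem \ref{mth:locally determined}, for each $\varepsilon>0$ there are a compact $K\subset G$ and a linear map $T\colon\Gamma\to\lipfree{K}$ with $\norm{\gamma-T\gamma}\le\varepsilon$ such that $T(\Gamma)$ is a (V$^*$)-set with respect to $\lipfree{\cl{U}}$ for every bounded neighborhood $U$ of $K$; fixing such a $U$, the set $\cl{U}$ is compact, so $T(\Gamma)$ is RSWC in $\lipfree{\cl{U}}$, hence in $\lipfree{G}$, while $\Gamma\subset T(\Gamma)+\varepsilon B_{\lipfree{G}}$. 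By Fact \ref{fact:sets}\ref{fact:sets:grothendieck}, $\Gamma$ is RSWC.

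\emph{Setting up the core claim.} Fix $K=K_{x_0}$, with base point $x_0$, together with the Euclidean-open set $\Omega\supset\Gamma_K$ and the constant $\beta$ from Lemma \ref{lm:approximation}. Let $\Gamma\subset\lipfree{K}$ be a (V$^*$)-set; after rescaling and passing to $\cl{\conv}\,\Gamma$ (still a (V$^*$)-set by Fact \ref{fact:sets}) we may assume $\Gamma$ is closed, convex and contained in $B_{\lipfree{K}}$, and suppose toward a contradiction that it is not RSWC. By Proposition \ref{pr:lancien_raja} there is $\xi>0$ such that for every $n$ there are $(\mu^n_j)_{j=1}^{n}$ in $\Gamma$ and $(\varphi^n_k)_{k=1}^{n}$ in $B_{\Lip_0(K)}$ with $\duality{\varphi^n_k,\mu^n_j}=\xi$ for $k\le j$ and $=0$ for $k>j$. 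Approximating each $\mu^n_j$ by a finitely supported element (so that it admits a finite molecule representation with distances bounded below), then extending each $\varphi^n_k$ to a $1$-Lipschitz function on $G$ and applying Lemma \ref{lm:approximation} with error small relative to those distances, I replace $\varphi^n_k$ by a function $g^n_k\in C^\infty(\Omega,\enorm{\cdot}{d})$ with $\sup_{x\in\Gamma_K}\enorm{X(g^n_k)(x)}{s}\le1$, with $\Lip(g^n_k\restrict_K)\le1$ and $\beta$-Hölder increments on $\Gamma_K$, and for which the triangular relations persist: $\duality{g^n_k,\mu^n_j}$ is within an arbitrarily small error of $\xi$ if $k\le j$, and of $0$ if $k>j$.

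\emph{The measure picture and the WUC criterion.} The point of $\Gamma_K$ is that every geodesic between points of $K$ stays inside it, so by Proposition \ref{prop:upper_gradient} each $\mu=\sum_i a_i m_{x_iy_i}\in\lipfree{K}$ acts on such smooth functions through a finite Borel measure $\sigma_\mu$ on $\Gamma_K$, of mass at most a fixed multiple of $\norm{\mu}$, together with an $\RR^s$-valued density of Euclidean norm $\le1$, obtained by integrating $X(g)$ along geodesics joining the $x_i$ to the $y_i$; modulo a constant term killed by $\mu$, $\duality{g,\mu}=\int_{\Gamma_K}\duality{\Psi_\mu,X(g)}\,d\sigma_\mu$. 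Consequently, any family of smooth functions $g_m$ on $\Omega$ with $\sum_m\enorm{X(g_m)(z)}{s}\le C$ for all $z\in\Gamma_K$ restricts to a WUC series $\sum_m g_m\restrict_K$ in $\Lip_0(K)$ (using Fact \ref{fact:wuc}\ref{w*uc}), with $\sum_m\abs{\duality{g_m,\mu}}\le C\norm{\mu}$. It therefore suffices to build, from the configurations above, a WUC series $\sum_m F_m$ in $\Lip_0(K)$ and elements $\mu_m\in\Gamma$ with $\abs{\duality{F_m,\mu_m}}\ge\xi/2$ for all $m$: this contradicts $\Gamma$ being a (V$^*$)-set, and proves the core claim.

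\emph{The construction, and the main obstacle.} Since the Euclidean norm on $\RR^s$ is Hilbertian, each configuration $(g^n_k)_{k=1}^n$ can be processed as in Bourgain's argument: apply Lemma \ref{lem:Hilbert_conv_comb} — iteratively, over nested sub-intervals of $\{1,\dots,n\}$ — to the vectors $X(g^n_k)$, regarded in the Hilbert space $L^2$ of a probability measure on $\Gamma_K$ adapted to the configuration, to obtain blocks $A<B$ (meaning $\max A<\min B$) across which the averaged horizontal gradient has small $L^2$-norm; the corresponding average $F:=\frac1{|A|}\sum_{k\in A}g^n_k\restrict_K-\frac1{|B|}\sum_{k\in B}g^n_k\restrict_K$ then satisfies $\duality{F,\mu^n_{\max A}}\approx\xi$ by the triangular relations, while carrying a small gradient. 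To splice the finitely many functions arising from all the different $n$ into a single infinite series satisfying the pointwise bound $\sum_m\enorm{X(F_m)(z)}{s}\le C$ on $\Gamma_K$, I would color the index set $\mathcal{J}=\{(n,j)\}$ by a finite discretization of the relevant data over the compact set $\Gamma_K$ (the location of the configuration measures, respectively of the concentration of the small gradients) and invoke Fact \ref{ob:index_set} to extract a sub-array with the structure of $\mathcal{J}$ along which the chosen gradients are essentially mutually disjoint, producing the desired $\sum_m F_m$ and $\mu_m\in\Gamma$. The delicate point — where essentially all the difficulty lies — is exactly this splicing: Lemma \ref{lem:Hilbert_conv_comb} controls only one finite triangular system at a time, against a measure whose mass grows with $n$, so there is no infinite series on the nose, and one must reconcile the $L^2$-smallness of the gradients with the \emph{pointwise} summability on $\Gamma_K$ demanded by the WUC criterion above while preserving the lower bounds $\abs{\duality{F_m,\mu_m}}\ge\xi/2$; Fact \ref{ob:index_set} is precisely the combinatorial device for this. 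Keeping track of the configuration-dependent errors from Lemma \ref{lm:approximation} and of the base-point/constant terms is the remaining bookkeeping; everything else is the soft determination machinery of Section 2 together with the geodesic upper-gradient estimate.
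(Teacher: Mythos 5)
Your overall strategy matches the paper's: reduce by local determination (plus tightness and Grothendieck's lemma for the relevant classes of sets) to the compact neighborhoods supplied by Lemma \ref{lm:approximation}, then run Bourgain's scheme there via Proposition \ref{pr:lancien_raja}, the smoothing of Lemma \ref{lm:approximation}, the upper-gradient estimate of Proposition \ref{prop:upper_gradient}, Lemma \ref{lem:Hilbert_conv_comb} and Fact \ref{ob:index_set}. Your WUC criterion --- a sequence of smooth functions with $\sum_n\enorm{X(F_n)(z)}{s}\leq C$ pointwise on $\Gamma_K$ restricts to a WUC series in $\Lip_0(K)$ --- is exactly the one the paper uses.

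However, there is a genuine gap precisely at the point you flag as ``where essentially all the difficulty lies'', and your proposed fix is not the right one. You suggest coloring the index set by a finite discretization of data over $\Gamma_K$ and invoking Fact \ref{ob:index_set} to make the gradients of the selected averages ``essentially mutually disjoint''. This cannot work: Fact \ref{ob:index_set} only stabilizes finitely many colors, whereas essential disjointness of infinitely many gradients, each of which must be of order $1$ somewhere (otherwise the evaluations $\duality{F_m,\mu_m}$ could not stay bounded below), would require infinitely many; moreover Lemma \ref{lem:Hilbert_conv_comb} yields smallness of the averaged gradients only in the $L^2$ spaces of the length measures carried by the geodesics of specific $\mu_j^n$, and says nothing about where on $\Gamma_K$ those gradients live. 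The paper's resolution is different: it keeps the averages $g_k$ and instead sets $h_k=\varphi_k g_k$, where $\varphi_k$ is a smooth approximation of the damping factor $\prod_{j<k}\bigl(1-\enorm{X(g_j)(\cdot)}{s}\bigr)$. The telescoping identity $\sum_n\prod_{j<n}(1-c_j)\,c_n\leq 1$ then gives the pointwise bound $\sum_n\enorm{X(h_n)(x)}{s}\leq 1+\sum_n\varepsilon_n$ on $\Gamma_K$ with no disjointness whatsoever, while the lower bound $\duality{h_n,\sigma_n}\geq\xi/6$ survives because the gradients of the earlier $g_j$ are small in the integrated sense along the geodesics representing $\sigma_n$ --- which is exactly what Lemma \ref{lem:Hilbert_conv_comb} delivers --- so that $h_n\approx g_n$ when tested against $\sigma_n$. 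This in turn forces an ingredient you do not have: since $X(h_k)=X(\varphi_k)g_k+\varphi_k X(g_k)$, one must also make $|g_k|$ uniformly small on $\Gamma_K$, which the paper achieves by augmenting the Hilbert space with a block $\ell_2^{|\mathcal{N}_k|}$ recording the values of the $f_l^{n_k}$ on a finite $\eta$-net $\mathcal{N}_k$ of $\Gamma_K$ (the H\"older condition \ref{g_holder} of Lemma \ref{lm:approximation} is what lets a finite net control the supremum). Without the multiplicative construction and this extra control, the series you describe cannot be completed.
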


Theorem \ref{mth:carnot v*} follows as a particular case.

\begin{proof}
By Theorem \ref{mth:locally determined}, property (V$^*$) is locally determined. Property (R) is compactly determined by \cite[Proposition 3.5]{Silber}, hence also locally determined by Proposition \ref{pr:local vs compact determination}. Thus, it is enough to prove that every point in $G$ has a compact neighborhood whose Lipschitz-free space has properties (V$^*$) and (R). We will show this for the neighborhoods obtained from Lemma \ref{lm:approximation}. 

So, let $x_0\in G$. Fix a compact neighborhood $K\subset G$ of $x_0$, an open connected set $\Omega$ satisfying $\Gamma_K\subset\Omega\subset G$, and a constant $\beta>0$, that we obtain for the given $x_0$ from Lemma \ref{lm:approximation}. Without loss of generality, we may choose $x_0$ as the base point of $K$. Suppose $\Gamma\subset\lipfree{K}$ is a bounded set that is not RSWC. Then neither is $\mathcal{K}:=\cl{\conv}\,\Gamma$, by Fact \ref{fact:sets}. We will find a WUC series $\sum_n h_n$ in $\Lip_0(K)$ such that 
$\inf_{n}\sup_{\kappa\in\mathcal{K}}\abs{\duality{h_n,\kappa}}>0$, proving thus that $\mathcal{K}$ is not a (V$^*$)-set, therefore neither is $\Gamma$ by Fact \ref{fact:sets}. This will show that $\lipfree{K}$ has properties (V$^*$) and (R), as required. Since the construction of $h_n$ is rather long, the proof will be split into several sections for readability.

\medskip
\textbf{Setup.}
Set
$$
R:=\sup_{x\in \Gamma_K}\norm{x}_{\ell_2^d}<\infty .
$$
Let $\mathcal{A}$ be the set of all functions $f:\Omega\to\RR$ satisfying these conditions:
\begin{enumerate}[label=(\alph*),itemsep=0.1cm]
\item \label{smoothness_of_f}
$f\in C^{\infty}\left(\Omega,\enorm{\cdot}{d}\right)$,
\item \label{norm_of_X(f)}
$\sup_{x\in \Gamma_K}\enorm{X(f)(x)}{s}\leq 1$ (where $X(f)$ is as in \eqref{eq:definition_gradient}),
\item \label{Lipschitzness_of_f}
$f\restrict_K\in B_{\Lip_0(K)}$,
\item \label{Holderness_of_f} $|f(x)-f(y)|\leq \beta \enorm{x-y}{d}^\alpha$ for every $x\neq y\in \Gamma_K$ (where $\alpha$ is from condition $(\diamond)$).
\end{enumerate} 
Note that $\mathcal{A}$ is a convex and symmetric set.

First, we show that there exist constants $C\geq 1$ and $\xi>0$ and, for each $n\in\N$ and $1\leq j\leq n$, functions $f_j^n\in\mathcal{A}$ and elements $\mu_j^n\in\aspan\delta(K)\subset\free{K}$ such that the following conditions are satisfied:
\begin{enumerate}[label=(\alph*),itemsep=0.1cm]
\setcounter{enumi}{4}
\item \label{norm_of_mu}
$\n{\mu_j^n}_{\lipfree{K}}\leq C,$
\item \label{approximation}
$\dist\left(\mu_j^n,\mathcal{K}\right)\leq \frac {\xi C}{2\xi+12C}$,
\item \label{evaluations} 
$\left|\ip{f_k^n}{\mu_j^n}\right|\leq\frac\xi 3\textup{ if }1\leq j<k\leq n \textup{ and }
\ip{f_k^n}{\mu_j^n}\geq\xi\textup{ if }\, 1\leq k\leq j\leq n.$
\end{enumerate} 
Indeed, set 
$$
C:=1+\sup\set{\n{\mu}_{\lipfree{K}}\,:\,\mu\in\mathcal{K}} .
$$
By Proposition \ref{pr:lancien_raja} there exist $\xi>0$ and, for each $n\in\NN$ and $1\leq j\leq n$, elements $m_j\in\mathcal{K}$ and functions $f_j\in B_{\Lip_0(K)}$ such that $\duality{f_k,m_j}=0$ if $j<k$ and $\duality{f_k,m_j}>\xi$ if $k\leq j$. We may approximate each $m_j$ in norm by some $\mu_j^n\in\lspan\,\delta(K)$ so that \ref{norm_of_mu} and \ref{approximation} hold and we still have $\abs{\duality{f_k,\mu_j^n}}<\frac{\xi}{3}$ for $j<k$ and $\duality{f_k,\mu_j^n}>\xi$ for $k\leq j$. Finally, we replace the functions $f_k$ with smooth approximations satisfying \ref{smoothness_of_f}--\ref{Holderness_of_f} as follows. Because the functionals $\mu_1^n,\ldots,\mu_n^n$ are finitely supported in the compact set $K$, we can apply Lemma \ref{lm:approximation} to arbitrary extensions of the functions $f_k$ to the entire space $G$ retaining the Lipschitz constants so that \ref{evaluations} is satisfied. We denote the resulting smooth functions $f_k^n$ for all $1\leq k\leq n$. All of them belong to $\mathcal{A}$ by the corresponding conditions in Lemma \ref{lm:approximation}. The construction of the two sequences is thus complete.

Put 
$$\mathfrak{M}:=\set{\mu_j^n:n\in\N, 1\leq j\leq n}.$$
We will define some objects for each $\mu_j^n\in\mathfrak{M}$; although they depend on the index $(n,j)$, we will often omit it from their notation for better readability. We trust that this will cause no confusion. So, for each $n\in\NN, 1\leq j\leq n$, fix an optimal representation of $\mu_j^n$ as in \eqref{eq:sum of molecules}, i.e.
\begin{equation}
\label{eq:representation of mu}
\mu_j^n=\sum_{i=1}^I a_i m_{p_i q_i},
\end{equation}
where $I\in\N$, $a_i\in (0,\infty), p_i\neq q_i\in K$ and $\sum_{i=1}^{I}a_i=\norm{\mu_j^n}$. For every pair of points $p_i, q_i$ set $d_i:=d_{cc}(p_i,q_i)$, choose an arbitrary geodesic curve $\gamma_i\in\Gamma_{p_iq_i}$ and define the length measure $\nu_i\in\meas{\Gamma_K}$ as the pushforward of the Lebesgue measure $\lambda$ on $[0,d_i]$ through the curve $\gamma_i$, i.e. $\nu_i=(\gamma_i)_\sharp\lambda$ (recall that $\gamma_i$ is assumed to be arc-length parametrized). Finally, we consider the Hilbert space 
$$H_{(n,j)}:=\left(\sum_{i=1}^{I}L_2(\Gamma_K,\nu_i;\RR^s)\right)_{\ell_2}.$$ The reason for introducing this space is that the action of $\mu_j^n$ on a function $f\in\Lip_0(K)$ can be estimated by the norm of an upper gradient of $f$ in $H_{(n,j)}$, which we will use below. The idea is to transform $(f_j^n)$ and $(\mu_j^n)$ into an ``almost biorthogonal'' system by taking appropriate convex combinations of functions $f_j^n$ that we obtain from an application of Lemma \ref{lem:Hilbert_conv_comb} in Hilbert spaces $H_{(n,j)}$.

\medskip
\textbf{Iterative construction.}
Take a~sequence $(\varepsilon_k)_{k=1}^\infty\subset(0,1)$ such that 
\begin{equation}
\label{eq:sum epsilon}
\sum_{k=1}^\infty\varepsilon_k<\frac{\xi}{6C} .
\end{equation}
We will proceed inductively to construct an increasing sequence of natural numbers $(n_k)$ and sequences $(\sigma_k)\subset\mathfrak{M}\subset\free{K}$, 
$(g_k)\subset\mathcal{A}$ and $(h_k)\subset C^\infty\pare{\Omega,\enorm{\cdot}{d}}$ that will eventually yield witnesses of $\mathcal{K}$ not being a (V$^*$)-set. In each step, we will use Observation \ref{ob:index_set} to pass to a subset of the index set $\{(n,j):n>n_k,1\leq j\leq n\}$, that will be denoted the same again. The constructed objects will satisfy the following properties:
\begin{enumerate}[label={\upshape{(\roman*)}},itemsep=0.1cm]
    \item \label{biorthogonal} $\duality{g_k,\sigma_k}\geq\frac{\xi}{3}$;
    \item \label{small_evaluations_sigma}
    if $1\leq l< k$ and if $\sigma_k=\sum_{i=1}^{I}a_im_{p_iq_i}$ and $(\gamma_i)_{i=1}^I$ are the representation and the associated curves, respectively, fixed for $\sigma_k\in\mathfrak{M}$ in \eqref{eq:representation of mu}, then \begin{equation*}
\sum_{i=1}^I\frac{a_i}{d_i}\int_0^{d_i}\enorm{X(g_l)(\gamma_i(t))}{s}dt<C\varepsilon_l;
\end{equation*}
in particular, $\abs{\duality{g_l,\sigma_k}}<C\varepsilon_l$ (see below);
    \item \label{small_evaluations_mu} if $n>n_k$, $1\leq j\leq n$ and if $\mu_j^n=\sum_{i=1}^{I}a_im_{p_iq_i}$ and $(\gamma_i)_{i=1}^I$ are the representation and the associated curves, respectively, fixed for $\mu_j^n\in\mathfrak{M}$ in \eqref{eq:representation of mu}, then 
    \begin{equation*}
\sum_{i=1}^I\frac{a_i}{d_i}\int_0^{d_i}\enorm{X(g_k)(\gamma_i(t))}{s}dt<C\varepsilon_k;
\end{equation*}
in particular, $\abs{\duality{g_k,\mu_j^n}}<C\varepsilon_k$ (see below);
    \item \label{richness}
    $h_k\restrict_K\in\Lip_0(K)$ and
    $$\sup_{x\in \Gamma_K}\enorm{\prod_{j=1}^{k-1}\left(1-\enorm{X(g_j)(x)}{s}\right)X(g_k)(x)-X(h_k)(x)}{s}< \varepsilon_k$$ (with the convention that the empty product equals $1$).
\end{enumerate}
In \ref{small_evaluations_sigma} (and similarly in \ref{small_evaluations_mu}), the inequality $\abs{\duality{g_l,\sigma_k}}<C\varepsilon_l$ follows from Proposition \ref{prop:upper_gradient}, which ensures that $\enorm{X(g_l)}{s}$ is an upper gradient of $g_l$ on $\Omega$ and therefore
\begin{align}
\label{eq:g1_evaluated_by_mu_jn}
\left|\duality{g_l,\sigma_k}\right|&=\left|\sum_{i=1}^Ia_i\frac{g_l(p_i)-g_l(q_i)}{d_i}\right|\leq \sum_{i=1}^I\frac{a_i}{d_i}\left|g_l(p_i)-g_l(q_i)\right|\\\nonumber
&\leq \sum_{i=1}^I\frac{a_i}{d_i}\int_0^{d_i}\enorm{X(g_l)(\gamma_i(t))}{s}dt<C\varepsilon_l .
\end{align}

\medskip
\textbf{Initial step.}
Let us start by choosing the initial objects for $k=1$. Let $n_1\in\N$ be so large that $\frac {4}{\log n_1}<\varepsilon_1^2$. Now, for every $1\leq l\leq n_1$, $n> n_1$ and $1\leq j\leq n$, define functions $u_{l,{(n,j)}}:\Gamma_K\to (\RR^s)^{I}$ by
\begin{equation}
    \label{eq:definition_u_lnj}
u_{l,{(n,j)}}:=\left(\sqrt{\frac{a_1}{d_1}}\cdot X\left(f_l^{n_1}\right),\sqrt{\frac{a_2}{d_2}}\cdot X\left(f_l^{n_1}\right),\ldots,\sqrt{\frac{a_I}{d_I}}\cdot X(f_l^{n_1})\right)
\end{equation}
where $I, a_i, d_i$ are from the fixed representation \eqref{eq:representation of mu} of $\mu_j^n$.
Note that $$\norm{X(f_l^{n_1})}_{L_2(\Gamma_K,\nu_i;\RR^s)}^2\leq\norm{\nu_i}=d_i$$ by \ref{norm_of_X(f)}, so applying \eqref{eq:representation of mu} and \ref{norm_of_mu} we obtain that $u_{l,{(n,j)}}\in H_{(n,j)}$ with $$\norm{u_{l,{(n,j)}}}_{H_{(n,j)}}^2\leq \sum_{i=1}^Ia_i= \norm{\mu_j^n}\leq C.$$
So, for every $n>n_1$ and $1\leq j\leq n$, we may apply Lemma \ref{lem:Hilbert_conv_comb} to the collection of vectors $u_{1,(n,j)},u_{2,(n,j)},\ldots,u_{n_1,(n,j)}\in \sqrt{C}B_{H_{(n,j)}}$, and obtain sets $A,B\subset\{1,\ldots,n_1\}$ satisfying $\max A< \min B$, and such that
\begin{equation}\label{application_of_Lemma_1}
\norm{\frac{1}{|A|}\sum_{l\in A}u_{l,{(n,j)}}-\frac{1}{|B|}\sum_{l\in B}u_{l,{(n,j)}}}_{H_{(n,j)}}\leq\frac{4\sqrt{C}}{\sqrt{\log n_1}} < 2\sqrt{C}\varepsilon_1 .
\end{equation}
Note that $A$ and $B$ depend on $(n,j)$, but there are only finitely many choices for them. So, if we partition the index set $\set{(n,j):n>n_1,1\leq j\leq n}$ according to the choice of $(A,B)$ then, by Fact \ref{ob:index_set}, after a possible selection and relabeling of the indices $(n,j)$, we may fix common sets $A,B$ and assume that \eqref{application_of_Lemma_1} holds for every $n>n_1$ and $1\leq j\leq n$.

Now put
$$g_1:=\frac{1}{2|A|}\sum_{l\in A}f_l^{n_1}-\frac{1}{2|B|}\sum_{l\in B}f_l^{n_1},$$
$\sigma_1:=\mu_{\max A}^{n_1}\in\mathfrak{M}$, and $h_1:=g_1$. As a convex combination of functions in $\mathcal{A}$, $g_1$ also belongs to $\mathcal{A}$. Let us see that these objects satisfy properties \ref{biorthogonal}--\ref{richness}. The inequality \ref{biorthogonal} is implied by \ref{evaluations}, \ref{small_evaluations_sigma} holds true by vacuity, and \ref{richness} is trivial, so we only need to check \ref{small_evaluations_mu}.

Let now $n>n_1$ and $1\leq j\leq n$. For $I, a_i, p_i, q_i, \gamma_i$, and $\nu_i$ as in the fixed representation \eqref{eq:representation of mu} of $\mu_j^n$, we have 
\begin{align*}
\sum_{i=1}^I\frac{a_i}{d_i}&\int_0^{d_i}\enorm{X(g_1)(\gamma_i(t))}{s}dt\\
&=\sum_{i=1}^I\frac{a_i}{d_i}\int_{\Gamma_K}\enorm{X(g_1)(x)}{s}d\nu_i(x)\\
&\leq \left(\sum_{i=1}^Ia_i\right)^{\frac12}\cdot \left(\sum_{i=1}^I\frac{a_i}{d_i^2}\left(\int_{\Gamma_K}\enorm{X(g_1)(x)}{s}d\nu_i(x)\right)^2\right)^{\frac12}\\
&\leq \sqrt{C}\cdot \left(\sum_{i=1}^I\frac{a_i}{d_i^2}\left(\int_{\Gamma_K}\enorm{X(g_1)(x)}{s}d\nu_i(x)\right)^2\right)^{\frac12},
\end{align*}
where the penultimate line follows from Cauchy-Schwarz inequality and the last one from the optimality of the representation \eqref{eq:representation of mu} together with \ref{norm_of_mu}.
Denote by $U=(U_1,\ldots,U_I)\in H_{(n,j)}$ the function inside the norm in \eqref{application_of_Lemma_1}, i.e. $U_i:\Gamma_K\to\RR^s$ for every $i\in\{1,\ldots,I\}$.
Because the mapping $f\mapsto X(f)$ is linear, H\"older's inequality and \eqref{application_of_Lemma_1} allow us to derive the following estimate:
\begin{align*}
\sum_{i=1}^I\frac{a_i}{d_i^2}&\left(\int_{\Gamma_K}\enorm{X(g_1)(x)}{s}d\nu_i(x)\right)^2\\\nonumber
&\leq\sum_{i=1}^I\frac{a_i}{d_i^2}\,d_i\int_{\Gamma_K}\enorm{X(g_1)(x)}{s}^2d\nu_i(x)\\\nonumber
&=\sum_{i=1}^I\int_{\Gamma_K}\enorm{\sqrt{\frac{a_i}{d_i}}X(g_1)(x)}{s}^2d\nu_i(x)=\sum_{i=1}^I\norm{\frac 12 U_i}_{L_2(\Gamma_K,\nu_i;\RR^s)}^2\\\nonumber
&=\frac 14\norm{U}_{H_{(n,j)}}^2<C\varepsilon_1^2.
\end{align*}
Combining the two estimates above gives \ref{small_evaluations_mu}.

\medskip
\textbf{Inductive step.}
Now let $k>1$ and suppose that $n_i$, $g_i$, $\sigma_i$ and $h_i$ have already been found for $1\leq i\leq k-1$. Because the upper gradients $\enorm{X(g_i)}{s}\colon\Omega\to\RR$ are continuous on $\Omega$, so is the function $\prod_{i=1}^{k-1}\left(\mathbbm{1}_{\Omega}-\enorm{X(g_i)}{s}\right)\colon\Omega\to\RR$. Thus, by Stone-Weierstrass theorem, there exists a Lipschitz function $\psi_k:\Gamma_K\to \RR$ such that 
$$\sup_{x\in \Gamma_K}\left|\prod_{i=1}^{k-1}\left(1-\enorm{X(g_i)(x)}{s}\right)-\psi_k(x)\right|<\frac{\varepsilon_k}{6}.$$ 
Denote $L_k:=\Lip(\psi_k)<\infty$ and extend $\psi_k$ to $G$ with the same Lipschitz constant. Next, we apply Lemma \ref{lm:approximation} to $\psi_k$ to obtain a function $\varphi_k\in C^{\infty}\left(\Omega,\enorm{\cdot}{d}\right)$ with $\Lip(\varphi_k\restrict_K)\leq L_k$, $\sup_{x\in \Gamma_K}\enorm{X(\varphi_k)(x)}{s}\leq L_k$, and such that
\begin{equation}
\label{eq:definition_phi_k}
\sup_{x\in \Gamma_K}\left|\prod_{i=1}^{k-1}\left(1-\enorm{X(g_i)(x)}{s}\right)-\varphi_k(x)\right|<\frac{\varepsilon_k}{3}.
\end{equation}

Fix a finite set $\mathcal{N}_k\subset \Gamma_K$ which is $\eta$-dense in $\Gamma_K$ with respect to $\enorm{\cdot}{d}$, where $\eta>0$ satisfies 
 \begin{equation}
 \label{eq:density}
 L_k\beta\eta^{\alpha}<\frac{\varepsilon_k}{3}.
 \end{equation}
Now choose $n_k>n_{k-1}$ large enough depending on $|\mathcal{N}_k|, L_k$ and $\varepsilon_k$. More precisely, let $n_k$ satisfy
$$\frac{\left(6+2/\sqrt{C}\right)\sqrt{C+|\mathcal{N}_k|(L_k\beta R^{\alpha})^2}}{\sqrt{\log(n_k)}}<\varepsilon_k,$$
where the expression on the left-hand side is ``tailored to work well'' in computations leading to \ref{small_evaluations_mu} and in \eqref{computation_richness} below.

For each $1\leq l\leq n_k$, $n> n_k$ and $1\leq j\leq n$, we define the function $u_{l,{(n,j)}}:\Gamma_K\to (\RR^s)^{I}$ as in \eqref{eq:definition_u_lnj} but with $n_k$ in place of $n_1$, where $I, a_i, d_i$ are from the fixed representation \eqref{eq:representation of mu} of $\mu_j^n$. We also set the vector 
$$v_l:=\left(L_k\cdot f_l^{n_k}(x)\right)_{x\in\mathcal{N}_k}\,\in\RR^{|\mathcal{N}_k|}.$$
Then, appealing to \ref{norm_of_X(f)}, \eqref{eq:representation of mu}, \ref{norm_of_mu} and \ref{Holderness_of_f}, respectively, we obtain that
$$w_{l,(n,j)}:=(u_{l,(n,j)},\,v_l)\in H_{(n,j)}\oplus_2 \ell_2^{|\mathcal{N}_k|}$$
with the norm satisfying
$$\norm{w_{l,{(n,j)}}}_{H_{(n,j)}\oplus_2 \ell_2^{|\mathcal{N}_k|}}\leq \sqrt{C+|\mathcal{N}_k|(L_k\beta R^\alpha)^2}.$$
Hence, we can invoke Lemma \ref{lem:Hilbert_conv_comb} on elements $w_{1,{(n,j)}},\ldots,w_{{n_k},{(n,j)}}$ of the Hilbert space $H_{(n,j)}\oplus_2 \ell_2^{|\mathcal{N}_k|}$.
Here, similarly to the initial step, applying  Lemma \ref{lem:Hilbert_conv_comb} to the functions $(u_{l,(n,j)})_l$ will serve to find a subsequence in $\mathfrak{M}$ and a corresponding convex combination of functions $(f_l^{n_k})_l$ so that the evaluations are small and hence \ref{small_evaluations_mu} holds. On the other hand, using the lemma for vectors $(v_l)_l$ will yield that the convex combination of functions $(f_l^{n_k})_l$ is uniformly small on $\Gamma_K$. The latter will give us \ref{richness}, which will be crucial for constructing a weakly unconditionally Cauchy series.

As in the initial step, we apply Lemma \ref{lem:Hilbert_conv_comb} and Fact \ref{ob:index_set} to pass to a subsequence of indices $(n,j)$, where $n>n_k$ and $1\leq j\leq n$, and obtain sets $A,B\subset\{1,\ldots,n_k\}$ satisfying $\max A<\min B$ and such that for every $n>n_k$ and $1\leq j\leq n$, we have
\begin{align}
\label{Uk}
\norm{\frac{1}{|A|}\sum_{l\in A}w_{l,(n,j)}-\frac{1}{|B|}\sum_{l\in B}w_{l,(n,j)}}_{H_{(n,j)}\oplus_2 \ell_2^{|\mathcal{N}_k|}} &\leq \frac{4\sqrt{C+|\mathcal{N}_k|(L_k\beta R^\alpha)^2}}{\sqrt{\log (n_k)}} \nonumber \\
&< \frac{2\varepsilon_k}{3+1/\sqrt{C}} .
\end{align}
We set $$g_k:=\frac{1}{2|A|}\sum_{l\in A}f_l^{n_k}-\frac{1}{2|B|}\sum_{l\in B}f_l^{n_k}\qquad \textup{and}\qquad \sigma_k:=\mu_{\max A}^{n_k},$$ so that $g_k\in\mathcal{A}$ and $\sigma_k\in\mathfrak{M}\subset\free{K}$. 
It is easily checked that properties \ref{biorthogonal}--\ref{small_evaluations_mu} are satisfied. Indeed, \ref{biorthogonal} is implied by \ref{evaluations}, while \ref{small_evaluations_sigma} follows from assumption \ref{small_evaluations_mu} in the inductive hypothesis as $\sigma_k=\mu_j^n\in\mathfrak{M}$ for some $n>n_{k-1}$ and $1\leq j\leq n$. Property \ref{small_evaluations_mu} can be verified the same way as for $g_1$ in the initial step.

Last, set $h_k:=\varphi_kg_k$ where $\varphi_k$ was defined in \eqref{eq:definition_phi_k}. Note that $h_k\in  C^{\infty}(\Omega,\enorm{\cdot}{d})\cap\Lip_0(K)$ as both functions are smooth on $\Omega$, Lipschitz (and hence bounded) when restricted to $K$, and $g_k(x_0)=0$. Let us prove \ref{richness}. Since
$$X(h_k)=X(\varphi_k)g_k+\varphi_kX(g_k),$$
for every $x\in \Gamma_K$, we have 
\begin{align}
\label{computation_richness}
\nonumber
\Bigg\|\prod_{i=1}^{k-1}\left(1-\enorm{X(g_i)(x)}{s}\right)&X(g_k)(x)-X(h_k)(x)\Bigg\|_{\ell_2^{s}}\\\nonumber
&\leq\frac{\varepsilon_k}{3}\enorm{X(g_k)(x)}{s}+\enorm{X(\varphi_k)(x)g_k(x)}{s}\\\nonumber
&\leq\frac{\varepsilon_k}{3}\enorm{X(g_k)(x)}{s}+L_k|g_k(x)|\\
&\leq\frac{\varepsilon_k}{3}+L_k|g_k(x)| .
\end{align}
Here, the first two inequalities follow from the properties of function $\varphi_k$, while the third one comes from \ref{norm_of_X(f)}. Now choose $y\in\mathcal{N}_k$ such that $\enorm{x-y}{d}\leq\eta$, then
\begin{align*}
L_k\abs{g_k(x)} &\leq L_k\abs{g_k(y)} + L_k\abs{g_k(x)-g_k(y)} \\
&\leq \frac{\varepsilon_k}{3+1/\sqrt{C}} + L_k\beta\enorm{x-y}{d}^\alpha \\
&\leq \frac{\varepsilon_k}{3} + L_k\beta\eta^\alpha < \frac{2\varepsilon_k}{3}
\end{align*}
where the second inequality is inferred from \ref{Holderness_of_f} and the part of estimate \eqref{Uk} involving vectors $v_l\in\ell_2^{|\mathcal{N}_k|}$. Combining these estimates yields \ref{richness}, and thus concludes the inductive construction.

\medskip
\textbf{Construction of the WUC series.}
Finally, we claim that the series $\sum_{n=1}^\infty h_n\restrict_K$ obtained by \ref{richness} is a WUC series in $\Lip_0(K)$ witnessing that $\mathcal{K}$ is not a (V$^*$)-set. For better clarity, we will relax the notation and write $h_n$ instead of $h_n\restrict_K$ also for the functions in $\Lip_0(K)$; the domain should always be clear from the context.
Observe that by \ref{norm_of_X(f)}, for any $x\in \Gamma_K$ we have
\begin{align*}
    \sum_{n=1}^{\infty}&\enorm{\prod_{j=1}^{n-1}\left(1-\enorm{X(g_j)(x)}{s}\right)X(g_n)(x)}{s}=\sum_{n=1}^{\infty}\prod\limits_{j=1}^{n-1}\left(1-\enorm{X(g_j)(x)}{s}\right)\enorm{X(g_n)(x)}{s}\\
&=\enorm{X(g_1)(x)}{s}+\sum_{n=2}^{\infty}\Biggl(\prod\limits_{j=1}^{n-1}\left(1-\enorm{X(g_j)(x)}{s}\right)-\prod\limits_{j=1}^{n}\left(1-\enorm{X(g_j)(x)}{s}\right)\Biggr)\leq 1,
\end{align*}
where we use the convention that the empty product is 1. Thus, by \ref{richness} and \eqref{eq:sum epsilon}, 
\begin{equation}
    \label{sum_of_X(h)}
\sum_{n=1}^\infty \enorm{X(h_n)(x)}{s}\leq 1+\sum_{n=1}^\infty \varepsilon_n<1+\frac{\xi}{6C}.
\end{equation}
Because $h_n\in C^\infty(\Omega,\enorm{\cdot}{d})$, the function $\enorm{X(h_n)}{s}\colon\Omega\to\RR$ is an upper gradient of $h_n$ by Proposition \ref{prop:upper_gradient}. So, we can estimate the action of any elementary molecule supported in $K$ on the function $h_n$ by the integral of $\enorm{X(h_n)}{s}$ along any geodesic curve joining the points of the molecule as follows. Let $p\neq q\in K$ and $\gamma\in\Gamma_{pq}$, then by \eqref{sum_of_X(h)}
\begin{align*}
\sum_{n=1}^\infty \abs{\duality{h_n,m_{pq}}}&\leq \sum_{n=1}^\infty \frac{1}{d_{cc}(p,q)}\int_0^{d_{cc}(p,q)}\enorm{X(h_n)(\gamma(t))}{s}dt\\
&= \frac{1}{d_{cc}(p,q)}\int_0^{d_{cc}(p,q)}\sum_{n=1}^\infty\enorm{X(h_n)(\gamma(t))}{s}dt\\
&< 1+\frac{\xi}{6C}.
\end{align*}
Note that, importantly, the bound does not depend on $p,q\in K$. In particular, 
\begin{equation}
    \label{eq:norm_of_h_n}
    \Lip(h_n)\leq 1+\frac{\xi}{6C} .
\end{equation}
Since for every element $m\in\free{K}$ there exists a representation $m=\sum_{i=1}^{\infty}a_im_{p_iq_i}$ with $a_i\geq 0$, $p_i\neq q_i\in M$ such that $\sum_{i=1}^\infty a_i\leq2\norm{m}$, as in \eqref{eq:sum of molecules}, by simply changing the order of summation we infer that
$$\sum_{n=1}^\infty\abs{\duality{h_n,m}}\leq 2\left(1+\frac{\xi}{6C}\right)\norm{m} .$$
Thus $\sum_{n=1}^\infty h_n$ is weakly unconditionally Cauchy against elements of the predual $\lipfree{K}$, and thus WUC by Fact \ref{fact:wuc}\ref{w*uc}.

It remains to be shown that $\inf_n\sup_{\kappa\in\mathcal{K}}\abs{\duality{h_n,\kappa}}>0$. Let $n\in\N$. For any $x\in \Gamma_K$, we have
\begin{align}
\label{eq:X(h-g)}\nonumber
\enorm{X(h_n-g_n)(x)}{s}&\leq \enorm{X(h_n)(x)-\prod_{j=1}^{n-1}\left(1-\enorm{X(g_j)(x)}{s}\right)X(g_n)(x)}{s}\\\nonumber
&\phantom{\leq}+ \enorm{\prod_{j=1}^{n-1}\left(1-\enorm{X(g_j)(x)}{s}\right)X(g_n)(x)-X(g_n)(x)}{s}\\\nonumber
&\leq \varepsilon_n+\left(1-\prod_{j=1}^{n-1}\left(1-\enorm{X(g_j)(x)}{s}\right)\right)\enorm{X(g_n)(x)}{s}\\
&\leq \varepsilon_n+\sum_{j=1}^{n-1}\enorm{X(g_j)(x)}{s},
\end{align}
where the second inequality follows from \ref{richness} and the third one from \ref{norm_of_X(f)} combined with the general inequality $1-\prod_{j=1}^n(1-c_j)\leq \sum_{j=1}^n c_j$ that holds true for any $(c_j)_{j=1}^n\subset [0,1]$. Now, because both $h_n,g_n\in C^\infty(\Omega,\enorm{\cdot}{d})$, the function $\enorm{X(h_n-g_n)}{s}$ is an upper gradient of $h_n-g_n$ by Proposition \ref{prop:upper_gradient} again, and we have 
\begin{align*}
\abs{\duality{h_n-g_n,\sigma_n}}&\leq \sum_{i=1}^I\frac{a_i}{d_i}\int_0^{d_i}\enorm{X(h_n-g_n)(\gamma_i(t))}{s}dt\\
&\leq \sum_{i=1}^I\frac{a_i}{d_i}\int_0^{d_i}\left(\varepsilon_n+\sum_{j=1}^{n-1}\enorm{X(g_j)(\gamma_i(t))}{s}\right)dt\\
&=\varepsilon_n\sum_{i=1}^I a_i +\sum_{j=1}^{n-1}\sum_{i=1}^I\frac{a_i}{d_i}\int_0^{d_i}\enorm{X(g_j)(\gamma_i(t))}{s}dt\\
&\leq C\varepsilon_n+ C\sum_{j=1}^{n-1}\varepsilon_j=C\sum_{j=1}^{n}\varepsilon_j<\frac{\xi}{6},
\end{align*}
according to the above estimate \eqref{eq:X(h-g)}, the bound of the norm of $\sigma_n$, \ref{small_evaluations_sigma}, and \eqref{eq:sum epsilon}; here, $I, a_i, d_i, \gamma_i$ are from the fixed representation \eqref{eq:representation of mu} of $\sigma_n$.
So, applying \ref{biorthogonal}, we obtain
\begin{align*}
\duality{h_n,\sigma_n}&\geq \duality{g_n,\sigma_n}-\abs{\duality{h_n-g_n,\sigma_n}}>\frac{\xi}{6}.
\end{align*}
To conclude, by \ref{approximation} and \eqref{eq:norm_of_h_n}, we may find a $\kappa\in\mathcal{K}$ so that
$$\duality{h_n,\kappa} \geq \duality{h_n,\sigma_n} - \Lip(h_n)\norm{\sigma_n-\kappa} > \frac{\xi}{12} .$$ The proof of the theorem is thus complete.
\end{proof}

\section*{Acknowledgements}

The authors would like to thank Michal Doucha for answering their questions about Carnot groups, Chris Gartland for insightful discussions on Carnot-Carath\'eodory metric spaces and for informing them of the failure of condition \ref{wuc extension} in Proposition \ref{pr:sufficient conditions} for $\ell_1$, and Anton\'in Proch\'azka for providing Corollary \ref{cr:compact determination subspace}.

Some of this research was carried out during visits of the first author to the Faculty of Information Technology at the Czech Technical University in Prague in 2022 and 2024.


R. J. Aliaga was partially supported by Grant PID2021-122126NB-C33 funded by MICIU/AEI/10.13039/501100011033 and by ERDF/EU. E. Perneck\'a was supported by the grant GA\v CR 22-32829S of the Czech Science Foundation. A. Quero was supported by Grant PID2021-122126NB-C31 funded by MICIU/AEI/10.13039\allowbreak/501100011033 and ERDF/EU, and the CTU Global Postdoc Fellowship program.



\end{document}